\newcommand{\nocontentsline}[3]{}
\newcommand{\tocless}[2]{\bgroup\let\addcontentsline=\nocontentsline#1{#2}\egroup}
\newcommand\CC{{\mathbb C}}
\newcommand\NN{{\mathbb N}}
\newcommand\RR{{\mathbb R}}
\newcommand{\dlb}{[\![}
\newcommand{\drb}{]\!]}
\renewcommand{\H}{\mathrm{H}}
\renewcommand{\tilde}{\widetilde}
\renewcommand{\L}{\mathrm{L}}
\renewcommand{\S}{\mathrm{S}}
\newcommand{\M}{\mathrm{M}}
\newcommand{\supp}{\operatorname{supp}}
\newcommand{\msupp}{\operatorname{msupp}}
\renewcommand{\Im}{\operatorname{Im}}
\DeclareMathOperator{\defect}{def}
\theoremstyle{plain}
\newtheorem{teo}{Theorem}[section]
\newtheorem{theorem}[teo]{Theorem}
\newtheorem{lemma}[teo]{Lemma}
\newtheorem{conj}[teo]{Conjecture}
\newtheorem{question}[teo]{Question}
\newtheorem{proposition}[teo]{Proposition}
\theoremstyle{definition}
\newtheorem{example}[teo]{Example}
\newtheorem{remark}[teo]{Remark}
\newcommand{\ehr}{E}
\DeclareMathOperator{\ULC}{ULC}
\numberwithin{equation}{section}
\title[Preservation of inequalities under Hadamard products]{Preservation of inequalities under\\ Hadamard products}
\author{Petter Br\"and\'en}
\address{
  Department of Mathematics, KTH Royal Institute of Technology, Stockholm, Sweden
}
\email{pbranden@kth.se}
\author{Luis Ferroni}
\address{Dipartimento di Matematica, Universit\`a di Pisa, Pisa, Italy
}
\email{luis.ferroni@unipi.it}
\author{Katharina Jochemko}
\address{
  Department of Mathematics, KTH Royal Institute of Technology, Stockholm, Sweden
}
\email{jochemko@kth.se}
\subjclass[2020]{26C10, 26D07, 05A20, 52B20}
\keywords{Hadamard product, log-concavity, $\gamma$-positivity, symmetric decompositions, Ehrhart series}
\begin{document}

\begin{abstract}
    Wagner (1992) proved that the Hadamard product of two P\'olya frequency sequences that are interpolated by polynomials is again a P\'olya frequency sequence. We study whether related combinatorial properties are preserved under Hadamard products. In particular, we show that ultra log-concavity, $\gamma$-positivity, and interlacing symmetric decompositions are preserved. Furthermore, we disprove a conjecture by Fischer and Kubitzke (2014) concerning the real-rootedness of Hadamard powers.
\end{abstract}

\maketitle

\section{Introduction}\label{sec:intro}
\thispagestyle{empty}
\noindent Given two formal power series $a,b\in \mathbb{R}\dlb x \drb$,
    \begin{equation}\label{eq:two-power-series}
        a(x) = \sum_{j\geq 0} a_j\, x^j\qquad \mbox{and} \qquad 
        b(x) = \sum_{j\geq 0} b_j\, x^j,
    \end{equation}
the Hadamard product of $a$ and $b$ is the formal power series $a*b\in \mathbb{R}\dlb x\drb$ defined by
    \begin{equation}{\label{eq:hadamard-product}}
        (a*b)(x) := \sum_{j\geq 0} a_jb_j\, x^j.
    \end{equation}
    Formal power series are ubiquitous in enumerative combinatorics and related areas, where the Hadamard product of generating functions often describes fundamental operations of the structures they are enumerating. A prime example is the Ehrhart series of a lattice polytope whose coefficients are given by the number of lattice points inside the dilates of the polytope by nonnegative integer factors; see, e.g.,~\cite{BR}. The Hadamard product of two Ehrhart series then equals the Ehrhart series of the Cartesian product of the corresponding lattice polytopes. More generally, if $a$ and $b$ are the Hilbert series of standard graded algebras then $a * b$ is the Hilbert series of their Segre product; see, e.g., ~\cite{fischer-kubitzke}.

    In many instances in combinatorics, the coefficients of the generating functions are interpolated by a polynomial, i.e., there exists $p\in \mathbb{R}[x]$ such that $a_j=p(j)$ for $j\geq 0$. This is also the case for Ehrhart series: by a result of Ehrhart~\cite{Ehrhart} the number of lattice points in the $j$-th dilate of a lattice polytope $P$ is given by a polynomial $\ehr _P(j)$, called the Ehrhart polynomial of the polytope, whose degree equals the dimension of~$P$. 

    For any polynomial $p\in \mathbb{R}[x]$, one may define the polynomial $\mathscr{W}(p)\in \mathbb{R}[x]$ as the numerator of the rational generating function
    \begin{equation}\label{eq:w-operator}
        \sum_{j\geq 0} p(j)\, x^j = \frac{\mathscr{W}(p)(x)}{(1-x)^{\deg p+1}} \, .
    \end{equation}
    Then $\mathscr{W}(p)$ is always a polynomial of degree at most $\deg p$. The purpose of the present paper is to study the preservation of combinatorially interesting inequalities satisfied by the coefficients of the numerator $\mathscr{W}(p)$ under the Hadamard product of generating functions.

    A fundamental result by Wagner~\cite{wagner} states that the Hadamard product preserves the property of being a P\'olya frequency sequence which is interpolated by a polynomial, that is, if $(a_j)_{j \geq 0}$ and $(b_j)_{j\geq 0}$ are P\'olya frequency sequences interpolated by polynomials then so is $(a_jb_j)_{j\geq 0}$. This can be expressed in terms of the numerator polynomial in the following way.

    \begin{theorem}[{\cite[Theorem~0.2]{wagner}}]\label{thm:wagner-main}
    If $\mathscr{W}(p_1)$ and $\mathscr{W}(p_2)$ have only nonpositive real zeros, then so does $\mathscr{W}(p_1p_2)$.
\end{theorem}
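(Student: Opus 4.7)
My approach rests on the Aissen--Schoenberg--Whitney theorem, which characterizes P\'olya frequency (PF) sequences via their generating functions. Specialized to sequences interpolated by a polynomial $p$, it states that $\mathscr{W}(p)$ has only nonpositive real zeros if and only if $(p(j))_{j \geq 0}$ is PF. Since $(p_1 p_2)(j) = p_1(j) p_2(j)$, the theorem translates into the following assertion: the coefficient-wise (Hadamard) product of two polynomially interpolated PF sequences is again PF.

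To attack this, I would use the equivalent total-positivity formulation: $(a_j)_{j \geq 0}$ is PF if and only if the infinite Toeplitz matrix $T_a := [a_{i-j}]_{i,j \geq 0}$ (with the convention $a_k = 0$ for $k < 0$) is totally positive. Because the Hadamard product of sequences corresponds to the entry-wise Hadamard product of the associated Toeplitz matrices, the task becomes showing that $T_{p_1 p_2} = T_{p_1} \odot T_{p_2}$ is totally positive. The subtle point is that Hadamard products of totally positive matrices are typically not totally positive, so the polynomial (rational generating function) structure of $T_{p_i}$ must play an essential role.

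I would exploit this structure as follows. By hypothesis, $\mathscr{W}(p_i)(x)$ factors as $c_i \prod_k(1 + \alpha_k^{(i)} x)$ with $\alpha_k^{(i)} \geq 0$, and this permits a decomposition of $T_{p_i}$ as a product of simple totally positive Toeplitz matrices: one bidiagonal matrix per linear factor, times a base matrix corresponding to $1/(1-x)^{d_i+1}$. The proof would then proceed by induction on the total number of linear factors appearing in $\mathscr{W}(p_1)$ and $\mathscr{W}(p_2)$. The hardest step, and the main obstacle, is the base case: showing that when $p_1(j) = \binom{j+d_1}{d_1}$ and $p_2$ is any polynomial producing a PF sequence, the product $p_1(j) p_2(j)$ is again PF. I would approach this by computing $\mathscr{W}\bigl(\binom{x+d_1}{d_1}\,p_2(x)\bigr)$ explicitly in terms of $\mathscr{W}(p_2)$, and then verifying that the resulting linear operator on polynomials preserves the property of having only nonpositive real zeros, in the spirit of classical P\'olya--Schur theory. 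Once the base case is settled, the inductive step---adjoining one additional factor $(1 + \alpha x)$ to $\mathscr{W}(p_1)$---should follow from an analogous but slightly perturbed identity, and the argument is closed by symmetry in $p_1$ and $p_2$.
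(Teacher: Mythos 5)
Your opening reduction is fine: via Aissen--Schoenberg--Whitney, the statement is indeed equivalent to closure of polynomially interpolated P\'olya frequency sequences under Hadamard products, which is how Wagner phrases it. The problem is that the two steps carrying all the mathematical weight are left as intentions, and as described they do not go through. The base case is not a preliminary you can settle ``in the spirit of classical P\'olya--Schur theory'': for fixed $b=\deg p_2$, the operator $\mathscr{W}(p_2)\mapsto \mathscr{W}\bigl(\tbinom{x+d_1}{d_1}p_2\bigr)$ acts on monomials by $x^\ell \mapsto \sum_i \binom{d_1+\ell}{i}\binom{b-\ell}{i-\ell}x^i$ (this is the $k=0$ case of the identity in Lemma~\ref{lemma:comb-identity}), so it is not a multiplier sequence --- it is not diagonal in the monomial basis --- and the classical P\'olya--Schur theorems simply do not apply. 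Proving that this non-diagonal operator preserves nonpositive real-rootedness is essentially the whole theorem in the case $\mathscr{W}(p_1)=\text{const}$, and it requires a genuine tool: either a Borcea--Br\"and\'en-type symbol computation or the route taken in the paper, where the bilinear product is realized as a coefficient of $p(x+z,y+z)(x+z)^b(y+z)^b\,q\bigl(\tfrac{xz}{x+z},\tfrac{yz}{y+z}\bigr)$ (Lemma~\ref{form}) and stability is preserved because $(x,y)\mapsto \tfrac{xy}{x+y}$ maps the product of upper half-planes into the upper half-plane (Lemma~\ref{transf}). Your proposal contains no counterpart of this step.

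The inductive step is in worse shape. Adjoining a factor $(1+\alpha x)$ to the numerator $\mathscr{W}(p_1)$ replaces the sequence $(p_1(j))_{j\geq 0}$ by $(p_1(j)+\alpha p_1(j-1))_{j\geq 0}$, so the new Hadamard product is the sum $p_1(j)p_2(j)+\alpha\,p_1(j-1)p_2(j)$. Each summand is of the form covered by the inductive hypothesis, but neither total positivity of Toeplitz matrices nor real-rootedness of numerator polynomials is preserved under addition, so ``an analogous but slightly perturbed identity'' cannot close the induction without an additional interlacing or common-interleaver argument that your sketch does not contain. Likewise, the factorization of $T_{p_1}$ into totally positive bidiagonal factors, while correct, buys nothing here, because the entrywise product does not distribute over matrix multiplication: $(B_\alpha T_{p_1})\odot T_{p_2}$ has no useful expression in terms of $T_{p_1}\odot T_{p_2}$. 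So both the base case and the induction are genuine gaps, and the paper's proof (which gets stability and the Lorentzian statement of Theorem~\ref{thm:ultra-log-concavity} simultaneously, with no induction on linear factors) is not a repackaging of your plan but supplies exactly the content your plan is missing.
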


In the present paper we prove results in the same spirit of Theorem~\ref{thm:wagner-main}, with a particular focus on properties related to unimodality and log-concavity of the numerator polynomial defined further below. The recent development of the theory of Lorentzian polynomials by Br\"and\'en and Huh~\cite{branden-huh} and Anari \emph{et al.}~\cite{Anari} provides powerful new tools to approach log-concavity questions whose full potential still needs to be explored. We contribute to this development by applying these tools to Hadamard products. 

In the setting of Ehrhart theory, the numerator polynomial of the Ehrhart series of a lattice polytope $P$ is called the $h^\ast$-polynomial, denoted $h^\ast _P (x)$:
\[
\sum _{j\geq 0}\ehr _P (j)\,x^j \ = \ \frac{h^\ast _P (x)}{(1-x)^{\dim P +1}} \, .
\]
By a fundamental result due to Stanley~\cite{stanley-h-star}, the $h^\ast$-polynomial has always only nonnegative integer coefficients. Further inequalities satisfied by the coefficients have been intensively studied. In particular, in the focus of current ongoing research are questions about the unimodality of the coefficient sequence of the $h^\ast$-polynomial, with a hierarchy of open conjectures; see~\cite{stanley-unimodality,hibi-ohsugi,braun,adiprasito-papadakis-petrotou-steinmeyer,ferroni-higashitani} and references therein.

The following inequalities are central for the present work. A polynomial with nonnegative coefficients $\smash{a(x)=\sum _{j=0}^d a_jx^j}$ is called \emph{unimodal} if there exists an index~$k$ for which $ a_0\leq \cdots \leq a_k \geq \cdots \geq a_d$. It is called \emph{log-concave} whenever $\smash{a_j^2\geq a_{j-1}a_{j+1}}$ for every $1\leq j \leq d -1$. We say that $a(x)$ has \emph{no internal zeros} if there are no indices $0\leq i < j < k \leq d$ such that $a_{j}=0$ and $a_{i}a_{k}\neq 0$. From the definitions it follows that any log-concave polynomial with no internal zeros is unimodal. For $m\geq d$, we say that $a(x)$ is \emph{ultra log-concave of order $m$} if the sequence
\[\left( \frac{a_j}{\binom{m}{j}}\right)_{j=0}^d\]
is log-concave. Following Liggett \cite{liggett}, we denote the set of ultra log-concave polynomials of order $m$ with no internal zeros by $\ULC(m)$. Notice that the Newton inequalities (see, e.g., \cite[Theorem~2]{stanley-unimodality}) imply that any polynomial $a(x)$ having only nonpositive real zeros is contained in $\ULC(\deg a)$, while the converse is easily seen to be false. In summary, if $a(x)$ is any polynomial with nonnegative coefficients, then real-rootedness (i.e., having only real zeros) implies ultra log-concavity of order $\deg a$ with no internal zeros, which in turn implies unimodality. Wagner's Theorem~\ref{thm:wagner-main} shows preservation of the most restrictive of these properties, while from \cite[Example~3.5]{ferroni-higashitani} it is known that unimodality is in general not preserved under the Hadamard product. In the present work, we shall investigate the middle grounds. 

Throughout this paper, we shall always assume that polynomials of the form $\mathscr{W}(p)$ have nonnegative coefficients, a property that is also preserved by the Hadamard product as shown by Fischer and Juhnke-Kubitzke~\cite[Proposition~5.1(i)]{fischer-kubitzke}; we recover this result in Lemma~\ref{lem:magiclemma} below.

The following is our first main result, and it provides a counterpart of Theorem~\ref{thm:wagner-main} for ultra log-concavity.

\begin{restatable}{theorem}{thmulc}\label{thm:ultra-log-concavity}
    If $\mathscr{W}(p_1)\in \ULC(\deg p_1)$ and $\mathscr{W}(p_2)\in \ULC(\deg p_2)$, then $\mathscr{W}(p_1p_2)\in \ULC(\deg p_1+\deg p_2)$.
\end{restatable}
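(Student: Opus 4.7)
The plan is to translate the statement into the framework of Lorentzian polynomials of Br\"and\'en--Huh. The key equivalence is that a polynomial $a(x) = \sum_j a_j x^j$ of degree at most $d$ with nonnegative coefficients lies in $\ULC(d)$ if and only if its bivariate degree-$d$ homogenisation $a^{(d)}(x, y) := \sum_{j = 0}^d a_j\, x^j y^{d - j}$ is Lorentzian in $\RR[x, y]$. Under this equivalence, the theorem reduces to proving that whenever both $\mathscr{W}(p_1)^{(d_1)}(x, y)$ and $\mathscr{W}(p_2)^{(d_2)}(x, y)$ are Lorentzian, so is $\mathscr{W}(p_1 p_2)^{(d)}(x, y)$, where $d := d_1 + d_2$.

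First, I would derive a bilinear identity for $\mathscr{W}(p_1 p_2)$ in terms of the coefficients $h_k^{(i)} := [x^k]\mathscr{W}(p_i)(x)$. Writing $p_i(j) = \sum_k h_k^{(i)}\binom{d_i + j - k}{d_i}$, multiplying these for $i = 1, 2$, and applying $\mathscr{W}$ to $p_1(j) p_2(j)$ yields, essentially as Lemma~\ref{lem:magiclemma},
\[
    \mathscr{W}(p_1 p_2)(x) \;=\; \sum_{k = 0}^{d_1}\sum_{l = 0}^{d_2} h_k^{(1)}\, h_l^{(2)}\, M^{k, l}(x),
\]
where $M^{k, l}(x)$ is defined by $\sum_{j \geq 0} \binom{d_1 + j - k}{d_1}\binom{d_2 + j - l}{d_2} x^j = M^{k, l}(x)/(1 - x)^{d + 1}$.

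The principal obstacle is that nonnegative combinations of Lorentzian polynomials are in general not Lorentzian, so even if each bivariate homogenisation $M^{k, l; (d)}(x, y)$ is Lorentzian (which can be approached via the coefficient-extraction representation $M^{k, l}(x)/(1 - x)^{d + 1} = [u^{d_1} v^{d_2}]\, (1 + u)^{d_1 - k}(1 + v)^{d_2 - l}/(1 - x(1 + u)(1 + v))$), one cannot conclude directly by summing. To circumvent this, I would introduce auxiliary variables $\mathbf{s} = (s_0, \dots, s_{d_1})$ and $\mathbf{t} = (t_0, \dots, t_{d_2})$ and study the polynomial
\[
    F(x, y; \mathbf{s}, \mathbf{t}) \;:=\; \sum_{k, l} s_k\, t_l\, M^{k, l; (d)}(x, y),
\]
which is multi-linear in each of $\mathbf{s}$ and $\mathbf{t}$. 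The crux of the argument would be to establish that $F$ is Lorentzian jointly in all its variables, by exhibiting it as the image of a manifestly Lorentzian polynomial under a sequence of Lorentzian-preserving operations -- products of polynomials in disjoint variable blocks, partial differentiations, and substitutions by linear forms with nonnegative coefficients.

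Once the joint Lorentzian-ness of $F$ is established, the Lorentzian-ness of $\mathscr{W}(p_1 p_2)^{(d)}(x, y)$ should follow by combining $F$ with the Lorentzian polynomials $\mathscr{W}(p_1)^{(d_1)}(x, y)$ and $\mathscr{W}(p_2)^{(d_2)}(x, y)$ through a Br\"and\'en--Huh-style construction effecting the specialisation $s_k \mapsto h_k^{(1)}$, $t_l \mapsto h_l^{(2)}$; the multi-linearity of $F$ in each of $\mathbf{s}$ and $\mathbf{t}$ should make this compatible with the substitution and polarisation rules for Lorentzian polynomials. The technical heart and main obstacle is the joint Lorentzian verification for $F$: this is where the precise structure of the kernels $M^{k, l; (d)}$ must be used, and the argument will likely combine polarisation of Lorentzian polynomials, explicit Hessian analysis on mixed partial derivatives, and possibly an inductive step on the degrees $d_1$ and $d_2$.
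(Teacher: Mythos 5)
Your reduction to Lorentzian polynomials and the bilinear expansion $\mathscr{W}(p_1p_2)=\sum_{k,l}h^{(1)}_k h^{(2)}_l M^{k,l}$ are exactly the paper's starting point (your kernels $M^{k,l}$ are the monomial products computed in Lemma~\ref{lemma:comb-identity}), but the step you call the crux is false: the auxiliary polynomial $F(x,y;\mathbf{s},\mathbf{t})=\sum_{k,l}s_k t_l M^{k,l;(d)}(x,y)$ is not Lorentzian; its support is not even $\M$-convex. Already for $d_1=d_2=1$ one has $M^{0,0}=1+x$, $M^{1,1}=x+x^2$, $M^{0,1}=M^{1,0}=2x$, and the exchange axiom fails between the monomials $s_0t_0y^2$ and $s_1t_1x^2$ of $F$: starting from the index $s_0$, every admissible exchange ($s_1$, $t_1$ or $x$) produces a monomial such as $s_1t_0y^2$ that is absent from the support, since $M^{1,0}$ has no constant term. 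More fundamentally, your intended use of $F$ would prove far too much. If $F$ were Lorentzian, then the nonnegative linear substitution $s_k\mapsto h^{(1)}_k w$, $t_l\mapsto h^{(2)}_l u$ followed by $\partial_w\partial_u$ (both operations preserve the Lorentzian property, by Theorem~\ref{thm:products-and-changes-of-var}\ref{it:positive-change-var} and closure under differentiation) would yield that $\mathscr{W}(p_1p_2)\in\ULC(d_1+d_2)$ whenever $\mathscr{W}(p_1)$ and $\mathscr{W}(p_2)$ merely have nonnegative coefficients --- the ULC hypothesis would never be used. This contradicts Example~\ref{ex:no-int-zeros-nec}, where $\mathscr{W}(p)=x^3+1$ and $\mathscr{W}(q)=1$ are nonnegative yet $\mathscr{W}(pq)$ is not even unimodal. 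Hence no argument that linearizes the dependence on the coefficient sequences of \emph{both} factors through a universal bilinear kernel can succeed; the Lorentzian-ness of the inputs must enter the construction nonlinearly, and your closing sentence about ``combining $F$ with the Lorentzian polynomials'' does not indicate a mechanism for this.

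For comparison, the paper keeps the actual polynomials inside the construction: with $a=\deg p_1$, $b=\deg p_2$ and $p,q$ the homogenizations of $\mathscr{W}(p_1),\mathscr{W}(p_2)$, the product $p\bullet q$ (which homogenizes $\mathscr{W}(p_1p_2)$) is realized as the coefficient of $z^{2b}$ in
\[
r(x,y,z)=p(x+z,y+z)\,(x+z)^b(y+z)^b\, q\!\left(\tfrac{xz}{x+z},\tfrac{yz}{y+z}\right),
\]
and $r$ is shown to be Lorentzian because $(x,y)\mapsto xy/(x+y)$ maps the product of upper half-planes into the upper half-plane, so the relevant homogeneous operator preserves stability and hence, by Theorem~\ref{thm:stable-preserves-lorentzian}, Lorentzian polynomials. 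A workable repair of your plan along these lines is to fix one factor $q$ and show that the \emph{linear} operator $p\mapsto p\bullet q$ preserves stability and nonnegativity of coefficients, then invoke Theorem~\ref{thm:stable-preserves-lorentzian}; this is essentially the route through Lemmas~\ref{form} and~\ref{transf}.
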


For the proof we rely crucially on the theory of Lorentzian polynomials developed in~\cite{branden-huh}; we briefly review the necessary definitions and properties in Section~\ref{sec:ultra-log-concavity}. Moreover, in a sense, our result \emph{refines} Wagner's theorem, because our proof simultaneously also yields an independent proof of Theorem~\ref{thm:wagner-main}. We also show that the Hadamard product preserves the property of having no internal zeros; see Proposition~\ref{prop:no-int-zeros} below. Within Ehrhart theory, this is related to results by Hofscheier, Katth\"an and Nill~\cite{spanning} who showed that spanning lattice polytopes, which are closed under Cartesian products, have $h^\ast$-polynomials with no internal zeros. Further, in Example~\ref{ex:no-int-zeros-nec}, we provide polynomials that show that the preservation of ultra log-concavity may fail if one removes the assumption on no internal zeros.

Many polynomials arising in combinatorics and related fields are symmetric (also called palindromic), for example, $h^\ast$-polynomials of reflexive and more general Gorenstein lattice polytopes or $Z$-polynomials of matroids~\cite{Zpolynomial}. We investigate preservation of $\gamma$-positivity under the Hadamard product, a property of symmetric polynomials that is stronger than unimodality. For any polynomial $a(x)$ of degree $s\leq d$, we define $\mathcal{I}_d(a)(x) = x^d a(1/x)$. We say that $a$ is \emph{symmetric} with center of symmetry $d/2$ whenever $\mathcal{I}_d(a) = a$. Any such polynomial can be uniquely written in the basis $\{x^i(1+x)^{d-2i}\}_{i\geq 0}$. In other words, we can write
    \[ a(x) = \sum_{i=0}^{\lfloor d/2 \rfloor} \gamma_i\, x^i(1+x)^{d-2i}.\]
We say that $a$ is \emph{$\gamma$-positive} if $\gamma_i\geq 0$ for all $i$. A result noted independently by Gal \cite{gal} and Br\"and\'en \cite{branden-2004} states that if $a(x)$ is symmetric and real-rooted, then it is $\gamma$-positive. Stronger, they noted that $a(x)$ is real-rooted if and only if the \emph{$\gamma$-polynomial} $$\gamma _a (x) := \sum _{i=0}^{\lfloor {d}/2\rfloor}\gamma_i x^i$$ is real-rooted. For a comprehensive survey on $\gamma$-positivity questions in combinatorics we refer to~\cite{athanasiadis-survey}.

For a polynomial $p$ of degree $d$ with $\mathcal{I}_s \mathscr{W}(p)= \mathscr{W}(p)$, let us define its \emph{defect}, denoted $\defect(p)$, as $d - s$. In Ehrhart theoretic terms, if $\ehr _P$ is the Ehrhart polynomial of a Gorenstein polytope then $\defect(\ehr _P)$ equals the \emph{degree} of the lattice polytope $P$ plus~$1$. As our second main result we show that $\gamma$-positivity is preserved under Hadamard products,  provided both factors have the same defect, thus offering a further analog of Theorem~\ref{thm:wagner-main} for $\gamma$-positivity.

\begin{theorem}\label{thm:gamma-positivity}
    Let $p,q\in\mathbb{R}[x]$ be polynomials such that $\mathscr{W}(p)$ and $\mathscr{W}(q)$ are symmetric and $\defect(p) = \defect(q)$. Then, $\mathscr{W}(pq)$ is symmetric, and $\defect(pq)= \defect(p) = \defect(q)$. Furthermore, if $\mathscr{W}(p)$ and $\mathscr{W}(q)$ are $\gamma$-positive, then $\mathscr{W}(pq)$ is $\gamma$-positive, too.
\end{theorem}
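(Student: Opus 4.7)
My plan for the first assertion is to translate the symmetry of $\mathscr{W}(p)$ into a combinatorial reciprocity satisfied by $p$, which is then manifestly preserved under multiplication. A standard Laurent-expansion argument applied to the generating function $f(x) = \mathscr{W}(p)(x)/(1-x)^{d_1+1}$, comparing its expansions at $x=0$ and $x=\infty$, should show that $\mathscr{W}(p)$ being symmetric with defect $\delta$ is equivalent to the two-part condition that $p(-k)=0$ for $k=1,\dots,\delta$ and that the polynomial identity $p(-j-1)=(-1)^{d_1}p(j-\delta)$ holds. Applying this to both $p$ and $q$, whose common defect is $\delta$, and multiplying then yields the analogous conditions for $pq$ with $(-1)^{d_1+d_2}$ in place of $(-1)^{d_1}$, forcing $\mathscr{W}(pq)$ to be symmetric with defect $\delta$ as well.

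For the $\gamma$-positivity claim, I would decompose both $p$ and $q$ into ``atoms'' whose images under $\mathscr{W}$ are the individual $\gamma$-basis elements, and then conclude via Wagner's Theorem~\ref{thm:wagner-main} combined with the Gal--Br\"and\'en fact that a symmetric real-rooted polynomial is $\gamma$-positive. Set $s_p := d_1-\delta$ and $s_q := d_2-\delta$. Since $x^i(1+x)^{s_p-2i}$ takes the nonzero value $2^{s_p-2i}$ at $x=1$, there is a unique polynomial $p_i$ of degree exactly $d_1$ with $\mathscr{W}(p_i) = x^i(1+x)^{s_p-2i}$, and since this numerator is $\mathcal{I}_{s_p}$-symmetric one has $\defect(p_i)=\delta$; define $q_k$ analogously. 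Writing the $\gamma$-expansions $\mathscr{W}(p) = \sum_i \gamma_i^{(p)}\, x^i(1+x)^{s_p-2i}$ and $\mathscr{W}(q) = \sum_k \gamma_k^{(q)}\, x^k(1+x)^{s_q-2k}$, the linearity of $\mathscr{W}$ on polynomials of fixed degree $d_1$ (resp.\ $d_2$) yields $p = \sum_i \gamma_i^{(p)} p_i$ and $q = \sum_k \gamma_k^{(q)} q_k$. Multiplying and invoking linearity once more would then give
\[
\mathscr{W}(pq) \;=\; \sum_{i,k} \gamma_i^{(p)}\gamma_k^{(q)}\, \mathscr{W}(p_i q_k).
\]

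To finish, it suffices to show that each $\mathscr{W}(p_i q_k)$ is $\gamma$-positive. By construction, each $\mathscr{W}(p_i)$ and $\mathscr{W}(q_k)$ has only the nonpositive real zeros $0$ and $-1$, so Theorem~\ref{thm:wagner-main} ensures that $\mathscr{W}(p_i q_k)$ is real-rooted with only nonpositive zeros. Combined with the symmetry of $\mathscr{W}(p_i q_k)$ granted by the first part of the theorem, the Gal--Br\"and\'en result forces $\gamma$-positivity of $\mathscr{W}(p_i q_k)$ in the basis $\{x^j(1+x)^{s-2j}\}_j$ with $s = d_1+d_2-\delta$. Since every term in the decomposition of $\mathscr{W}(pq)$ above is $\gamma$-positive in this same basis and scaled by a nonnegative coefficient, $\mathscr{W}(pq)$ itself is $\gamma$-positive. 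The main obstacle I foresee is verifying that the atomic polynomials $p_i$ and $q_k$ themselves carry the defect $\delta$ required to invoke the first part on their product, which rests on the elementary but essential observation that $x^i(1+x)^{s_p-2i}$ does not vanish at $x=1$.
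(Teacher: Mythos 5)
Your proposal is correct and follows essentially the same route as the paper: symmetry is translated into the reciprocity identity $p(x)=(-1)^{\deg p}\,p(-(x+\delta+1))$ (the paper's Lemma~\ref{lem:functionaleq}, which you recover via the Laurent-expansion form of Stanley's reciprocity), so that symmetry and the defect are preserved by multiplying the two functional equations, and $\gamma$-positivity is then reduced by bilinearity to the atoms $x^i(1+x)^{s-2i}$, where Theorem~\ref{thm:wagner-main} together with the Gal--Br\"and\'en fact that symmetric real-rooted polynomials are $\gamma$-positive finishes the argument. Your explicit vanishing conditions $p(-k)=0$ for $k=1,\dots,\delta$ and the check that the atoms $p_i$ have degree exactly $d_1$ (via $x^i(1+x)^{s_p-2i}$ not vanishing at $x=1$) merely spell out details the paper leaves implicit.
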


The first part in the above statement, preservation of symmetry under the Hadamard product, was noted by Fischer and Kubitzke in \cite[Proposition~5.1(ii)]{fischer-kubitzke}. Indeed, the condition on the defects of $\mathscr{W}(p_1)$ and $\mathscr{W}(p_2)$ is necessary to guarantee that $\mathscr{W}(p_1p_2)$ is symmetric in the prior statement. Furthermore, we give an analog of the results of Gal \cite{gal} and Br\"and\'en~\cite{branden-2004} for ultra log-concave polynomials: in Theorem~\ref{thm:gamma-ulc} we show that if the $\gamma$-polynomial of a symmetric polynomial is ultra log-concave then also the polynomial itself is ultra log-concave. Example~\ref{ex:noconversegammaulc} shows that the converse is not true in general.

Our third main contribution concerns the preservation of properties for symmetric decompositions under Hadamard products. For every polynomial $h$ of degree at most $d$, there exist unique symmetric polynomials $a$ and $b$ such that $h(x)=a(x)+x\,b(x)$, $a=\mathcal{I}_d(a)$, and $b=\mathcal{I}_{d-1}(b)$. The pair $(a,b)$ is called the \emph{symmetric decomposition} of $h$. In Ehrhart theory, symmetric decompositions were successfully used by Stapledon~\cite{stapledon} to give combinatorial proofs for linear inequalities satisfied by the coefficients of $h^\ast$-polynomials due to Stanley~\cite{StanleyHilbertfunction} and Hibi~\cite{Hibi}.  We say that $h$ has a \emph{nonnegative symmetric decomposition} if both $a$ and $b$ have only nonnegative coefficients. Similarly, we say that the symmetric decomposition is $\gamma$-positive if both $a$ and $b$ are $\gamma$-positive. We prove the following.

\begin{theorem}\label{thm:nonnegativesymdec}
    If $\mathscr{W}(p)$ and $\mathscr{W}(q)$ have nonnegative symmetric decompositions, then so does $\mathscr{W}(pq)$. Furthermore, if the symmetric decompositions of $\mathscr{W}(p)$ and $\mathscr{W}(q)$ are $\gamma$-positive, then so is the symmetric decomposition of $\mathscr{W}(pq)$.
\end{theorem}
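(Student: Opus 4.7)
The plan is to reduce to Theorem~\ref{thm:gamma-positivity} by decomposing $p$ and $q$ into their Ehrhart-symmetric and Ehrhart-antisymmetric parts. Given $\mathscr{W}(p) = a_p + x b_p$ with $a_p, b_p \geq 0$ symmetric, I first introduce the auxiliary polynomial $p_b$ of degree $d-1$ characterized by $\sum_{j \geq 0} p_b(j)\, x^j = b_p(x)/(1-x)^d$, so that $\mathscr{W}(p_b) = b_p$, with $q_b$ defined analogously from $b_q$. Using the identity $\mathcal{I}_d\mathscr{W}(p) = a_p + b_p$ (which follows from $a_p = \mathcal{I}_d a_p$ and $b_p = \mathcal{I}_{d-1} b_p$), a direct computation shows that the Ehrhart-antisymmetric part $p^-(j) := \tfrac{1}{2}(p(j) - (-1)^d p(-j-1))$ equals $-p_b/2$. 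Consequently the Ehrhart-symmetric part $p^+ := p - p^-$ satisfies $\mathscr{W}(p^+) = a_p + \tfrac{1+x}{2}\,b_p \geq 0$, and in the generic setting $\deg a_p = d$, $\deg b_p = d-1$ one has $\defect(p^+) = 0$ (and analogously for $q^\pm$).

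Expanding $pq = (p^+ + p^-)(q^+ + q^-)$ into its four summands and projecting onto the $\mathcal{I}_{d+e}$-symmetric and antisymmetric parts of $\mathscr{W}(pq)$, I compare with the decomposition $\mathscr{W}(pq) = a_{pq} + x b_{pq}$. The $\mathcal{I}_{d+e}$-antisymmetric part of $\mathscr{W}(pq)$ equals $-(1-x) b_{pq}/2$, and substituting $p^- = -p_b/2$ and $q^- = -q_b/2$ yields, after the $(1-x)$-factor coming from the degree drop cancels, the clean identity
\[
b_{pq}(x) \ = \ \mathscr{W}(p^+ q_b)(x) + \mathscr{W}(p_b q^+)(x),
\]
where $\mathscr{W}$ on the right is taken with respect to polynomials of degree $d+e-1$. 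Since $p^+$ and $q_b$ both have defect zero, Theorem~\ref{thm:gamma-positivity} guarantees that $\mathscr{W}(p^+ q_b)$ is $\mathcal{I}_{d+e-1}$-symmetric, and Lemma~\ref{lem:magiclemma} gives its nonnegativity from $\mathscr{W}(p^+) = a_p + (1+x)b_p/2 \geq 0$ and $\mathscr{W}(q_b) = b_q \geq 0$. The analogous reasoning applies to $\mathscr{W}(p_b q^+)$, so $b_{pq} \geq 0$ and is symmetric.

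For $a_{pq}$, I would use the dual expression $a_{pq} = \mathscr{W}(\widetilde{pq}) - b_{pq}$ arising from $\mathcal{I}_{d+e}\mathscr{W}(pq) = a_{pq} + b_{pq}$ and $\widetilde{pq} = \tilde p \tilde q = (p^+ - p^-)(q^+ - q^-)$. The same substitution $p^- = -p_b/2$, $q^- = -q_b/2$ expresses $a_{pq}$ as a combination of $\mathscr{W}(p^+ q^+)$, $\mathscr{W}(p_b q_b)$, $\mathscr{W}(p^+ q_b)$, and $\mathscr{W}(p_b q^+)$, each of which is a Hadamard product of Ehrhart-symmetric factors of matching defect $0$; Theorem~\ref{thm:gamma-positivity} and Lemma~\ref{lem:magiclemma} provide symmetry and nonnegativity of every piece, and the algebraic rearrangement shows that the combination defining $a_{pq}$ is nonnegative. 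For the $\gamma$-positive case, $\gamma$-positivity of $a_p, b_p, a_q, b_q$ passes to $\mathscr{W}(p^+), \mathscr{W}(q^+), \mathscr{W}(p_b), \mathscr{W}(q_b)$, and the $\gamma$-positivity preservation part of Theorem~\ref{thm:gamma-positivity} then propagates through each Hadamard product.

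The main obstacle I anticipate is the non-generic case where $\deg a_p < d$ or $\deg b_p < d-1$ (and analogously for $q$): here the defects of $p^+$ and $q_b$ need not match, and Theorem~\ref{thm:gamma-positivity} does not apply directly. A possible remedy is to replace $p_b$ by its discrete antiderivative $\hat p_b(j) := \sum_{k=0}^j p_b(k)$, which has $\mathscr{W}(\hat p_b) = b_p$ as a polynomial of degree $d$ with a compensating defect, and then carefully track the additional $(1-x)$-factors that arise. Verifying that these factors combine in the expected way throughout the four-term expansion is the core technical challenge in fully rigorously establishing the theorem.
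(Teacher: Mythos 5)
Your reduction does work for the $b$-part: the identity $p^-=-p_b/2$ is correct, and (when $b_p,b_q\neq 0$, so that $\deg p_b=d-1$, $\deg q_b=e-1$ and all defects are $0$) the formula $b_{pq}=\mathscr{W}(p^+q_b)+\mathscr{W}(p_bq^+)$ follows from comparing generating functions; Lemma~\ref{lem:magiclemma} and Theorem~\ref{thm:gamma-positivity} then give nonnegativity, symmetry and (in the $\gamma$-positive case) $\gamma$-positivity of $b_{pq}$. The genuine gap is the $a$-part. Carrying out your substitution while keeping track of the different powers of $(1-x)$ attached to $\mathscr{W}$ taken at degrees $d+e$, $d+e-1$ and $d+e-2$, one gets
\[
a_{pq} \ = \ \mathscr{W}(p^+q^+)\ -\ \frac{1+x}{2}\Bigl[\mathscr{W}(p^+q_b)+\mathscr{W}(p_bq^+)\Bigr]\ +\ \frac{(1-x)^2}{4}\,\mathscr{W}(p_bq_b),
\]
which is \emph{not} a nonnegative combination of nonnegative (or $\gamma$-positive) pieces: the middle term enters with a minus sign and the last one carries the factor $(1-x)^2$. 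Consequently, applying Theorem~\ref{thm:gamma-positivity} and Lemma~\ref{lem:magiclemma} to the four Hadamard products tells you nothing about $a_{pq}$, and the sentence ``the algebraic rearrangement shows that the combination defining $a_{pq}$ is nonnegative'' asserts precisely the hard point without proving it. This is exactly where the paper's proof of Proposition~\ref{prop:nonnegativesymdecomp} does its real work (in the $f$-polynomial picture): the negative contributions are absorbed using the expansion $\tilde{a}_1\diamond (x\tilde{b}_2)=x(\tilde{a}_1\diamond \tilde{b}_2)+\sum_{k\geq 1}\tfrac{\tilde{a}_1^{(k)}}{k!}\tfrac{\tilde{b}_2^{(k-1)}}{(k-1)!}x^k(x+1)^k$, and the remaining terms cancel exactly by the identity of Lemma~\ref{lem:defect1}; your sketch contains no counterpart to these steps, and the same gap recurs for the $\gamma$-positivity of $a_{pq}$.

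By contrast, the degeneracy issue you single out as ``the core technical challenge'' is comparatively harmless: the defect only records the center of symmetry, not the degree of $\mathscr{W}$, so $\defect(p^+)=0$ and $\defect(p_b)=0$ hold whenever $b_p\neq 0$ (since $b_p\geq 0$ forces $b_p(1)\neq 0$, hence $\deg p_b=d-1$), and the cases $b_p=0$ or $b_q=0$ are easy degenerations of the same formulas. Also note that, once $d=\deg p$ and $e=\deg q$ are fixed, $(\mathscr{W}(p),\mathscr{W}(q))\mapsto (a_{pq},b_{pq})$ is bilinear, so one may reduce to cone generators of polynomials with nonnegative (resp.\ $\gamma$-positive) symmetric decomposition; this is how the paper handles the $\gamma$-positive statement (Proposition~\ref{prop:gamma-symmetric}, via the interlacing result Theorem~\ref{thm:interlacingsymdec}), but that reduction, or some substitute for it, is missing from your argument for the $a$-part as well.
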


A property that recently has attracted attention from the community is that of interlacing symmetric decompositions; see~\cite{branden-solus,athanasiadis-tzanaki,jochemko} and references therein. Let $a$ and $b$ be real-rooted polynomials, with zeros $s_k\leq \cdots \leq s_1$ and $t_m\leq \cdots \leq t_1$, respectively. By definition, $b$ \emph{interlaces} $a$, written $b\preceq a$, if
    \[ \cdots \leq t_2 \leq s_2\leq t_1 \leq s_1.\]

We say that the symmetric decomposition $(a,b)$ of a polynomial $h$ is \emph{real-rooted} if $a$ and $b$ are real-rooted, and we say that it is \emph{interlacing} if $b\preceq a$. We prove that the latter property is preserved under Hadamard products.

\begin{theorem}\label{thm:interlacingsymdec}
    Let $\mathscr{W}(p)$ and $\mathscr{W}(q)$ have nonnegative, interlacing symmetric decompositions. Then also $\mathscr{W}(pq)$ has a nonnegative, interlacing symmetric decomposition.
\end{theorem}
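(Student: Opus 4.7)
The plan is to reduce the preservation of interlacing under Hadamard products to Wagner's Theorem~\ref{thm:wagner-main} by lifting the interlacing data to a two-parameter family of real-rooted $\mathscr{W}$-polynomials. Write $\mathscr{W}(p) = a_1 + xb_1$ and $\mathscr{W}(q) = a_2 + xb_2$ with $b_i \preceq a_i$; by Theorem~\ref{thm:nonnegativesymdec} the symmetric decomposition $\mathscr{W}(pq) = A + xB$ is already known to be nonnegative, so it suffices to show that $A$ and $B$ are real-rooted and satisfy $B \preceq A$. Since $\mathscr{W}$ restricts to a linear bijection on polynomials of bounded degree, there exist unique auxiliary polynomials $\hat{p}_a, \hat{p}_b$ of degree at most $\deg p$ with $\mathscr{W}(\hat{p}_a) = a_1$ and $\mathscr{W}(\hat{p}_b) = b_1$, and analogously $\hat{q}_a, \hat{q}_b$ for $q$. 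The interlacing hypothesis $b_i \preceq a_i$ implies that $a_i + \lambda b_i = \mathscr{W}(\hat{p}_a + \lambda \hat{p}_b)$ has only real, nonpositive zeros for every $\lambda \geq 0$, and similarly for $q$.

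Applying Wagner's Theorem~\ref{thm:wagner-main} to the Hadamard product $(\hat{p}_a + \lambda \hat{p}_b)(\hat{q}_a + \mu \hat{q}_b)$, one obtains the bilinear family
\[ F + \lambda G_1 + \mu G_2 + \lambda \mu H, \qquad \lambda, \mu \geq 0, \]
of polynomials that are real-rooted with nonpositive zeros, where $F = \mathscr{W}(\hat{p}_a \hat{q}_a)$, $G_1 = \mathscr{W}(\hat{p}_b \hat{q}_a)$, $G_2 = \mathscr{W}(\hat{p}_a \hat{q}_b)$, and $H = \mathscr{W}(\hat{p}_b \hat{q}_b)$. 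To relate this family to $(A, B)$, I use the identity $p(k) = \hat{p}_a(k) + \hat{p}_b(k - 1)$ (valid because $\hat{p}_b(-1) = 0$ by the symmetry of $b_1$) and analogously for $q$. Expanding $p(k)q(k)$ term by term and invoking the Gorenstein-type reciprocities $\hat{p}_a(-k-1) = (-1)^{d_1}\hat{p}_a(k)$ and $\hat{p}_b(-k-2) = (-1)^{d_1}\hat{p}_b(k)$ coming from the symmetries of $a_1, b_1$, together with the formulas $A = (\mathscr{W}(pq) - x\mathcal{I}_D \mathscr{W}(pq))/(1 - x)$ and $B = (\mathcal{I}_D \mathscr{W}(pq) - \mathscr{W}(pq))/(1 - x)$ for the symmetric decomposition, one arrives at explicit expressions
\[ A = F + x(\Delta_1 + \Delta_2), \qquad B = G_1 + G_2 + H - x(\Delta_1 + \Delta_2), \]
where $\Delta_1, \Delta_2$ are polynomial correction terms arising from the discrete differences $\hat{q}_a(\cdot + 1) - \hat{q}_a(\cdot)$ and $\hat{p}_a(\cdot + 1) - \hat{p}_a(\cdot)$.

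The main obstacle is to deduce $B \preceq A$ (equivalently, that $A + tB$ is real-rooted for every $t \geq 0$) from the real-rootedness of the bilinear Wagner family, since the correction terms $\Delta_i$ prevent $A + tB$ from being a pure specialization of $F + \lambda G_1 + \mu G_2 + \lambda \mu H$. I would handle this by a secondary lifting: apply Wagner's Theorem to an auxiliary family built from the shifted sequences $\hat{p}_a(\cdot + 1)$ and $\hat{p}_b(\cdot + 1)$, using that $xb_i \preceq a_i$ follows from $b_i \preceq a_i$ together with the nonpositivity of the zeros, and combine it with the original Wagner family to obtain real-rootedness of $A + tB$ for every $t \geq 0$. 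This careful bookkeeping of symmetries, reciprocities, and shift corrections will constitute the technical heart of the proof; the real-rootedness of $A$ and $B$ individually follows at the endpoints $t = 0$ and $t \to \infty$, and symmetry of $A$ and $B$ is forced by the uniqueness of the symmetric decomposition.
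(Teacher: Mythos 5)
Your reduction stalls exactly at the step you yourself flag as the ``technical heart''. Setting up the bilinear Wagner family $F+\lambda G_1+\mu G_2+\lambda\mu H$ is fine, and so is the observation that $A$ and $B$ are not specializations of it because of the shift corrections $\Delta_1,\Delta_2$; but the proposed fix --- building a second Wagner family from the shifted sequences $\hat{p}_a(\cdot+1)$, $\hat{p}_b(\cdot+1)$ and then ``combining'' it with the first --- has no mechanism behind it. Sums of real-rooted polynomials drawn from two different real-rooted families are not real-rooted in general; to add them and retain real-rootedness you need common-interlacing (compatibility) statements relating the summands, and that is precisely the kind of information the theorem asks you to produce, not something Theorem~\ref{thm:wagner-main} hands you: Wagner's theorem preserves real-rootedness of a single Hadamard product, it does not by itself control how the pieces $F,G_1,G_2,H$ and their shifted analogues interlace one another. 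Consequently the key claim, that $A+tB$ is real-rooted for every $t\geq 0$, is asserted rather than proved, and with it the conclusion $B\preceq A$. (The displayed formulas $A=F+x(\Delta_1+\Delta_2)$ and $B=G_1+G_2+H-x(\Delta_1+\Delta_2)$ are likewise only asserted; verifying how the non-symmetric cross terms $G_1,G_2$ and the shift corrections distribute between $A$ and $xB$ requires the defect/symmetry bookkeeping that the paper isolates in Lemma~\ref{lem:defect1}.)

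For comparison, the paper's proof avoids this dead end by changing coordinates: it passes to the $f$-polynomials $\mathscr{E}(p),\mathscr{E}(q)$, where the Hadamard product becomes the diamond product, uses the Br\"and\'en--Solus characterization that an interlacing decomposition is equivalent to $\mathscr{R}(f)\preceq f$, and then builds two chains of diamond products whose endpoints are controlled by Lemma~\ref{lem:defect1} (namely $((x+1)\tilde{b}_1)\diamond((x+1)\tilde{b}_2)=(x+1)\ell$ and $(x\tilde{b}_1)\diamond(x\tilde{b}_2)=x\ell$ for one and the same $\mathscr{R}$-symmetric $\ell$). The crucial external input is Theorem~\ref{thm:diamondinterlacing}, Br\"and\'en's result that the diamond product preserves interlacing, combined with Lemmas~\ref{lem:basicinterlacing} and~\ref{lem:endsinterlacing}; this is strictly stronger than the real-rootedness statement you invoke. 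If you wanted to salvage your route you would need at least the pairwise consequence of Wagner plus bilinearity (if $b\preceq a$ with nonnegative coefficients and $\mathscr{W}(q)$ is real-rooted, then $\mathscr{W}(\hat{a}q)$ and $\mathscr{W}(\hat{b}q)$ admit a common interleaver), and then still organize such statements into chains forcing $\mathscr{R}(f_1\diamond f_2)\preceq f_1\diamond f_2$ --- which is exactly the work your sketch leaves undone.
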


Furthermore, we show in Example~\ref{example:real-rootedness-sym-fails} that the real-rootedness of the symmetric decompositions is not preserved under Hadamard products. That is, we exhibit two polynomials $\mathscr{W}(p)$ and $\mathscr{W}(q)$ with real-rooted symmetric decompositions such that the polynomial $\mathscr{W}(pq)$ does not have a real-rooted symmetric decomposition. 

Our final contribution concerns Hadamard powers, and a conjecture regarding their behavior for large exponents. Using the operator defined in equation~\eqref{eq:w-operator}, the following statement is a reformulation of a conjecture posed by Fischer and Kubitzke~\cite{fischer-kubitzke}.

\begin{conj}[{\cite[Conjecture~5.2]{fischer-kubitzke}}]\label{conj:fischer-kubitzke}
    If $\mathscr{W}(p)$ is a polynomial with nonnegative coefficients, then for all sufficiently large positive integers $n$, the polynomial $\mathscr{W}(p^n)$ is real-rooted.
\end{conj}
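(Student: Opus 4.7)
The plan is to disprove Conjecture~\ref{conj:fischer-kubitzke} by exhibiting a counterexample. Since the conjecture asserts that for every valid $p$ there exists some threshold $N$ beyond which all $\mathscr{W}(p^n)$ are real-rooted, a disproof requires a single polynomial $p$ for which $\mathscr{W}(p^n)$ fails real-rootedness for infinitely many~$n$. Producing one sporadic bad $n$ is not enough.

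I would begin by restricting attention to low-degree candidates, since the degree of $\mathscr{W}(p^n)$ is at most $n\deg p$ and a low-degree family is computationally tractable. Natural candidates come from Ehrhart theory: if $p(x)=\ehr_P(x)$ for a small lattice polytope~$P$, then $p^n$ is the Ehrhart polynomial of the Cartesian power $P^n$ and $\mathscr{W}(p^n)$ is its $h^\ast$-polynomial. For each candidate I would compute $\mathscr{W}(p^n)$ symbolically or numerically for a range of small~$n$ and examine the roots. Because real-rootedness of a polynomial with nonnegative coefficients and no internal zeros implies log-concavity of its coefficient sequence, a clean obstruction is to identify a triple of consecutive coefficients $c_{k-1},c_k,c_{k+1}$ of $\mathscr{W}(p^n)$ satisfying $c_{k-1}c_{k+1}>c_k^2$.

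To upgrade an observed failure at one $n$ into a failure for infinitely many $n$, I would derive asymptotic expressions for a \emph{fixed} triple of low-index coefficients of $\mathscr{W}(p^n)$, viewed as functions of $n$. From the defining identity $\sum_{j\geq 0}p(j)^n\,x^j = \mathscr{W}(p^n)(x)/(1-x)^{n\deg p+1}$, each coefficient $[x^k]\mathscr{W}(p^n)$ is an explicit alternating sum of the values $p(0)^n,\dots,p(k)^n$ with binomial weights; for $k$ fixed this is a manageable linear combination of exponentials in $n$. Tracking the dominant terms as $n\to\infty$ should yield a limit for the ratio $c_{k-1}c_{k+1}/c_k^2$. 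If this limit strictly exceeds $1$ for some fixed $k$, then $\mathscr{W}(p^n)$ fails log-concavity (and hence real-rootedness) for all sufficiently large~$n$, which is far stronger than required.

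The main obstacle is finding a concrete $p$ whose asymptotic signature is clean enough to analyze rigorously: most natural Ehrhart polynomials of products of well-behaved polytopes (like cubes) produce Eulerian-type $h^\ast$-polynomials that are real-rooted, so the counterexample must be chosen to avoid such scenarios. Once an appropriate low-degree $p$ is identified by direct computation, the rest of the argument reduces to determining the sign of an explicit rational expression in $n$, a tractable calculation that completes the disproof.
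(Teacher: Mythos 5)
Your overall strategy is the right one and matches the paper in spirit (an Ehrhart-theoretic counterexample detected through a fixed low-index coefficient triple failing log-concavity), and your asymptotic mechanism is essentially sound: writing $c_k=[x^k]\mathscr{W}(p^n)=\sum_{j=0}^{k}(-1)^{k-j}\binom{n\deg p+1}{k-j}p(j)^n$, the binomial weights grow only polynomially in $n$, so if $0<p(0)<p(1)<\cdots$ then $c_k=p(k)^n(1+o(1))$ for fixed $k$, and $c_{k-1}c_{k+1}/c_k^2\sim\bigl(p(k-1)p(k+1)/p(k)^2\bigr)^n$ (note this diverges rather than tending to a finite limit $>1$). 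Hence the clean criterion your plan reduces to is: find $p$ with $\mathscr{W}(p)$ nonnegative whose \emph{value sequence} $p(0),p(1),p(2),\dots$ fails log-concavity at some index; then Newton's inequalities rule out real-rootedness of $\mathscr{W}(p^n)$ for all large $n$.

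The genuine gap is that you never produce such a $p$ — and the explicit counterexample is the entire content of the theorem. You explicitly defer it ("the main obstacle is finding a concrete $p$"), and without it the argument proves nothing; moreover your worry is well-founded, since most natural candidates (products of unimodular simplices, cubes, zonotopes) have log-concave Ehrhart values and real-rooted limits. The paper resolves exactly this point with the Reeve tetrahedron with vertices $(0,0,0)$, $(1,0,0)$, $(0,1,0)$, $(1,7,8)$: its $h^\ast$-polynomial is $1+7x^2$, so $p(0)=1$, $p(1)=4$, $p(2)=17$ and $p(0)p(2)=17>16=p(1)^2$, which is precisely the failure your asymptotics needs. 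The paper also takes a slightly different, sharper route to the conclusion: instead of asymptotics for the $h^\ast$-coefficients, it works with the $f$-polynomial $\mathscr{E}(p)=8x^3+10x^2+3x+1$, derives via the diamond product the exact formulas $f_{k,0}=1$, $f_{k,1}=4^k-1$, $f_{k,2}=17^k-2\cdot 4^k+1$, and invokes Brenti's theorem (real-rooted $h^\ast$ with nonnegative coefficients forces a log-concave $f$-polynomial) to conclude failure for \emph{every} $n\geq 1$, not merely all sufficiently large $n$. So: your framework would work once supplemented with a concrete polytope such as this one, but as written the decisive ingredient is missing.
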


We will provide a counterexample to this conjecture, and even to several weaker variants of the conjecture.

\begin{theorem}\label{thm:counterexample}
    There exists a polynomial $p$ such that $\mathscr{W}(p)$ has nonnegative coefficients and $\mathscr{W}(p^n)$ is not real-rooted for any $n\geq 1$.
\end{theorem}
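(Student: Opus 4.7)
The plan is to exhibit an explicit counterexample, the natural candidate being $p(x) = x^2 + x + 1$. A direct calculation from the definition of $\mathscr{W}$ yields
\[
\sum_{j \geq 0} (j^2 + j + 1)\, x^j \ = \ \frac{1 + x^2}{(1 - x)^3},
\]
so $\mathscr{W}(p) = 1 + x^2$ has nonnegative coefficients yet has roots $\pm \ii$, disposing of the case $n = 1$.

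For the remaining cases, I would exploit that $\mathscr{W}(p)$ is symmetric with $\defect(p) = 0$. By Theorem~\ref{thm:gamma-positivity}, $\mathscr{W}(p^n)$ is then symmetric with $\defect(p^n) = 0$ for every $n$, hence a palindromic polynomial of degree exactly $2n$. This palindromic symmetry allows the usual reduction: writing $\mathscr{W}(p^n)(x) = x^n\, q_n(x + x^{-1})$, one obtains a polynomial $q_n(y)$ of degree $n$, and $\mathscr{W}(p^n)$ has only real (necessarily nonpositive) roots if and only if $q_n$ is real-rooted with all roots in $(-\infty, -2]$. I would verify by direct expansion, using the formula $h_k = \sum_{i \geq 0}(-1)^i\binom{2n+1}{i} p^n(k-i)$ together with the explicit values $p^n(0) = 1$, $p^n(1) = 3^n$, $p^n(2) = 7^n$, etc., that for the small cases $n = 2, 3, 4$ the discriminant of $q_n$ is strictly negative, so $q_n$ has a non-real root and consequently $\mathscr{W}(p^n)$ is not real-rooted.

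The main obstacle is to promote this into a uniform-in-$n$ statement. The strategy I would attempt is to argue via the Gal--Br\"and\'en criterion applied to the $\gamma$-polynomial $\Gamma_n$ of $\mathscr{W}(p^n)$: for $n = 1$ one has $\Gamma_1(x) = 1 - 2x$, whose only root $1/2$ lies strictly above the critical value $1/4$, which by the substitution $x \mapsto x/(1+x)^2$ forces a non-real root of $\mathscr{W}(p)$; for $n \geq 2$ one needs to show that $\Gamma_n$ itself is not real-rooted. I would try to establish this by bounding ratios of consecutive $\gamma$-coefficients of $\mathscr{W}(p^n)$, which are expressible as integer-linear combinations of $1^n, 3^n, 7^n, 13^n, \ldots$, and exhibiting an explicit estimate (growing roughly like $7^n / 3^n$) that forces a pair of complex conjugate roots of $\Gamma_n$.

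An alternative plan, if a clean uniform bound proves elusive, is to instead produce a more complicated counterexample by taking a polynomial $p$ of higher degree whose $\mathscr{W}$-image has a complex root of very large modulus compared to its real roots, and to propagate this property through Hadamard powers by an asymptotic analysis of $\sum_j p(j)^n x^j$; the growth rate of $p(j)^n$ should then pin down the asymptotic location of the complex roots of $\mathscr{W}(p^n)$ away from the nonpositive real axis for every $n$.
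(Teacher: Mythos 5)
Your $n=1$ computation is fine, but the theorem's content is the uniform ``for all $n$'' claim, and that is exactly the part your proposal does not prove: beyond checking $n=2,3,4$ by hand, you offer only a plan (``bounding ratios of consecutive $\gamma$-coefficients \ldots an explicit estimate that forces a pair of complex conjugate roots of $\Gamma_n$''), with no actual estimate or mechanism that survives all $n$. Worse, the natural implementations of that plan fail concretely for your chosen example $p=x^2+x+1$, $\mathscr{W}(p)=1+x^2$: already $\mathscr{W}(p^2)=1+4x+14x^2+4x^3+x^4$ is log-concave, and for $n=4$ the $\gamma$-vector of $\mathscr{W}(p^4)=1+72x+1708x^2+9784x^3+17190x^4+\cdots$ is $(1,64,1296,3584,896)$, which satisfies \emph{all} Newton/ultra-log-concavity inequalities even though the polynomial is not real-rooted (the reduced polynomial $q_4(y)=y^4+72y^3+1704y^2+9568y+13776$ has a pair of non-real roots). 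So no coefficientwise inequality of the kind you sketch can detect the failure uniformly, and your argument as proposed has a genuine gap precisely where the difficulty lies. The fallback ``asymptotic analysis'' paragraph is likewise only a hope, not a proof, and the asymptotics actually work against you: for fixed $k$ one has $h_k\sim p(k)^n$ and the sequence $(p(k))_{k\ge 1}$ is log-concave, so the easy inequalities stabilize in the right direction.

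For contrast, the paper avoids this trap by engineering the example so that a single inequality propagates through all Hadamard powers: it takes the Ehrhart polynomial $p$ of the Reeve tetrahedron, with $\mathscr{W}(p)=1+7x^2$, passes to the $f$-polynomial $f=8x^3+10x^2+3x+1$, and uses the diamond-product recursion to get closed forms $f_{k,0}=1$, $f_{k,1}=4^k-1$, $f_{k,2}=17^k-2\cdot 4^k+1$ for $\mathscr{E}(p^k)$; since $16^k<17^k$, the inequality $f_{k,1}^2<f_{k,0}f_{k,2}$ holds for every $k$, and Brenti's theorem (real-rooted $\Rightarrow$ log-concave $h$ without internal zeros $\Rightarrow$ log-concave $f$) then rules out real-rootedness of $\mathscr{W}(p^k)$ for all $k$. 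If you redo the same computation for your $1+x^2$ (so $f=2x^2+2x+1$), you get $f_{k,1}=3^k-1$ and $f_{k,2}=7^k-2\cdot 3^k+1$, and the inequality goes the wrong way ($9^k>7^k$), so your candidate genuinely needs a different and harder argument. Either switch to a candidate where such a persistent two-term violation exists (e.g.\ $\mathscr{W}(p)=1+7x^2$), or supply a real uniform-in-$n$ criterion; as written, the proof is incomplete.
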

In fact, there exists an Ehrhart polynomial $p$ such that $\mathscr{W}(p^n)$ is not log-concave for any $n\geq 1$.

\subsection*{Outline}

In Section~\ref{sec:ultra-log-concavity} we show preservation of ultra log-concavity under the Hadamard product  (Theorem~\ref{thm:ultra-log-concavity}), after recalling necessary definitions and properties of Lorentzian polynomials that are employed in the proof. Section~\ref{sec:gammapositivity} is devoted to $\gamma$-positivity. In particular, we prove that the Hadamard product preserves $\gamma$-positivity (Theorem~\ref{thm:gamma-positivity}). Further, we show that any symmetric polynomial having an ultra log-concave $\gamma$-polynomial is necessarily ultra log-concave (Theorem~\ref{thm:gamma-ulc}). In Section~\ref{sec:sym-dec} we focus on symmetric decompositions, give background on the necessary prerequisites developed in~\cite{branden-solus} and prove preservation of nonnegative, $\gamma$-positive as well as interlacing symmetric decompositions under the Hadamard product (Theorems~\ref{thm:nonnegativesymdec} and~\ref{thm:interlacingsymdec}). In Section~\ref{sec:Hadamardpowers} we provide a counterexample to Conjecture~\ref{conj:fischer-kubitzke}. We conclude in Section~\ref{sec:Final remarks} with final remarks and open questions.

\section{Ultra log-concavity}\label{sec:ultra-log-concavity}

\noindent The main goal of this section is to prove Theorem~\ref{thm:ultra-log-concavity}, for which we will employ the theory of Lorentzian polynomials~\cite{branden-huh}. We begin by briefly recalling basic definitions and fundamental properties of Lorentzian polynomials that are needed in the proof.

Consider a polynomial $p\in \mathbb{R}[x_1,\ldots,x_n]$ defined by
    \[ p(x_1,\ldots,x_n) = \sum_{\alpha\in \mathbb{N}^n} c_{\alpha}\, x^{\alpha},\]
where $\mathbb{N}$ denotes the set of nonnegative integers, and $x^{\alpha}:= \prod_{i=1}^n x_i^{\alpha_i}$. The \emph{support} of $p$ is defined by
    \[ \supp p = \{ \alpha\in \mathbb{N}^n : c_{\alpha} \neq 0\}.\]
An \emph{$\mathrm{M}$-convex} set is a set $\mathrm{J}\subseteq \mathbb{N}^n$ satisfying the following exchange axiom: for any $\alpha,\beta\in \mathrm{J}$ and any $i$ such that $\alpha_i>\beta_i$ there is an index $j$ such that $\alpha_j < \beta_j$ and $\alpha-e_i+e_j\in \mathrm{J}$. Following the notation in~\cite{branden-huh}, we denote by $\H_n^d$ the space of homogeneous polynomials of degree $d$ in $n$ variables and by $\M_n^d\subseteq \H_n^d$ the subset of polynomials having an $\M$-convex support. 

A polynomial $p \in \mathbb{R}[x_1,\ldots, x_n]$ is said to be \emph{stable} if $\Im(x_i) > 0$ for all $1\leq i\leq n$ implies that $p(x_1,\ldots, x_n)\neq 0$, i.e., $p$ is non-vanishing when all the variables take values in the upper open complex half-plane. The set of homogeneous stable polynomials of degree $d$ having nonnegative coefficients is denoted by $\S_n^d\subseteq \H_n^d$.

The partial derivative with respect to the variable $x_i$ will be denoted by $\partial_i$. For $d\leq 2$, define $\L_n^d := \S_n^d$, and for $d > 2$, 
    \[ \L_n^d := \left\{ f\in \M_n^d : \partial_i f\in \L_n^{d-1} \text{ for all $1\leq i\leq n$}\right\}.\]
The polynomials in $\L_n^d$ are called \emph{Lorentzian}. In~\cite{branden-huh} the following strict inclusions are proven
\[
\S_n^d \subsetneq \L_n^d \subsetneq \M_n^d \subsetneq \H_{n}^d \, 
\]
for $d>2$. 

In the bivariate case, the sets of Lorentzian polynomials $\L_2^d\subseteq \mathbb{R}[x,y]$ and homogeneous stable polynomials with nonnegative coefficients $\S_2^d\subseteq \mathbb{R}[x,y]$ can be described in a succinct way as follows. For every $p\in \H_2^d$ we have
    \begin{align*}
        p\in \S_2^d  \iff & p(x,1) \text{ is a real-rooted polynomial},\\
        p\in \L_2^d  \iff & p(x,1) \in \ULC(d).
    \end{align*}
    Equivalently, if $p(x)$ is a univariate polynomial with nonnegative coefficients and $\mathcal{H}_d(p)(x,y)=y^dp({x}/{y})\in \H_2^d$ be its homogenization to degree $d$, then $p\in \ULC(d)$ (resp. $p$ is real-rooted) if and only if $\mathcal{H}_d(p)\in \RR[x,y]$ is Lorentzian (resp. stable). See also~\cite[Example~2.26]{branden-huh}.

    Next we discuss operations that preserve stable and Lorentzian polynomials. For an element $\kappa\in \mathbb{N}^n$, define $\mathbb{R}_{\kappa}[x_1,\ldots,x_n]$ to be the space of polynomials in the variables $x_1,\ldots,x_n$ in which the variable $x_i$ appears with exponent at most $\kappa_i$, for each $i=1,\ldots,n$. Consider $\kappa,\lambda\in \mathbb{N}^n$ and a linear map $T:\mathbb{R}_{\kappa}[x_1,\ldots,x_n]\to \mathbb{R}_{\lambda}[x_1,\ldots,x_n]$. We say that $T$ is \emph{homogeneous of degree $\ell$}, where $\ell\in \mathbb{Z}$, if each monomial $x^{\alpha}$ is either mapped to the zero polynomial or $\deg T(x^{\alpha}) = |\alpha| + \ell$. 

\begin{theorem}[{\cite[Theorem~3.4]{branden-huh}}]\label{thm:stable-preserves-lorentzian}
    If $T$ is a homogeneous linear map that preserves stable polynomials and polynomials with nonnegative coefficients, then $T$ preserves Lorentzian polynomials.
\end{theorem}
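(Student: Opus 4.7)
The plan is to proceed by induction on the degree $d$ of the Lorentzian polynomial, using the recursive definition $\L_n^d = \{f \in \M_n^d : \partial_i f \in \L_n^{d-1} \text{ for all } i\}$ for $d > 2$. The base case $d \leq 2$ is essentially free: by definition $\L_n^d = \S_n^d$, and $T$ preserves $\S_n^d$ by hypothesis.

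For the inductive step, the naive approach would be to deduce $\partial_i T(p) \in \L_n^{d+\ell-1}$ from $\partial_i p \in \L_n^{d-1}$ and the inductive hypothesis, but this fails because a general homogeneous linear map $T$ does not commute with partial differentiation. The workaround I would pursue is a density/approximation argument. First, I would verify that $T(p)$ has $\M$-convex support; this should follow from a combinatorial analysis of how $T$ acts on monomials, combined with the fact that $T$ preserves stability (and stable polynomials with nonnegative coefficients automatically have $\M$-convex supports). Next, I would exploit the fact that $\L_n^d$ can be described as the closure, in the coefficient topology, of a family of polynomials naturally associated to stable polynomials via a normalization operator---morally, rescaling each coefficient by a multinomial factor. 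Approximating $p$ by a sequence of such normalized stable polynomials $p_k$, one applies $T$: since $T$ is linear on a finite-dimensional space and hence continuous, and since it preserves both stability and nonnegativity of coefficients, the images $T(p_k)$ should themselves be normalized stable polynomials, so that their limit $T(p)$ lands in $\L_n^{d+\ell}$.

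The main obstacle is making the density/approximation step rigorous. One must pin down the precise density result linking $\S_n^d$ to $\L_n^d$---the naive closure $\overline{\S_n^d}$ does not suffice, since the inclusion $\S_n^d \subsetneq \L_n^d$ is strict---and verify that the chosen normalization operator interacts correctly with $T$. This is the technical heart of the argument, and it is where one must invoke the structural machinery of Lorentzian polynomials from~\cite{branden-huh}, in particular the Hessian-signature characterization of strictly Lorentzian polynomials and their realization as limits of products of linear forms with positive coefficients (after normalization). Once the approximation is in place, continuity of $T$ and preservation of $\M$-convex supports combine to deliver the conclusion.
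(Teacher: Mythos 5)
The paper does not prove this statement at all: it is imported verbatim from Br\"and\'en--Huh \cite[Theorem~3.4]{branden-huh}, so your proposal can only be measured against the original argument. That argument is not an induction-plus-density argument; it goes through the \emph{symbol} of $T$: for $T\colon\RR_\kappa[w_1,\dots,w_n]\to\RR[w_1,\dots,w_n]$ homogeneous one considers $T\bigl[\prod_{i=1}^n(w_i+u_i)^{\kappa_i}\bigr]$, where $T$ acts on the $w$-variables and $u_1,\dots,u_n$ are auxiliary variables. Since $\prod_i(w_i+u_i)^{\kappa_i}$ is stable and stability preservers extend to polynomials with coefficients in the extra variables, the symbol is stable; since $T$ preserves nonnegativity it has nonnegative coefficients; being homogeneous, it is therefore Lorentzian, and Br\"and\'en--Huh's Theorem~3.2 (a homogeneous operator with Lorentzian symbol preserves Lorentzian polynomials) concludes. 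Your sketch never constructs the symbol, and the density claim you substitute for it is the point where the argument breaks; the base case and the continuity of $T$ on the finite-dimensional space $\RR_\kappa[w]$ are fine but carry none of the load.

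Concretely, $\L_n^d$ is \emph{not} the closure of the normalized stable polynomials $N(\S_n^d)$, nor of normalized products of linear forms with positive coefficients (these are stable, so they form a subfamily of the former). Already in two variables, where $\L_2^d$ corresponds to $\ULC(d)$: if $b=\sum_i b_ix^iy^{d-i}$ is stable with nonnegative coefficients, Newton's inequalities make $b_i/\binom{d}{i}$ log-concave, and the coefficients $a_i=b_i/\bigl(i!\,(d-i)!\bigr)$ of $N(b)$ then satisfy the closed condition that $a_i/\binom{d}{i}^2$ is log-concave; every limit of such polynomials satisfies it as well. But $(x+y)^d$ is Lorentzian (even stable), while $1/\binom{d}{i}$ is strictly log-convex for $1\le i\le d-1$, so $(x+y)^d$ lies outside this closure for every $d\ge 2$. (In fact $N$ itself preserves stability, so the closure of $N(\S_n^d)$ is contained in $\S_n^d\cup\{0\}$ --- exactly the naive closure you already discarded.) There is a second, independent gap: even granted some dense class $D$ with $\overline{D}=\L_n^d$, you must show $T(D)\subseteq\L_n^{d+\ell}$, and your justification ``$T(p_k)$ should themselves be normalized stable polynomials'' does not follow from the hypotheses, which control $T$ on stable polynomials, not on their normalizations; nothing forces $T$ to commute with $N$, so $T(N(f))$ need not be $N(g)$ for any stable $g$. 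The density statement that \emph{is} true, $\L_n^d=\overline{\Lcirc{}_n^d}$ with strictly Lorentzian polynomials characterized by Hessian signatures, does not interact with the hypotheses on $T$ in any evident way --- which is precisely why the original proof routes through the symbol. As it stands, your proposal does not establish the theorem.
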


We will use the following fundamental operations, which preserve stability and the Lorentzian property

\begin{theorem}\label{thm:products-and-changes-of-var}
    Let $p$ and $q$ be polynomials with nonnegative coefficients. 
    \begin{enumerate}[\normalfont (i)]
        \item\label{it:positive-change-var} \cite[Theorem~2.10]{branden-huh} If $p(x_1,\ldots,x_m)$ is Lorentzian (resp. stable) and $A\in \mathbb{R}^{m\times n}$ is a matrix with nonnegative entries, then $p(A\cdot (x_1,\ldots,x_n)^T)$ is Lorentzian (resp. stable).
        \item\label{it:products} \cite[Corollary~2.32]{branden-huh} If $p$ and $q$ are Lorentzian (resp. stable) then so is their pro\-duct~$pq$.
    \end{enumerate}
\end{theorem}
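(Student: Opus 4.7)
The plan is to dispatch the stability claims first and then bootstrap the Lorentzian claims via Theorem~\ref{thm:stable-preserves-lorentzian} wherever possible. The stable part of (ii) is immediate: if $p$ and $q$ do not vanish on the open upper half-plane product, neither does $pq$.

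For the stable part of (i), I would fix $x = (x_1, \ldots, x_n)$ with $\Im(x_j) > 0$ for all $j$ and set $y = Ax$. Each coordinate $y_i = \sum_j A_{ij} x_j$ has $\Im(y_i) \geq 0$, since the entries of $A$ are non-negative. When $A$ has strictly positive entries, $\Im(y_i) > 0$ for every $i$ and stability of $p$ directly gives $p(y) \neq 0$. For the general non-negative case, I would approximate $A$ by $A_\varepsilon = A + \varepsilon J$ (with $J$ the all-ones matrix); then $p(A_\varepsilon x) \neq 0$ on the upper half-plane product for every $\varepsilon > 0$. Hurwitz's theorem, applied to the family of holomorphic functions $x \mapsto p(A_\varepsilon x)$ on the connected domain where $\Im(x_j) > 0$ for $1 \leq j \leq n$, then ensures that the limit $p(Ax)$ is either identically zero (which is itself stable by convention) or nowhere zero on this domain.

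For the Lorentzian version of (i), I would apply Theorem~\ref{thm:stable-preserves-lorentzian} to the linear operator $T \colon p \mapsto p(Ax)$ on homogeneous components. This $T$ is homogeneous of degree $0$, preserves non-negativity of coefficients (because the entries of $A$ are non-negative), and preserves stability by the case just established; hence $T$ preserves Lorentzian polynomials.

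The main obstacle is the Lorentzian part of (ii). The multiplication-by-$q$ operator $p \mapsto pq$ is linear and homogeneous of degree $\deg q$, and preserves non-negative coefficients whenever $q$ does. However, Theorem~\ref{thm:stable-preserves-lorentzian} is not directly applicable here: a Lorentzian $q$ need not be stable, so multiplying a stable $p$ by $q$ need not produce a stable polynomial. Two natural routes around this suggest themselves. The first is an approximation argument: Lorentzian polynomials arise as limits of stable polynomials with compatible support, so one writes $q = \lim q_k$ with each $q_k$ stable, obtains that $pq_k$ is stable (hence Lorentzian) via the stable case of (ii), and passes to the limit in the closed cone $\L_n^{\deg p + \deg q}$; a parallel limit on the $p$ side extends the conclusion to general Lorentzian $p$. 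The alternative is an induction on $\deg p + \deg q$: the base case $\deg \leq 2$ reduces to stability, while the inductive step relies on the Leibniz identity $\partial_i(pq) = (\partial_i p)q + p(\partial_i q)$ together with the fact that $\supp(pq) = \supp(p) + \supp(q)$ remains $\M$-convex, so that the derivative conditions defining $\L_n^d$ propagate from $p$ and $q$ individually.
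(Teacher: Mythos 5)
First, a point of comparison: the paper gives no proof of this statement at all --- both items are quoted from Br\"and\'en--Huh (\cite[Theorem~2.10]{branden-huh} and \cite[Corollary~2.32]{branden-huh}) --- so there is no internal argument to measure yours against. Within that caveat, your treatment of the stable claims and of the Lorentzian part of (i) is sound: the product of two functions nonvanishing on the upper half-plane polydisc is nonvanishing; the perturbation $A_\varepsilon=A+\varepsilon J$ together with the multivariate Hurwitz theorem is the standard argument for stability under nonnegative substitutions; and deducing the Lorentzian case of (i) from Theorem~\ref{thm:stable-preserves-lorentzian} is legitimate as long as one takes that theorem as a black box, as this paper does (in the original source the operator-preservation theorem comes later and partly relies on the change-of-variables result, so your route would be circular inside \cite{branden-huh} itself, but not here).

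The genuine gap is exactly where you locate the difficulty, the Lorentzian case of (ii), and neither of your proposed escape routes works. The approximation route rests on a false premise: Lorentzian polynomials are \emph{not} limits of stable polynomials. By Hurwitz, a limit of homogeneous stable polynomials with nonnegative coefficients is again stable or identically zero, so $\S_n^d\cup\{0\}$ is closed; since $\S_n^d\subsetneq \L_n^d$ strictly for $d>2$ (as the paper itself records), a non-stable Lorentzian $q$ admits no stable approximants $q_k\to q$. The correct approximation statement in \cite{branden-huh} is by \emph{strictly Lorentzian} polynomials, which does not reduce the product question to the stable case. The inductive route fails at the Leibniz step: $\L_n^{d-1}$ is a non-convex cone, so knowing that $(\partial_i p)\,q$ and $p\,(\partial_i q)$ are each Lorentzian does not imply that their sum $\partial_i(pq)$ is Lorentzian, and the $\M$-convexity of $\supp(p)+\supp(q)$ does nothing to repair this. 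This is precisely why the actual proof of \cite[Corollary~2.32]{branden-huh} is more involved (roughly: one first treats products in disjoint sets of variables, then identifies variables using part (i), or argues via the Hodge--Riemann-type eigenvalue characterization of Lorentzian polynomials). As it stands, the hardest quarter of the theorem is asserted but not proved in your proposal.
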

In particular, diagonalization, i.e., setting several variables to be equal, preserves both stability and the Lorentzian property. Furthermore, the coefficients of stable (resp. Lorentzian) polynomials are again stable (resp. Lorentzian) as both of these properties are also preserved under differentiation and setting variables to zero.

Next, we translate the statement of Theorem~\ref{thm:ultra-log-concavity} into the setting of Lorentzian polynomials.


Let $\RR_d[x]$ be the space of polynomials in $\RR[x]$ of degree at most $d$, and let $\RR[x,y]_{d}$ be the linear space of homogeneous bivariate polynomials in $\RR[x,y]$ of degree $d$. The set $\{\binom {x+d-i} d\}_{i=0}^d$ is a basis of $\RR_d[x]$. We define the bijective linear map  $T_d : \RR_d[x]\longrightarrow \RR[x,y]_d$ by 
\[\binom {x+d-i} d \longmapsto x^iy^{d-i}, \ \ \mbox{ for } 0\leq i \leq d,\]
and consider the bilinear map 
$\RR[x,y]_a \times \RR[x,y]_b \longrightarrow \RR[x,y]_{a+b}$, $(p,q)\longmapsto p\bullet q$, defined by 
\[p \bullet q =  T_{a+b}\big( T_a^{-1}(p)\cdot T_b^{-1}(q) \big). \]
Observe that for any polynomial $p(x)$ of degree $d$, $\mathscr{W}(p)=\sum _{i=0}^d h_i x^i$ if and only if $p(x)=\sum _{i=0}^d h_i\binom {x+d-i} d$. Thus, by definition, $T_d (p)=\mathcal{H}_d\left(\mathscr{W}(p)\right)$. Therefore, for any polynomials $p$ and $q$
\begin{equation}\label{eq:homog} \mathcal{H}\left(\mathscr{W}(p)\right)\bullet \mathcal{H}\left(\mathscr{W}(q)\right) \ = \ \mathcal{H}\left(\mathscr{W}(pq)\right),
\end{equation}
where in each of the above factors the homogenization operators $\mathcal{H}$ are applied with respect to the degrees of $p$, $q$, and $pq$ respectively.

Thus, Theorem~\ref{thm:ultra-log-concavity} (resp. Theorem~\ref{thm:wagner-main}) is equivalent to proving that if $p$ and $q$ are Lorentzian (resp. stable with nonnegative coefficients), $p\bullet q$ is Lorentzian (resp. stable with nonnegative coefficients). In order to do that, we begin by first explicitly describing the action of the bilinear map $(p,q)\mapsto p\bullet q$ on monomials.  

\begin{lemma}\label{lemma:comb-identity}
For integers $0\leq k \leq a$ and $0\leq \ell \leq b$, 
\begin{equation}\label{bilin}
    \left(x^ky^{a-k}\right)\bullet \left(x^\ell y^{b-\ell}\right) =  \sum_i \binom {a-k+\ell}{i-k} \cdot \binom {b-\ell+k}{i-\ell} \cdot x^i y^{a+b-i} \, .
\end{equation}
\end{lemma}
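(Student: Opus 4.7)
The plan is to unfold the definition of $\bullet$ into a polynomial identity in $\RR_{a+b}[x]$ and prove that identity via generating functions and a short induction. By definition, $(x^ky^{a-k})\bullet(x^\ell y^{b-\ell}) = T_{a+b}\!\left(\binom{x+a-k}{a}\binom{x+b-\ell}{b}\right)$, and since $T_{a+b}$ sends $\binom{x+a+b-i}{a+b}$ to $x^iy^{a+b-i}$, the lemma amounts to the polynomial identity
\[
\binom{x+a-k}{a}\binom{x+b-\ell}{b}=\sum_i \binom{a-k+\ell}{i-k}\binom{b-\ell+k}{i-\ell}\binom{x+a+b-i}{a+b}.
\]

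To prove this I pass to generating functions via $\sum_{j\geq 0}\binom{j+d-i}{d}t^j = t^i/(1-t)^{d+1}$: the displayed identity is equivalent to showing that $\sum_{j \geq 0} \binom{j+a-k}{a}\binom{j+b-\ell}{b}t^j$ equals the proposed rational function with denominator $(1-t)^{a+b+1}$. By the symmetry of the statement I may assume $\ell \geq k$; since $\binom{j+b-\ell}{b}$ vanishes at $j = 0,1,\dots,\ell-1$, the substitution $n = j - \ell$ reduces the task to the closed-form $\mathscr{W}$-identity
\[
\mathscr{W}\!\left(\binom{x+A}{a}\binom{x+b}{b}\right)(t) \;=\; [u^a]\,(1+u)^A(1+tu)^{a+b-A}, \qquad A := a + \ell - k \geq a,
\]
after which expanding the right-hand side as $\sum_j \binom{A}{a-j}\binom{a+b-A}{j}t^j$ and multiplying by $t^\ell$ (using $\binom{n}{r}=\binom{n}{n-r}$ and reindexing $i = j+\ell$) recovers the desired numerator.

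I would prove the closed-form identity above by induction on $A - a \geq 0$. The base case $A = a$ is the classical identity $\binom{x+a}{a}\binom{x+b}{b}=\sum_j \binom{a}{j}\binom{b}{j}\binom{x+a+b-j}{a+b}$, which computes the numerator of the Segre-product Hilbert series and is standard. For the inductive step, Pascal's rule gives $\binom{x+A+1}{a}\binom{x+b}{b}=\binom{x+A}{a}\binom{x+b}{b}+\binom{x+A}{a-1}\binom{x+b}{b}$, whose summands have degrees $a+b$ and $a+b-1$ respectively; consequently their $\mathscr{W}$-images combine as $\mathscr{W}(P_1)(t) + (1-t)\mathscr{W}(P_2)(t)$. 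Applying the inductive hypothesis to both and using the elementary telescoping $(1+tu)^{a+b-A}-(1+tu)^{a+b-A-1}=tu\,(1+tu)^{a+b-A-1}$ inside $[u^a]$ collapses the sum to $[u^a](1+u)^{A+1}(1+tu)^{a+b-A-1}$, completing the induction.

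The main obstacle is exactly this telescoping step: the extra factor $(1-t)$ picked up by the lower-degree summand's $\mathscr{W}$-image must cancel precisely with the $tu$-difference produced by Pascal's identity applied to $(1+tu)^{a+b-A}$. All remaining steps—the reduction from the polynomial identity to the $\mathscr{W}$-identity, the shift in the summation index, and the final reindexing of binomial coefficients—are routine bookkeeping.
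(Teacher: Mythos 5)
Your proposal is correct in substance but takes a genuinely different route from the paper. Both arguments reduce the lemma, via the definition of $\bullet$ and $T_{a+b}$, to the polynomial identity $\binom{x+a-k}{a}\binom{x+b-\ell}{b}=\sum_i\binom{a-k+\ell}{i-k}\binom{b-\ell+k}{i-\ell}\binom{x+a+b-i}{a+b}$. At that point the paper simply cites Nanjundiah's identity from Riordan's book and specializes $m=x+a-k$, $n=x+b-\ell$, $p=a$, $q=b$; you instead give a self-contained derivation: pass to generating functions, use $\ell\geq k$ (legitimate, since $\bullet$ is commutative and the right-hand side is symmetric under $(a,k)\leftrightarrow(b,\ell)$) together with the vanishing of $\binom{j+b-\ell}{b}$ for $0\leq j<\ell$ to pull out a factor $t^\ell$, and then prove the closed form $\mathscr{W}\big(\binom{x+A}{a}\binom{x+b}{b}\big)=[u^a](1+u)^A(1+tu)^{a+b-A}$ by Pascal's rule from the classical Segre/product-of-simplices base case $A=a$. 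I checked the key step: writing $G=(1+u)^A(1+tu)^{a+b-A-1}$, one has $[u^a]G(1+tu)+(1-t)[u^{a-1}]G=[u^a]G+[u^{a-1}]G=[u^a](1+u)G$, which is exactly the claimed collapse, and the final reindexing $i=j+\ell$ with $\binom{A}{a-j}=\binom{A}{i-k}$ does recover the stated numerator. Your route is longer but elementary and self-contained (it reproves the Segre numerator formula as its base case), whereas the paper's proof is a two-line specialization at the cost of invoking a classical identity as a black box.

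One repair is needed in the bookkeeping: as written, your induction ``on $A-a$'' does not close, because after Pascal's rule the second summand $\binom{x+A}{a-1}\binom{x+b}{b}$ has parameters $(a-1,A)$ with $A-(a-1)=(A+1)-a$, i.e.\ the same value of your induction parameter as the statement being proved. Induct instead on $A$ (or lexicographically on $(A-a,a)$): both summands then involve strictly smaller second parameter, the base case $A=a$ is your Segre identity, and the rest of the argument goes through verbatim. Note also that the relevant range $a\leq A\leq a+b$ (which holds since $0\le k$ and $\ell\le b$) is preserved under both reductions, so the exponent $a+b-A$ stays nonnegative throughout.
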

\begin{proof}
    We rely on the following combinatorial identity attributed to Nanjundiah, cf. Riordan's book~\cite[Equation~(12),\ p.~16]{riordan}, 
    \[ \binom{m}{p}\binom{n}{q} = \sum_{j} \binom{n+j}{p+q}\binom{m-n+q}{j}\binom{n-m+p}{p-j}. \]
    Specializing $m = x + a - k$, $n = x + b - \ell$, $p = a$, $q = b$, and changing the variable of the summation by $i=a+j-\ell$ we obtain 
        \[
    \binom {x+a-k} a \cdot \binom {x+b-\ell} b = \sum_i \binom {a-k+\ell}{i-k} \cdot \binom {b-\ell+k}{i-\ell} \cdot \binom {x+a+b-i}{a+b}.
    \]
    from which the claim follows.
\end{proof}

Next, for homogeneous polynomials $p$ and $q$ we express $(p \bullet q)(x,y)$ as a coefficient of a homogeneous polynomial in three variables.
\begin{lemma}\label{form}
Suppose $p \in  \RR[x,y]_{a}$ and $q \in  \RR[x,y]_{b}$. Then $(p \bullet q)(x,y)$ is equal to the coefficient of $z^{2b}$ in 
    \begin{equation}\label{eq:poly-r}
        r(x,y,z) := p(x+z,y+z)(x+z)^b(y+z)^b q\left( \frac {xz} {x+z}, \frac {yz} {y+z}\right).
    \end{equation}
\end{lemma}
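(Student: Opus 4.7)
The plan is to verify the formula directly on a basis of monomials and then invoke bilinearity of $\bullet$. Since the map $(p,q)\mapsto p\bullet q$ and the map $(p,q)\mapsto r(x,y,z)$ defined in \eqref{eq:poly-r} are both bilinear (the latter as a consequence of $q\mapsto q(xz/(x+z),\,yz/(y+z))$ being linear), it suffices to prove the equality when $p(x,y)=x^k y^{a-k}$ and $q(x,y)=x^\ell y^{b-\ell}$ for some $0\le k\le a$ and $0\le\ell\le b$.

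With these choices, the homogeneity of $q$ makes the substitution $(xz/(x+z),\,yz/(y+z))$ innocuous once multiplied by $(x+z)^b(y+z)^b$. First I would compute
\[
(x+z)^b(y+z)^b\, q\!\left(\tfrac{xz}{x+z},\,\tfrac{yz}{y+z}\right) \;=\; x^\ell y^{b-\ell} z^b\, (x+z)^{b-\ell}(y+z)^{\ell},
\]
and then combine this with $p(x+z,y+z)=(x+z)^k(y+z)^{a-k}$ to obtain the clean factorization
\[
r(x,y,z) \;=\; x^\ell y^{b-\ell} z^b\, (x+z)^{k+b-\ell}(y+z)^{a-k+\ell}.
\]
Extracting the coefficient of $z^{2b}$ thus amounts to extracting the coefficient of $z^b$ in the product $(x+z)^{k+b-\ell}(y+z)^{a-k+\ell}$.

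A binomial expansion of each factor and the substitution $i=k+b-u$ (where $u$ is the exponent of $z$ coming from the first factor) then yields
\[
[z^{2b}]\, r(x,y,z) \;=\; \sum_i \binom{k+b-\ell}{i-\ell}\binom{a-k+\ell}{i-k}\, x^i y^{a+b-i},
\]
using $\binom{k+b-\ell}{k+b-i}=\binom{k+b-\ell}{i-\ell}$. Comparing with \eqref{bilin} from Lemma~\ref{lemma:comb-identity} and noting that $k+b-\ell=b-\ell+k$, we recover exactly $\bigl(x^ky^{a-k}\bigr)\bullet\bigl(x^\ell y^{b-\ell}\bigr)$, which closes the argument.

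I do not anticipate a serious obstacle; the main bookkeeping task is confirming that the rational expression $r(x,y,z)$ is in fact a polynomial (which follows monomial-by-monomial from the factorization above) and keeping the indexing on the two binomial coefficients aligned with Lemma~\ref{lemma:comb-identity}. The only conceptual ingredient beyond this mechanical verification is the observation that once the identity is established for monomials, bilinearity of both sides immediately gives the general statement.
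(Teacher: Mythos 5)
Your proposal is correct and follows essentially the same route as the paper: reduce by bilinearity to the monomials $p=x^ky^{a-k}$, $q=x^\ell y^{b-\ell}$, compute $r(x,y,z)=x^\ell y^{b-\ell}z^b(x+z)^{b+k-\ell}(y+z)^{a+\ell-k}$, extract the coefficient of $z^{2b}$ by binomial expansion, and match it with the formula in Lemma~\ref{lemma:comb-identity} after the index substitution. The binomial bookkeeping in your sketch checks out, so there is no gap.
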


\begin{proof}
    The polynomial $r(x,y,z)$ depends bilinearly on $p$ and $q$. Thus, it suffices to prove the claim for $p=x^ky^{a-k}$ and $q=x^\ell y^{b-\ell}$. In this case, the calculation shows that $r(x,y,z)$ equals

    \[
    x^\ell y^{b-\ell}z^b (x+z)^{b+k-\ell}(y+z)^{a+\ell-k}\, .
    \]
    The coefficient of $z^{2b}$ in this polynomial equals
    \[
    \sum_{j} \binom {b+k-\ell} {j} \binom {a+\ell-k}{b-j} x^{b+k-j}y^{a-k+j}
    \]
    which agrees with the right hand side of~\eqref{bilin} after substituting $i=b+k-j$. 
\end{proof}

\begin{lemma}\label{transf}
Suppose $p(x_1, x_2,\ldots, x_n)$ is a polynomial of degree at most $m$ in $x_1$, and let 
    \[
    q(x_1, y_1,x_2, x_3,\ldots, x_n) = (x_1+y_1)^m p\left( \frac {x_1y_1}{x_1+y_1}, x_2,\ldots, x_n\right).
    \]
    \begin{enumerate}[\normalfont(i)]
        \item \label{it:stable} If $p$ is stable, then so is $q$.
        \item \label{it:lorentzian} If $p$ is Lorentzian, then so is $q$.
    \end{enumerate}
\end{lemma}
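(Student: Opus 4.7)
The plan is to establish (i) directly from the definition of stability via a substitution argument, and then to deduce (ii) from (i) by invoking Theorem~\ref{thm:stable-preserves-lorentzian}. First I would record the polynomial form of $q$: writing $p = \sum_{k=0}^m x_1^k p_k(x_2,\ldots,x_n)$ with $p_k\in\RR[x_2,\ldots,x_n]$, the factor $(x_1+y_1)^m$ absorbs all denominators and gives
\[
q(x_1,y_1,x_2,\ldots,x_n) \;=\; \sum_{k=0}^m (x_1y_1)^k(x_1+y_1)^{m-k}\,p_k(x_2,\ldots,x_n).
\]
In particular, if $p$ is homogeneous of degree $d$, then $q$ is homogeneous of degree $d+m$; the monomial action of $T\colon p\mapsto q$ is $T(x_1^k x_2^{\alpha_2}\cdots x_n^{\alpha_n}) = (x_1y_1)^k(x_1+y_1)^{m-k}x_2^{\alpha_2}\cdots x_n^{\alpha_n}$.

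For part (i), the key point is that the rational map $(x_1,y_1)\mapsto \frac{x_1y_1}{x_1+y_1}$ sends a pair of complex numbers with positive imaginary parts to another such number. This follows from the identity $\frac{x_1+y_1}{x_1y_1} = \frac{1}{x_1} + \frac{1}{y_1}$: if $\Im(x_1),\Im(y_1)>0$, then both $1/x_1$ and $1/y_1$ have negative imaginary part, so does their sum, and hence its reciprocal $\frac{x_1y_1}{x_1+y_1}$ has positive imaginary part. Consequently, for any $(x_1,y_1,x_2,\ldots,x_n)\in\CC^{n+1}$ with all coordinates having positive imaginary part, the tuple $(\frac{x_1y_1}{x_1+y_1},x_2,\ldots,x_n)$ lies in the same region, so stability of $p$ gives a nonzero value, and $(x_1+y_1)^m\neq 0$ since $\Im(x_1+y_1)>0$; therefore $q\neq 0$, proving that $q$ is stable.

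For part (ii), I would view $T\colon p\mapsto q$ as a linear map and verify the three hypotheses of Theorem~\ref{thm:stable-preserves-lorentzian}. The image of a monomial has total degree $2k+(m-k)+\alpha_2+\cdots+\alpha_n = m+|\alpha|$, so $T$ is homogeneous of degree $m$; it preserves non-negativity of coefficients by inspection; and by part (i) it sends stable polynomials to stable polynomials. Theorem~\ref{thm:stable-preserves-lorentzian} then yields that $T$ preserves Lorentzian polynomials, finishing the proof.

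The main (rather mild) obstacle is that Theorem~\ref{thm:stable-preserves-lorentzian} is phrased for linear maps between polynomial rings in a common set of variables, whereas $T$ enlarges the variable set by $y_1$. This is resolved by regarding $p$ as a polynomial in $\RR[x_1,y_1,x_2,\ldots,x_n]$ with trivial $y_1$-dependence (and degree at most $m$ in $x_1$); then $T$ becomes a homogeneous linear map on a single polynomial ring, to which the theorem applies directly.
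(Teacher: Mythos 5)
Your proof is correct and follows essentially the same route as the paper: part (i) via the observation that $(x_1,y_1)\mapsto \frac{x_1y_1}{x_1+y_1}=\frac{1}{x_1^{-1}+y_1^{-1}}$ maps the product of upper half-planes into the upper half-plane, and part (ii) by viewing $p\mapsto q$ as a homogeneous linear map preserving stability and nonnegative coefficients and invoking Theorem~\ref{thm:stable-preserves-lorentzian}. You in fact spell out details the paper leaves implicit (the explicit monomial action, the homogeneity degree $m$, nonnegativity, and the bookkeeping for the extra variable $y_1$), all of which check out.
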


\begin{proof}
    Let $H= \{z \in \CC : \mathrm{Im}(z) >0\}$ be the open upper half-plane of $\CC$. The map 
    \[
    (x,y) \longmapsto \frac {xy}{x+y} = \frac {1}{x^{-1}+y^{-1}}
    \]
    maps $H \times H$ to $H$, from which~\ref{it:stable} follows. 
    The map $p \longmapsto q$ defines a homogeneous map which preserves stability by~\ref{it:stable}, thus the validity of~\ref{it:lorentzian} is a consequence of Theorem~\ref{thm:stable-preserves-lorentzian}. 
\end{proof}

We have now all the ingredients to prove the following main result of this section. 

\begin{theorem}\label{lorpres}
    If $p$ and $q$ are two Lorentzian (resp. homogeneous stable) bivariate polynomials, then $p \bullet q$ is Lorentzian (resp. homogeneous stable). 
\end{theorem}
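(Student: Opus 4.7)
The plan is to exploit Lemma~\ref{form}, which expresses $(p \bullet q)(x,y)$ as the coefficient of $z^{2b}$ in the three-variable polynomial
$$r(x,y,z) = p(x+z,y+z)(x+z)^b(y+z)^b\, q\!\left(\frac{xz}{x+z}, \frac{yz}{y+z}\right).$$
It will therefore suffice to show that $r$ itself is stable (respectively Lorentzian), because extracting the coefficient of $z^{2b}$ amounts to applying $\tfrac{1}{(2b)!}\partial_z^{2b}$ and then setting $z=0$, both of which preserve stability and the Lorentzian property (as noted after Theorem~\ref{thm:products-and-changes-of-var}).

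To establish that $r$ is stable/Lorentzian, I would split it as a product of two factors and handle each separately. For the first, $p(x+z,y+z)$, the substitution $(x_1,x_2)\mapsto (x+z,y+z)$ is a linear change of variables with nonnegative matrix, so Theorem~\ref{thm:products-and-changes-of-var}(i) applies. For the second factor
$$(x+z)^b(y+z)^b\, q\!\left(\frac{xz}{x+z},\frac{yz}{y+z}\right),$$
the plan is to apply Lemma~\ref{transf} twice. Since $q(x_1,x_2)$ is homogeneous of degree $b$, it has degree at most $b$ in $x_1$, so a first application yields that $(x_1+y_1)^b\, q(x_1y_1/(x_1+y_1), x_2)$ is stable/Lorentzian. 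A second application, now with the roles of $(x_1,y_1)$ played by the pair $(x_2,y_2)$, gives that
$$(x_1+y_1)^b(x_2+y_2)^b\, q\!\left(\frac{x_1y_1}{x_1+y_1}, \frac{x_2y_2}{x_2+y_2}\right)$$
is stable/Lorentzian. Diagonalizing $y_1=y_2=z$ and renaming $(x_1,x_2)=(x,y)$ is another linear substitution with nonnegative matrix, so one more application of Theorem~\ref{thm:products-and-changes-of-var}(i) produces the desired second factor. Multiplying the two factors, Theorem~\ref{thm:products-and-changes-of-var}(ii) then gives that $r(x,y,z)$ is stable/Lorentzian.

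The crux — and the step I expect to be the main obstacle — is recognizing the factorization of $r$ that exposes two clean applications of Lemma~\ref{transf}; the symmetric way in which the auxiliary variable $z$ enters both factors is what allows the diagonalization to yield precisely the form needed in Lemma~\ref{form}. Beyond that, the proof is essentially an assembly of preservation results, and it is designed to handle the stable and Lorentzian cases in parallel, since every tool invoked — linear substitutions with nonnegative matrix, products, differentiation, and specialization of a variable to zero — preserves both properties simultaneously.
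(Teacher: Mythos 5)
Your proposal is correct and follows essentially the same route as the paper's proof: reduce via Lemma~\ref{form} to showing $r(x,y,z)$ is stable/Lorentzian, handle $p(x+z,y+z)$ by a nonnegative linear substitution, obtain the second factor by applying Lemma~\ref{transf} twice and diagonalizing the two new variables to $z$, and conclude by closure under products and coefficient extraction. The only difference is that you spell out the intermediate steps (including the degree bound needed for the second application of Lemma~\ref{transf}) in more detail than the paper does.
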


\begin{proof}
 Since coefficients of Lorentzian (resp. homogeneous stable) polynomials are again Lorentzian (resp. homogeneous stable), by Lemma~\ref{form} it suffices to argue that the polynomial $r(x,y,z)\in \mathbb{R}[x,y,z]$ defined as in equation~\eqref{eq:poly-r} is Lorentzian (resp. homogeneous stable). Since $p(x+z,y+z)$ is Lorentzian (resp. homogeneous stable) by Theorem~\ref{thm:products-and-changes-of-var}, it remains to verify that the factor 
    \[(x+z)^b(y+z)^b q\left(\frac{xz}{x+z},\frac{yz}{y+z}\right)\]
    is Lorentzian (resp. homogeneous stable). This follows by applying Lemma~\ref{transf} twice, introducing two new variables, followed by setting both these equal to $z$.
\end{proof}

\begin{proof}[Proof of Theorems~\ref{thm:wagner-main} and~\ref{thm:ultra-log-concavity}]
    By equation~\eqref{eq:homog}, the preservation of the Lorentzian property (resp. stability) under the bilinear map $-\bullet -$ is equivalent to preservation of ultra log-concavity (resp. real-rootedness) under the Hadamard product. Thus, Theorems~\ref{thm:ultra-log-concavity} and Theorem~\ref{thm:wagner-main} follow immediately from Theorem~\ref{lorpres}. 
\end{proof}

\begin{example}\label{ex:no-int-zeros-nec}
    The condition that the polynomials lack internal zeros is essential in Theorem~\ref{thm:ultra-log-concavity}. Consider the polynomials
    \begin{align*}
        p(x) &= \frac{1}{720}\left(2x^6+24x^5+170x^4+720x^3+1628x^2+1776x+720\right)\\
        q(x) &= \frac{1}{6}\left(x^3+6x^2+11x+6\right).
    \end{align*}
    One may compute that $\mathscr{W}(p) = x^3+1$ and $\mathscr{W}(q) = 1$. Both of these polynomials are ultra log-concave, but $\mathscr{W}(p)$ has internal zeros. However, we have
    \[\mathscr{W}(pq) = x^{6} + 18 \, x^{5} + 45 \, x^{4} + 40 \, x^{3} + 45 \, x^{2} + 18 \, x + 1,\]
    which is not ultra log-concave (in fact, not even unimodal). However, as we will see in Proposition~\ref{prop:no-int-zeros}, the property of having no internal zeros is itself preserved under this product.
\end{example}

\section{Gamma positivity}\label{sec:gammapositivity}

\noindent In this section we consider $\gamma$-polynomials and $\gamma$-positivity of symmetric polynomials. First, we will show that the Hadamard product preserves $\gamma$-positivity (Theorem~\ref{thm:gamma-positivity}). Furthermore, we will prove that any symmetric polynomial having an ultra log-concave $\gamma$-polynomial is itself ultra log-concave. 



\subsection{Hadamard products and \texorpdfstring{$\gamma$}{Gamma}-positivity}

For any polynomial $h(x)$ of degree $s\leq d$, let $\mathcal{I}_d(h)(x) = x^d h(1/x)$. In order to prove Theorem~\ref{thm:gamma-positivity}, we use the next folklore lemma which follows from a more general result on rational generating functions (see, e.g., \cite[Corollary~4.2.4(iii)]{stanley-ec1}). For the sake of completeness, we provide a self-contained proof here.

\begin{lemma}\label{lem:functionaleq}
    Let $p$ be a polynomial of degree $d$, denote $h = \mathscr{W}(p)$, and assume that $\deg h \leq s \leq d$. Then, the following are equivalent.
    \begin{enumerate}[\normalfont(i)]
        \item $\mathcal{I}_s h(x) = h(x)$, 
        \item $(-1)^d p (-(x+d+1-s)) = p(x)$.
    \end{enumerate}
\end{lemma}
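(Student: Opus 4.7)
The plan is to expand both sides of (ii) explicitly in the binomial basis $\bigl\{\binom{x+d-i}{d}\bigr\}_{i=0}^{d}$ of $\RR_d[x]$ and read off the equivalence by comparing coefficients.

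First I would unpack the definition of $\mathscr{W}$. Writing $h(x) = \sum_{i=0}^{s} h_i x^i$ and expanding
\[
\frac{h(x)}{(1-x)^{d+1}} \;=\; \sum_{i=0}^{s} h_i\, x^i \cdot \sum_{k\geq 0} \binom{k+d}{d} x^k
\]
yields $p(j) = \sum_{i=0}^{s} h_i \binom{j+d-i}{d}$ for every integer $j\geq 0$. Since both sides are polynomials in $j$ of degree at most $d$ and agree at infinitely many integer points, this upgrades to the polynomial identity $p(x) = \sum_{i=0}^{s} h_i \binom{x+d-i}{d}$ in $\RR_d[x]$.

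Next I would substitute $x \mapsto -(x+d+1-s)$ and invoke the standard identity $\binom{-m}{d} = (-1)^d \binom{m+d-1}{d}$, valid as a polynomial identity in $m$. A short sign-tracking computation simplifies $\binom{-(x+d+1-s)+d-i}{d}$ to $(-1)^d \binom{x+d-(s-i)}{d}$, so that
\[
(-1)^d\, p\bigl(-(x+d+1-s)\bigr) \;=\; \sum_{i=0}^{s} h_i \binom{x+d-(s-i)}{d} \;=\; \sum_{j=0}^{s} h_{s-j} \binom{x+d-j}{d},
\]
after the re-indexing $j = s-i$.

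Since $\bigl\{\binom{x+d-j}{d}\bigr\}_{j=0}^{d}$ is a basis of $\RR_d[x]$ and $h_j = 0$ for $j>s$ on the right-hand side, comparing with the expression for $p(x)$ shows that (ii) is equivalent to $h_{s-j} = h_j$ for all $j = 0, 1, \ldots, s$. On the other hand, $\mathcal{I}_s h(x) = x^s h(1/x) = \sum_{j=0}^{s} h_{s-j} x^j$, so $\mathcal{I}_s h = h$ is also equivalent to $h_{s-j} = h_j$ for every $j$, giving the equivalence with (i). The only delicate step is the sign bookkeeping in the binomial identity; the rest is formal manipulation.
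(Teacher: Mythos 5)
Your proposal is correct and follows essentially the same route as the paper: expand $p$ in the basis $\{\binom{x+d-i}{d}\}$, apply the binomial reciprocity identity under the substitution $x\mapsto -(x+d+1-s)$, and compare coefficients using linear independence of that basis. The sign bookkeeping in your computation checks out and matches the paper's displayed identity.
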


\begin{proof}
    Let us write $h(x) = \sum _{i=0}^s h_i x^i$. Then,
    \[
    p(x) = \sum _{i=0}^s h_i \binom{x+d-i}{d} \, .
    \]
    Hence, by applying the ``combinatorial reciprocity'' identity on binomial coefficients \cite[p.~20]{stanley-ec1} we obtain:
    \[
        (-1)^d p(-(x+d+1-s)) = (-1)^d \sum _{i=0}^s h_i \binom{-x+s-i-1}{d}
        = \sum _{i=0}^s h_{s-i} \binom{x+d-i}{d} \, .
    \]
The proof follows since the polynomials $\left\{\binom{x+d-i}{d}\right\}_{i=0}^s$ are linearly independent.
\end{proof}

\begin{remark}\label{remark:dual-lemma}
    There is a `dual' version of the previous lemma for the case $d\leq s \leq 2d$. In that scenario, the conditions (i) and (ii) are equivalent under the assumption that $h_0=h_1=\ldots = h_{s-d-1}=0$. The proof is analogous. \\
    Recall that the vanishing of the coefficients $h_i$ for $s < i \leq d$ is equivalent to $p(-1) = p(-2) = \cdots = p(-(d-s)) = 0$. Dually, if $s>d$, the vanishing of coefficients $h_i$ for $i < s-d$ is equivalent to $p(0) = p(1) = \cdots = p(s-d-1) = 0$.
\end{remark}

Now assume that $p$ is a polynomial of degree $d$, and that $\mathscr{W}(p)$ is symmetric with center of symmetry $s/2$, i.e., $\mathcal{I}_s\mathscr{W}(p) = \mathscr{W}(p)$. We define the \emph{defect} of $p$ as the quantity $\deg (p) - s$. We will denote this number by $\defect(p)$. 

Observe that the defect is a negative number if $d<s\leq 2d$. For example, if $p(x) = \tfrac{1}{6}x(x-1)(x+1)$, one obtains $h(x):=\mathscr{W}(p)(x) = x^2$ and, in the notation above, one has $\mathcal{I}_4h(x) = h(x)$, so that $d = 3$, $s=4$ and $\defect(p) = 3-4 = -1$.

Notice that if $c = \defect(p)$, then Lemma~\ref{lem:functionaleq} (or Remark~\ref{remark:dual-lemma}) implies
    \[p(x) = (-1)^d p(-(x+c+1)) .\]

We are now able to prove our second main result, namely that symmetry and $\gamma$-positivity is preserved under Hadamard products, given that both factors have the same defect.

\begin{proof}[Proof of Theorem~\ref{thm:gamma-positivity}]
    Let us denote $d = \deg p$ and $e = \deg q$. By Lemma~\ref{lem:functionaleq}, since $\mathscr{W}(p)$ and $\mathscr{W}(q)$ are symmetric, we have
    \begin{align*}
        (-1)^{d+e} pq (-(x+c+1))&=(-1)^dp(-(x+c+1))(-1)^e q(-(x+c+1))\\
        &=p(x)q(x),
    \end{align*}
    where $c = \defect(p) = \defect(q)$. Further, $pq(-i)=p(-i)q(-i)=0$ for $1\leq i\leq c$  (or $pq(i)=p(i)q(i)=0$ for $0\leq i\leq -c-1$ in the dual setting when $s>d$ and $c$ is negative; see Remark~\ref{remark:dual-lemma}.) Thus, $\mathscr{W}(pq)$ is again symmetric by Lemma~\ref{lem:functionaleq},  and evidently $\defect(pq) = c$. Regarding $\gamma$-positivity, by bilinearity it suffices to prove the claim for 
    \begin{align*}
     \mathscr{W}(p)&= x^i (x+1)^{s-2i}  &\text{for $i\leq s/2$},\\
     \mathscr{W}(q)&= x^j (x+1)^{\ell-2j}  &\text{for $j\leq \ell/2$},
    \end{align*}
    where $s = d - c$ and $\ell = e - c$. In that case, both $\mathscr{W}(p)$ and $\mathscr{W}(q)$ are real-rooted, and thus by Theorem~\ref{thm:wagner-main}, $\mathscr{W}(pq)$ is real-rooted. Since $\mathscr{W}(pq)$ is symmetric, we have that $\mathscr{W}(pq)$ is $\gamma$-positive.
\end{proof}

\subsection{Ultra log-concavity of \texorpdfstring{$\gamma$}{gamma}-polynomials}

We make a brief digression in our discussion. A current trend in matroid theory is that of proving various log-concavity results for polynomials that arise from Hodge theory. Two families of polynomials that are conjectured to be real-rooted are the Kazhdan--Lusztig and the $Z$-polynomial of a matroid \cite{gedeon-proudfoot-young}. The log-concavity for these two polynomials remains open even in very simple cases, though it is known that the $Z$-polynomial is palindromic and $\gamma$-positive \cite{ferroni-matherne-stevens-vecchi}. In recent work of Xie, Wu, and Zhang \cite{wu-xie-zhang} it was proved that for uniform matroids both the $Z$-polynomial and its associated $\gamma$-polynomial are ultra log-concave. We take the opportunity to show that there is a general implication between the ultra log-concavity of the  $\gamma$-polynomial of a symmetric polynomial and the ultra log-concavity of the polynomial itself (Theorem~\ref{thm:gamma-ulc}).

Following the notation of Section~\ref{sec:ultra-log-concavity}, let us denote $\mathbb{R}[x,y]_d$ the space of bivariate homogeneous polynomials of degree exactly $d$. We have the following lemma.

\begin{lemma}\label{lemma:gamma}
     The linear map $\alpha:\mathbb{R}[x,y]_{\lfloor d/2\rfloor} \to \mathbb{R}[x,y]_d$ defined by 
     \[x^i y^{\lfloor d/2\rfloor -i} \longmapsto (xy)^i(x+y)^{d-2i}\; \text{ for $i=0,\ldots,\lfloor d/2\rfloor$},\] 
     preserves stability and the Lorentzian property.
\end{lemma}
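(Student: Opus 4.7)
The plan is to combine a clean closed form for $\alpha$ with Theorem~\ref{thm:stable-preserves-lorentzian}. Write $m = \lfloor d/2 \rfloor$ and dehomogenize $p(x,y) \in \mathbb{R}[x,y]_m$ by setting $\tilde p(z) = p(z,1)$. Pulling a factor of $(x+y)^d$ out of $\alpha(p) = \sum_i a_i (xy)^i (x+y)^{d-2i}$ (where $p = \sum_i a_i x^i y^{m-i}$) yields the compact identity
\[
\alpha(p)(x,y) \;=\; (x+y)^d \, \tilde p\!\left(\frac{xy}{(x+y)^2}\right).
\]

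With this at hand, preservation of stability is direct. If $p$ is stable with nonnegative coefficients, then $\tilde p$ is real-rooted with nonnegative coefficients, hence all its zeros are nonpositive and it factors as $\tilde p(z) = c \prod_{i=1}^s (z+r_i)$ with $c,r_i \ge 0$ and $s \le m$. Substituting into the formula above and clearing a factor of $(x+y)^2$ from each of the $s$ terms gives
\[
\alpha(p)(x,y) \;=\; c\,(x+y)^{d-2s} \prod_{i=1}^s \bigl[xy + r_i(x+y)^2\bigr].
\]
Since $s \le m \le d/2$, the exponent $d-2s$ is a nonnegative integer and $(x+y)^{d-2s}$ is a product of stable linear forms. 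Each quadratic $xy + r(x+y)^2 = r x^2 + (2r+1)xy + r y^2$ has discriminant $(2r+1)^2 - 4r^2 = 4r+1 > 0$ when $r > 0$, so it is real-rooted with nonnegative coefficients; the degenerate case $r = 0$ gives $xy = x \cdot y$, again a product of stable linear forms. Hence $\alpha(p)$ is a product of bivariate homogeneous stable polynomials, and is therefore stable by Theorem~\ref{thm:products-and-changes-of-var}.

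Preservation of nonnegative coefficients is immediate from the definition, since each generator $\alpha(x^i y^{m-i}) = (xy)^i (x+y)^{d-2i}$ has nonnegative coefficients and $\alpha$ is linear. Finally, $\alpha$ is a homogeneous linear map (shifting total degree by the constant $d-m$), so Theorem~\ref{thm:stable-preserves-lorentzian} upgrades preservation of stability and nonnegative coefficients to preservation of the Lorentzian property.

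The main obstacle is spotting the factorization above; once the identity $\alpha(p) = (x+y)^d \tilde p(xy/(x+y)^2)$ is in hand, the Gal--Br\"and\'en-style quadratic building block $xy + r(x+y)^2$ emerges naturally and its stability reduces to a one-line discriminant calculation. The remaining steps are routine invocations of results already recorded in Section~\ref{sec:ultra-log-concavity}.
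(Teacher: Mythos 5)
Your identity $\alpha(p)(x,y)=(x+y)^d\,\tilde p\bigl(xy/(x+y)^2\bigr)$ is correct, and your treatment of the stability half is sound and even pleasantly elementary: factoring $\tilde p(z)=c\prod_i(z+r_i)$ and observing that each factor contributes $xy+r_i(x+y)^2$, a stable quadratic form, gives a direct proof that $\alpha$ maps $\S_2^{\lfloor d/2\rfloor}$ into $\S_2^d$ without invoking Lemma~\ref{transf}.

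The Lorentzian half, however, has a genuine gap. Theorem~\ref{thm:stable-preserves-lorentzian} (Br\"and\'en--Huh, Theorem~3.4) concerns homogeneous linear maps defined on a full space $\mathbb{R}_{\kappa}[x_1,\ldots,x_n]$, i.e.\ on all polynomials with bounded degree in each variable, and its hypothesis is that \emph{all} stable polynomials in that space (not only the homogeneous ones, and not only those with nonnegative coefficients) are mapped to stable polynomials or zero. Your $\alpha$ is defined only on the homogeneous slice $\mathbb{R}[x,y]_{\lfloor d/2\rfloor}$, which is not a space of the form $\mathbb{R}_\kappa[x,y]$, and you have verified stability preservation only for homogeneous inputs with nonnegative coefficients; so the theorem does not apply as stated. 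Nor is this a mere formality: the obvious repair, extending $\alpha$ to $\mathbb{R}_{(m,m)}[x,y]$ by sending all monomials $x^iy^j$ with $i+j\neq \lfloor d/2\rfloor$ to zero, yields a homogeneous, nonnegative map that does \emph{not} preserve stability, because homogeneous components of stable polynomials need not be stable --- e.g.\ $(x-1)(x+y+2)$ is stable, but its degree-one part $x-y$ is not. Hence the conclusion for the slice-defined $\alpha$ requires a separate argument. This is exactly what the paper supplies: it writes $\alpha(P)(x,y)=(x+y)^{\lceil d/2\rceil}P\bigl(\tfrac{xy}{x+y},\,x+y\bigr)$ (essentially your identity before dehomogenizing the second variable) and realizes $\alpha$ as a composition of operations each known to preserve both stability and the Lorentzian property: Lemma~\ref{transf} applied in the first variable (which is a legitimate box operator, so Theorem~\ref{thm:stable-preserves-lorentzian} does apply to it), a nonnegative change of variables $y\mapsto x+y$, and diagonalization, all covered by Theorem~\ref{thm:products-and-changes-of-var}. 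Replacing your final paragraph by this composition argument (keeping your stability computation if you like) closes the gap.
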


\begin{proof}
    Suppose $P(x,y) \in \mathbb{R}[x,y]_{\lfloor d/2\rfloor}$ is a Lorentzian (resp. stable) polynomial. Then 
    $$
        \alpha(P)(x,y) = (x+y)^{\lceil d/2\rceil} P\left(\tfrac{xy}{x+y},x+y\right).
    $$
Applying Lemma~\ref{transf} to the first variable, thereby introducing a new variable $z$, followed by change of variables $y\mapsto x+y$ and diagonalization $y=z$, we see that $\alpha(P)(x,y)$ is Lorentzian (resp. stable).
\end{proof}

\begin{theorem}\label{thm:gamma-ulc}
    Let $f(x)$ be a symmetric polynomial with center of symmetry $d/2$. If $\gamma_f(x)\in \ULC\left(\lfloor d/2\rfloor\right)$, then $f(x)\in \ULC(d)$.
\end{theorem}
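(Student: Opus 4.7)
The plan is to deduce the theorem directly from Lemma~\ref{lemma:gamma} together with the bivariate characterization of $\ULC$ recalled in Section~\ref{sec:ultra-log-concavity}, namely that a univariate polynomial $p$ with nonnegative coefficients lies in $\ULC(d)$ if and only if its degree-$d$ homogenization $\mathcal{H}_d(p)(x,y) = y^d p(x/y)$ is Lorentzian in $\mathbb{R}[x,y]_d$. The whole point is that the symmetric-basis expansion of $f$ is designed so that, after homogenization, it becomes exactly the image of the homogenization of $\gamma_f$ under the map $\alpha$ from Lemma~\ref{lemma:gamma}.

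First I would perform the explicit homogenization calculation. Writing $f(x) = \sum_{i=0}^{\lfloor d/2\rfloor} \gamma_i\, x^i(1+x)^{d-2i}$, a direct computation gives
\[
\mathcal{H}_d(f)(x,y) \;=\; y^d f(x/y) \;=\; \sum_{i=0}^{\lfloor d/2\rfloor} \gamma_i\, (xy)^i (x+y)^{d-2i},
\]
because each summand $y^d (x/y)^i (1+x/y)^{d-2i}$ simplifies to $x^i y^i (x+y)^{d-2i}$. On the other hand, the homogenization of the $\gamma$-polynomial to degree $\lfloor d/2 \rfloor$ is
\[
\mathcal{H}_{\lfloor d/2\rfloor}(\gamma_f)(x,y) \;=\; \sum_{i=0}^{\lfloor d/2\rfloor} \gamma_i\, x^i y^{\lfloor d/2\rfloor - i},
\]
and the map $\alpha$ from Lemma~\ref{lemma:gamma} sends each basis monomial $x^i y^{\lfloor d/2\rfloor - i}$ to $(xy)^i(x+y)^{d-2i}$. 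By linearity, this yields the key identity
\[
\mathcal{H}_d(f) \;=\; \alpha\bigl(\mathcal{H}_{\lfloor d/2 \rfloor}(\gamma_f)\bigr).
\]

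Once this identity is in hand the conclusion is immediate. The hypothesis $\gamma_f \in \ULC(\lfloor d/2\rfloor)$ means, by the bivariate characterization, that $\mathcal{H}_{\lfloor d/2 \rfloor}(\gamma_f)$ is Lorentzian. Lemma~\ref{lemma:gamma} asserts that $\alpha$ preserves the Lorentzian property, so $\mathcal{H}_d(f)$ is Lorentzian, and hence $f \in \ULC(d)$, again by the bivariate characterization.

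There is no substantive obstacle in this argument: the combinatorial content has been packaged into Lemma~\ref{lemma:gamma}, so all that remains is the monomial-by-monomial identification of $\mathcal{H}_d(f)$ with $\alpha(\mathcal{H}_{\lfloor d/2\rfloor}(\gamma_f))$. The only mild care to take is ensuring the exponents in $\alpha$ match the convention $d - 2i$ for both even and odd $d$, which works uniformly because $\alpha$ is defined on $\mathbb{R}[x,y]_{\lfloor d/2\rfloor}$ and produces polynomials of degree $d$ via the factor $(x+y)^{\lceil d/2\rceil}$ appearing implicitly in $(x+y)^{d-2i}$.
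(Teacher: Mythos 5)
Your proof is correct and follows essentially the same route as the paper: both reduce the statement to Lemma~\ref{lemma:gamma} by identifying the homogenization of $f$ with $\alpha$ applied to the homogenization of $\gamma_f$, and then invoke the equivalence between membership in $\ULC$ and the Lorentzian property of the bivariate homogenization. Your explicit monomial computation verifying $\mathcal{H}_d(f)=\alpha\bigl(\mathcal{H}_{\lfloor d/2\rfloor}(\gamma_f)\bigr)$ is exactly the step the paper leaves implicit.
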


\begin{proof}
    By assumption, the polynomial $\gamma_f(x)$ is ultra log-concave without internal zeros. This is equivalent to  $$P(x,y) := \sum_{j=0}^{\lfloor {d}/{2}\rfloor} \gamma_i x^iy^{\lfloor d/2\rfloor -i}$$
    being Lorentzian. In particular, using Lemma~\ref{lemma:gamma}, we obtain that $\alpha(P)(x,y)$ is Lorentzian as well, which in turn is equivalent to the ultra log-concavity without internal zeros of $f(x)$ as desired.
\end{proof}

\begin{example}\label{ex:noconversegammaulc}
    There is no reasonable converse for the preceding statement. Precisely, it is generally not true that if $\gamma_f(x)$ has nonnegative coefficients and $f(x)$ is ultra log-concave with no internal zeros, then $\gamma_f(x)$ is ultra log-concave. For instance, consider the ultra log-concave polynomial
    \[ f(x) = x^6 + 8x^5 + 24x^4 + 36x^3 + 24x^2 + 8x + 1.\]
    The reader may check that
    \[ \gamma_f(x) = 2x^3+x^2+2x+1.\]
    Though $f(x)$ is $\gamma$-positive and ultra log-concave, the polynomial $\gamma_f(x)$ is not even unimodal. 
\end{example}

\section{Symmetric decompositions}\label{sec:sym-dec}

\noindent In this section we consider properties of symmetric decompositions and their preservation under Hadamard products. In particular, we will show preservation of nonnegative, interlacing and $\gamma$-positive symmetric decomposition. As a byproduct, we will also show that the Hadamard product preserves the property of having no internal zeros.

As we shall see, the results can be stated and proved in an equivalent way in terms of $f$-polynomials and their diamond products; we begin by giving the necessary background on these concepts which can be found in~\cite{branden,branden-solus}.

In order to study properties of products $\mathscr{W}(pq)$ for polynomials $p,q$ we consider the \emph{subdivision operator}, defined as the bijective linear map $\mathscr{E}:\mathbb{R}[x]\to \mathbb{R}[x]$ defined by $\binom x j \longmapsto x^j$ for each $j \in \NN$. 

To a fixed polynomial $p\in \mathbb{R}[x]$ of degree $d$ we will frequently consider the polynomials $f,g\in \mathbb{R}[x]$ obtained by applying the operators $\mathscr{E}$ and $\mathscr{W}$ to $p$, that is,
    \[ f(x) := \mathscr{E}(p)(x) \qquad \text{ and } \qquad h(x) := \mathscr{W}(p)(x),\]
    and call $\mathscr{E}(p)(x)$ the \emph{$f$-polynomial} of $h=\mathscr{W}(p)(x)$ (with respect to $d=\deg p$), denoted $f=f(h;x)$. The two polynomials $f$ and $h$ are related via the M\"obius transformation
    \begin{equation}\label{eq:f-h-relation} 
    f(x) = (1+x)^{d}\, h\left(\frac{x}{1+x}\right) \, .
    \end{equation}
    Equivalently, if $h=\sum _{i=0}^d h_i t^i$ then $f(x)=\sum _{i=0}^d h_i \, x^i (x+1)^{d-i}$. Via this relation properties of $\mathscr{W}(p)(x)$ can be translated into properties of $\mathscr{E}(p)(x)$. In particular, the Hadamard product can be expressed in terms of the \emph{diamond product} of polynomials. For two polynomials $f,g\in \mathbb{R}[x]$, it is defined as
    \begin{equation}\label{eq:diamond-definition} 
    (f \diamond g)(x):= \sum_{j\geq 0} \frac{f^{(j)}(x)}{j!} \frac{g^{(j)}(x)}{j!}\, x^j\, (x+1)^j,
    \end{equation}
where the notation $f^{(j)}(x)$ stands for the $j$-th order derivative of $f$. By a result of Wagner \cite[Theorem~2.3]{wagner}, it follows that for any two polynomials $p,q$,
    \begin{equation}\label{eq:diamond-e}
        \mathscr{E}(pq) = \mathscr{E} (p)\diamond \mathscr{E} (q) .
    \end{equation} 

    As we shall see next, the notion of symmetric decomposition of a polynomial $h=\mathscr{W}(p)$ can be described via a corresponding decomposition of the polynomial $f=\mathscr{E}(p)$. 

    Recall that for any polynomial $h$ of degree at most $d$ there exist unique symmetric polynomials $a$ and $b$ such that $h(x)=a(x)+x\,b(x)$, $a=\mathcal{I}_d(a)$, and $b(x)=\mathcal{I}_{d-1}(b)$.  The pair $(a,b)$ is called the \emph{symmetric decomposition} or \emph{$\mathcal{I}_d$-decomposition} of $h$ .

    We will consider symmetric decompositions in terms of the $f$-polynomial. Following  Br\"and\'en and Solus~\cite{branden-solus}, for any polynomial $f$ of degree at most $d$ let
\[
\mathscr{R}_d( f )(x)=(-1)^d f(-x-1) \, .
\]
Equivalently, if $f=\sum _{i=0}^d h_i \, x^i (x+1)^{d-i}$, then $\mathscr{R}_d (f)=\sum _{i=0}^d h_{i} \, x^{d-i} (x+1)^{i}$. That is, $\mathscr{R}_d(f(h;x))=f(\mathcal{I}_d(h);x)$. Whenever $d$ is understood from context, we will drop the subscript and write $\mathscr{R}(p)$ instead. The map $\mathscr{R}$ interacts well with the subdivision operator, that is, $\mathscr{E}\circ \mathscr{R} = \mathscr{R}\circ \mathscr{E}$~\cite[Lemma~4.3]{branden-polya}, and consequently also with the diamond product: for all polynomials $f,g$ \[
\mathscr{R}(f\diamond g) = \mathscr{R}(f) \diamond \mathscr{R} (g) \, .
\]
Furthermore, the following two results will be used repeatedly in our proofs. 
\begin{lemma}{{\cite[Lemma~2.2]{branden-solus}}}\label{lem:Rformula}
    Let $f$ be a polynomial of degree at most $d$. Then there exist unique polynomials $\tilde{a}$ and $\tilde{b}$ of degree at most $d$ and $d-1$, respectively, such that $\mathscr{R}_d (\tilde{a})=\tilde{a}$, $\mathscr{R}(\tilde{b})=\tilde{b}$ and
    \[
    f \ = \ \tilde{a} + x\tilde{b} \, .
    \]
    Moreover, 
    \[
\tilde{a}=(x+1)f-x\mathscr{R}_d(f) \quad \text{and} \quad \tilde{b}=\mathscr{R}_d(f)-f \, .\]
\end{lemma}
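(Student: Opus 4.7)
The plan is to verify directly that the explicit polynomials $\tilde{a} = (x+1)f - x\mathscr{R}_d(f)$ and $\tilde{b} = \mathscr{R}_d(f) - f$ satisfy all three required properties (giving existence), and then to obtain uniqueness from a short computation using the same $\mathscr{R}$-operator.

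First I would check the degree bounds. Since $\mathscr{R}_d(f)(x) = (-1)^d f(-x-1)$, its leading coefficient (viewed as a polynomial of degree at most $d$) coincides with that of $f$, so the $x^{d+1}$ contributions in $(x+1)f$ and $x\mathscr{R}_d(f)$ cancel, yielding $\deg \tilde{a} \leq d$, and the $x^d$ contributions in $\mathscr{R}_d(f)$ and $f$ cancel, yielding $\deg \tilde{b} \leq d-1$. The identity $\tilde{a} + x\tilde{b} = f$ is then immediate from the definitions. To verify $\mathscr{R}_d(\tilde{a}) = \tilde{a}$ and $\mathscr{R}_{d-1}(\tilde{b}) = \tilde{b}$, I would use that $\mathscr{R}_d$ is an involution on $\mathbb{R}_{\leq d}[x]$, equivalently $\mathscr{R}_d(f)(-x-1) = (-1)^d f(x)$, and substitute $x \mapsto -x-1$ into the explicit formulas, tracking the signs $(-1)^d$ versus $(-1)^{d-1}$ carefully.

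For uniqueness, suppose $f = a_1 + xb_1 = a_2 + xb_2$ are two decompositions satisfying the required symmetry conditions. Setting $a := a_1 - a_2$ and $b := b_2 - b_1$, one has $a = xb$ with $\mathscr{R}_d(a) = a$ and $\mathscr{R}_{d-1}(b) = b$. The crucial auxiliary identity is
\[
    \mathscr{R}_d(xg)(x) \ = \ (x+1)\,\mathscr{R}_{d-1}(g)(x) \quad \text{for every $g \in \mathbb{R}_{\leq d-1}[x]$,}
\]
which is a direct computation from $(-1)^d(-x-1)\,g(-x-1)$. Applying $\mathscr{R}_d$ to $a = xb$ then gives $a = (x+1)b$; combining with $a = xb$ forces $b = 0$ and hence $a = 0$.

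The only real obstacle I anticipate is the bookkeeping of indices: whether a given application of the $\mathscr{R}$-operator is $\mathscr{R}_d$ or $\mathscr{R}_{d-1}$, and how the parity of $d$ interacts with the sign $(-1)^d$ appearing in its definition. The computations themselves are short, but sign or index errors would propagate through the entire argument, so each step should be written with the correct subscript displayed explicitly.
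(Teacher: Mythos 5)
Your proposal is correct: the paper itself only cites this lemma (to Br\"and\'en--Solus) without reproducing a proof, and your direct verification is the natural argument. The existence part checks out exactly as you outline (the degree-$d$ coefficients of $f$ and $\mathscr{R}_d(f)$ agree, so the top-degree terms cancel, and the symmetry of $\tilde{a}$, $\tilde{b}$ follows from $\mathscr{R}_d(f)(-x-1)=(-1)^d f(x)$), and your uniqueness argument via the identity $\mathscr{R}_d(xg)=(x+1)\mathscr{R}_{d-1}(g)$, which forces $xb=(x+1)b$ and hence $b=0$, is valid with the subscripts exactly as you display them.
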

The pair $(\tilde{a},\tilde{b})$ in the previous lemma is called the \emph{(symmetric) $\mathscr{R}_d$-decomposition} of the polynomial $f$. The next lemma clarifies the relation between the $\mathcal{I}_d$-decomposition and the $\mathcal{R}_d$-decomposition.
\begin{lemma}{{\cite[Lemma~2.3]{branden-solus}}}\label{lem:IdRd}
    Let $h$ be a polynomial of degree at most $d$ and let $f=f(h;x)$ be its $f$-polynomial (with respect to $d$). Let $(a,b)$ be the $\mathcal{I}_d$-decomposition of $h$ and $(\tilde{a},\tilde{b})$ be the $\mathscr{R}_d$-decomposition of $f$. Then
    \[
    \tilde{a}=(x+1)^d a\left( \frac{x}{x+1}\right) \quad \text{and} \quad  \tilde{b}=(x+1)^{d-1} b\left( \frac{x}{x+1}\right)\, .
    \]
    That is, $\tilde{a}=f(a;x)$ and $\tilde{b}=f(b;x)$ are the $f$-polynomials of $a$ and $b$ (with respect to $d$, respectively, $d-1$).
\end{lemma}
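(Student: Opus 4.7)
The plan is to exploit the uniqueness of the $\mathscr{R}_d$-decomposition from Lemma~\ref{lem:Rformula}. Starting from the $\mathcal{I}_d$-decomposition $h=a+xb$, I would substitute this directly into the defining M\"obius transformation
\[
f(h;x) \ = \ (x+1)^d\, h\!\left(\tfrac{x}{x+1}\right),
\]
which yields
\[
f(h;x) \ = \ (x+1)^d a\!\left(\tfrac{x}{x+1}\right) + x\cdot (x+1)^{d-1} b\!\left(\tfrac{x}{x+1}\right).
\]
Define the candidates $\tilde{a}':=(x+1)^d a(x/(x+1))=f(a;x)$ and $\tilde{b}':=(x+1)^{d-1} b(x/(x+1))=f(b;x)$, so that $f=\tilde{a}'+x\tilde{b}'$, with $\deg \tilde{a}' \leq d$ and $\deg \tilde{b}' \leq d-1$.

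Next I would verify that these candidates are symmetric in the sense required by Lemma~\ref{lem:Rformula}. The key tool here is the compatibility property between $\mathscr{R}_d$ and the $f$-polynomial construction already recorded in the excerpt: $\mathscr{R}_d(f(h;x))=f(\mathcal{I}_d(h);x)$. Applying this with $h$ replaced by $a$ gives
\[
\mathscr{R}_d(\tilde{a}') \ = \ \mathscr{R}_d(f(a;x)) \ = \ f(\mathcal{I}_d(a);x) \ = \ f(a;x) \ = \ \tilde{a}',
\]
since $a=\mathcal{I}_d(a)$. An entirely analogous computation with $d$ replaced by $d-1$ and $h$ replaced by $b$ yields $\mathscr{R}_{d-1}(\tilde{b}')=\tilde{b}'$, using $b=\mathcal{I}_{d-1}(b)$.

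Finally, by the uniqueness part of Lemma~\ref{lem:Rformula}, the decomposition $f=\tilde{a}'+x\tilde{b}'$ with $\mathscr{R}_d(\tilde{a}')=\tilde{a}'$ and $\mathscr{R}_{d-1}(\tilde{b}')=\tilde{b}'$ must coincide with the $\mathscr{R}_d$-decomposition $(\tilde{a},\tilde{b})$, giving the claimed formulas. The only mildly delicate point is to keep track of the two different homogenization degrees ($d$ for $a$ and $d-1$ for $b$), and to invoke the compatibility identity with the correct subscript in each case; this is the step that requires the most care but is essentially bookkeeping once the identity $\mathscr{R}_d\circ f(\cdot\,;x)=f(\mathcal{I}_d(\cdot)\,;x)$ is in hand.
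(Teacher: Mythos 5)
Your argument is correct: substituting the $\mathcal{I}_d$-decomposition $h=a+xb$ into $f(h;x)=(x+1)^d h\left(\tfrac{x}{x+1}\right)$, checking $\mathscr{R}_d$- resp.\ $\mathscr{R}_{d-1}$-symmetry of the two pieces via the identity $\mathscr{R}_d(f(h;x))=f(\mathcal{I}_d(h);x)$ (which holds for every degree, so applying it with $d-1$ for $b$ is legitimate), and then invoking the uniqueness in Lemma~\ref{lem:Rformula} is exactly the natural proof. The paper itself gives no proof of this lemma but simply cites \cite[Lemma~2.3]{branden-solus}; your self-contained argument uses only facts already recorded in the text and is the standard route, so there is nothing to correct.
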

\subsection{Nonnegative symmetric decompositions}
The previous lemmas allow us now to translate properties of $\mathscr{W}(p)=h(x)$ into properties of its corresponding $f$-polynomial
$f(h;x)$.

To that end, we will consider $f(h;x)$ expanded in the basis $\{x^i(x+1)^{d-i}\}_{i=0}^d$ which is sometimes referred to as the \emph{magic basis}.
Accordingly, a polynomial $f(x)$ of degree $d$ is called \emph{magic positive} if it can be written as
\[
f(x)=\sum _{i=0}^d h_i \, x^i (x+1)^{d-i},
\]
where $h_0,h_1,\ldots, h_d \geq 0$. In this case we call the set $I=\{i\in [d]\colon h_i>0\}$ the \emph{magic support}, denoted $\msupp(f)$. Notice that by definition we have
 \[ \text{$\mathscr{W}(p)$ has nonnegative coefficients} \iff \text{$\mathscr{E}(p)$ is magic positive}\]
 and $\supp \mathscr{W}(p)=\msupp \mathscr{E}(p)$.
 
The symmetric decomposition $(a,b)$ of $h=\mathcal{W}(p)$ is called \emph{nonnegative} if both $a$ and $b$ have only nonnegative coefficients. An immediate consequence of Lemma~\ref{lem:IdRd} is that $h=\mathscr{W}(p)$ has a nonnegative symmetric decomposition if and only if the $f$-polynomial $f(h;x) =\mathscr{E}(p)$ has a magic positive $\mathscr{R}_d$-decomposition $(\tilde{a},\tilde{b})$, i.e., if both $\tilde{a}$ and $\tilde{b}$ are magic positive. As the following lemma shows, the property of magic positivity is preserved under basic operations. Observe that we recover the preservation of nonnegative coefficients under the Hadamard product which was previously shown in~\cite[Proposition 5.1 (i)]{fischer-kubitzke}.

\begin{lemma}\label{lem:magiclemma}
    Let $f,g$ and $h$ be magic positive. Then
    \begin{enumerate}[\normalfont(i)]
        \item \label{it:magic-i} $f'$ is magic positive.
        \item \label{it:magic-ii} $fg$ is magic positive.
        \item \label{it:magic-iii} $f+g$ is magic positive whenever $\deg f=\deg g$.
        \item \label{it:magic-iv} $f\diamond g$ is magic positive.
                \item \label{it:magic-v} $\msupp (f\diamond h) \subseteq \msupp (g\diamond h)$ whenever $\msupp f \subseteq \msupp g$ and $\deg f=\deg g$.
    \end{enumerate}
    In particular, if $\mathscr{W}(p)$ and $\mathscr{W}(q)$ have nonnegative coefficients then so does $\mathscr{W}(pq)$. In this case, the support of $\mathscr{W}(pq)$ only depends on the support of $\mathscr{W}(p)$ and $\mathscr{W}(q)$.
\end{lemma}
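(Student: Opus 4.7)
The plan is to reduce everything to the behavior of a single magic basis element $x^i(x+1)^{d-i}$ under the elementary operations, and then to lift to (iv) and (v) by bilinearity. Once (i)--(iii) are available, the remaining parts are essentially bookkeeping.

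First, for (i), I would simply apply the product rule to the basis element and obtain $i\,x^{i-1}(x+1)^{(d-1)-(i-1)}+(d-i)\,x^i(x+1)^{(d-1)-i}$, a nonnegative combination of magic basis elements of degree $d-1$; extending by linearity handles a general magic positive $f$. For (ii), the product of two magic basis elements is itself a single magic basis element: $x^i(x+1)^{d-i}\cdot x^j(x+1)^{e-j}=x^{i+j}(x+1)^{(d+e)-(i+j)}$, and again bilinearity does the rest. Part (iii) is immediate since magic positivity is a coefficient condition in the fixed basis of the common degree.

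For (iv), I plan to plug directly into the definition \eqref{eq:diamond-definition}. Iterating (i), each $f^{(k)}$ is magic positive of degree $d-k$ and similarly $g^{(k)}$ of degree $e-k$; moreover, the factor $x^k(x+1)^k$ is itself one magic basis element of degree $2k$. By (ii) each summand $\frac{f^{(k)}(x)}{k!}\,\frac{g^{(k)}(x)}{k!}\,x^k(x+1)^k$ is therefore magic positive of degree $d+e$, and (iii) combines them into a single magic positive polynomial. For (v), I would write $f=\sum_{i\in \msupp f}\alpha_i\, x^i(x+1)^{d-i}$ and $g=\sum_{j\in \msupp g}\beta_j\, x^j(x+1)^{d-j}$ with all $\alpha_i,\beta_j>0$. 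By bilinearity of $\diamond$ and by (iv), $f\diamond h$ is a sum of positive multiples of the magic positive polynomials $x^i(x+1)^{d-i}\diamond h$ for $i\in\msupp f$; since no cancellation can happen when adding magic positive polynomials, $\msupp(f\diamond h)=\bigcup_{i\in\msupp f}\msupp\bigl(x^i(x+1)^{d-i}\diamond h\bigr)$. The analogous identity for $g\diamond h$ ranges over the larger index set $\msupp g\supseteq \msupp f$, giving the claimed inclusion.

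The final ``in particular'' assertion then follows at once: the nonnegativity of the coefficients of $\mathscr{W}(pq)$ comes from (iv) together with \eqref{eq:diamond-e} and the correspondence between magic positive $f$-polynomials and $\mathscr{W}$-polynomials with nonnegative coefficients; the statement about supports comes from applying (v) in both directions (swapping the roles of $f$ and $g$, using that $\diamond$ is symmetric in its two arguments) to conclude that $\msupp\mathscr{E}(pq)$ is determined by $\msupp\mathscr{E}(p)$ and $\msupp\mathscr{E}(q)$. I do not anticipate any genuine obstacle here; the only step that warrants care is the no-cancellation observation in (v), which hinges on the fact that all the $\alpha_i$ and all the coefficients of each $x^i(x+1)^{d-i}\diamond h$ carry the same (nonnegative) sign.
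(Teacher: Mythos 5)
Your proposal is correct and follows essentially the same route as the paper: prove (i)--(iii) by reducing to magic basis elements, deduce (iv) directly from the definition of the diamond product using (i)--(iii), and get (v) and the Hadamard statements from bilinearity together with the fact that magic supports of magic positive polynomials of equal degree combine without cancellation. The only cosmetic difference is in (v), where the paper writes $\lambda g = f + p$ with $p$ magic positive rather than decomposing $f$ and $g$ into basis elements, but the underlying no-cancellation argument is the same.
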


\begin{proof}
    For \ref{it:magic-i} it suffices to prove the claim for $f(x)=x^i\,(x+1)^{d-i}$, for $i=0,\ldots, d$:
    \[
    \frac{d}{dx} (x^i\,(x+1)^{d-i})=ix^{i-1}(x+1)^{d-i}+(d-i)x^i \,(x+1)^{d-i-1}
    \]
    The proofs of \ref{it:magic-ii} and \ref{it:magic-iii} are  immediate, while \ref{it:magic-iv} follows from \ref{it:magic-i}, \ref{it:magic-ii}, \ref{it:magic-iii}, and the definition of the diamond product. In particular, by~\ref{it:magic-iv} and equation~\eqref{eq:diamond-e}, the Hadamard product preserves nonnegative coefficients. 

    For~\ref{it:magic-v} we observe that since $\msupp f$ is contained in $\msupp g$ and both $f$ and $g$ are magic positive, there exists $\lambda >0$ and a magic positive polynomial $p$ of degree $\deg p=\deg f$ such that $f+p=\lambda g$. It follows that
    \[
    \msupp (g\diamond h)=\msupp (\lambda (g\diamond h))=\msupp (f\diamond h)\cup \msupp (p\diamond h)
    \]
    where the last step follows from the bilinearity of the diamond product and since both $f\diamond g$ and $p\diamond h$ are magic positive by~\ref{it:magic-iv}. This proves~\ref{it:magic-v}. It follows that if $\msupp f=\msupp g$ then also $\msupp (f\diamond h)=\msupp (f\diamond h)$. That is, the magic support of the diamond product only depends on the magic support of its factors (and not on the actual polynomials), and thus the same is true for the support of the Hadamard product.
\end{proof}

From the preceding lemma it follows that the property of having no internal zeros is preserved by the Hadamard product.

\begin{proposition}\label{prop:no-int-zeros}
        Let $p,q$ be polynomials such that both $\mathscr{W}(p)$ and $\mathscr{W}(q)$ have only nonnegative coefficients and no internal zeros. Then $\mathscr{W}(pq)$ has also no internal zeros.
\end{proposition}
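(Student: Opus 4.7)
The plan is to reduce the claim to an application of Wagner's Theorem (Theorem~\ref{thm:wagner-main}) by exploiting the observation, recorded at the end of Lemma~\ref{lem:magiclemma}, that the support of $\mathscr{W}(pq)$ is determined by the supports of $\mathscr{W}(p)$ and $\mathscr{W}(q)$ together with the degrees of $p$ and $q$. The starting point is that a polynomial with nonnegative coefficients has no internal zeros precisely when its support is a contiguous interval of integers, so the proposition is equivalent to showing that contiguity of the support is preserved under Hadamard products.

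Concretely, I would write $\supp \mathscr{W}(p) = \{a, a+1, \ldots, b\}$ with $b \leq d := \deg p$, and similarly $\supp \mathscr{W}(q) = \{c, c+1, \ldots, e\}$ with $e \leq \deg q$. Next, introduce the real-rooted polynomials
\[
\tilde h(x) := x^a(x+1)^{b-a} \quad \text{and} \quad \tilde k(x) := x^c(x+1)^{e-c},
\]
whose roots $0$ and $-1$ are all nonpositive and whose supports coincide with $\supp \mathscr{W}(p)$ and $\supp \mathscr{W}(q)$ respectively. Let $\tilde p$ and $\tilde q$ be polynomials of the same degrees as $p$ and $q$ with $\mathscr{W}(\tilde p) = \tilde h$ and $\mathscr{W}(\tilde q) = \tilde k$; these exist via the explicit formula $\tilde p(x) = \sum_{i=a}^{b} \binom{b-a}{i-a}\binom{x + d - i}{d}$ (and analogously for $\tilde q$), and a brief check of the leading coefficient — which equals $2^{b-a}/d!$ — confirms that $\deg \tilde p = d$.

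By Wagner's Theorem, $\mathscr{W}(\tilde p \tilde q)$ is real-rooted with only nonpositive roots, hence has nonnegative coefficients and no internal zeros. On the other hand, $\mathscr{E}(p)$ and $\mathscr{E}(\tilde p)$ are magic-positive polynomials of the same degree sharing the same magic support (and similarly for $q$, $\tilde q$), so the final assertion of Lemma~\ref{lem:magiclemma} yields
\[
\supp \mathscr{W}(pq) = \supp \mathscr{W}(\tilde p \tilde q).
\]
The right-hand side is a contiguous interval, and therefore $\mathscr{W}(pq)$ has no internal zeros. I anticipate no substantial obstacle in this argument; the crucial insight is that Lemma~\ref{lem:magiclemma}\ref{it:magic-v} reduces the question to one about any convenient pair of polynomials sharing the relevant supports and degrees, and the choice of real-rooted representatives of the form $x^a(x+1)^{b-a}$ makes Wagner's Theorem directly applicable.
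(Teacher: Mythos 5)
Your proposal is correct and follows essentially the same route as the paper's own proof: both reduce via Lemma~\ref{lem:magiclemma} (the fact that $\supp\mathscr{W}(pq)$ depends only on the supports, with matching degrees) to the case where the numerators are of the form $x^{a}(x+1)^{b-a}$, and then invoke Wagner's Theorem~\ref{thm:wagner-main} to conclude the resulting numerator is real-rooted with nonpositive zeros and hence has no internal zeros. Your extra care in constructing $\tilde p,\tilde q$ of the same degrees as $p,q$ and checking the leading coefficient is a welcome detail the paper leaves implicit, but it is not a different argument.
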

\begin{proof}
    Since $\mathscr{W}(p)$ and $\mathscr{W}(q)$ have no internal zeros there are indices $0\leq i_0, i_1, j_0, j_1$ such that 
    \begin{eqnarray*}
        \supp \mathscr{W}(p) &=& \supp ( x^{i_0}(x+1)^{i_1})\\
        \supp \mathscr{W}(q) & = & \supp (x^{j_0}(x+1)^{j_1}) \, .
    \end{eqnarray*}
    By Lemma~\ref{lem:magiclemma}, the support of the Hadamard product $\mathscr{W}(pq)$ is equal to the support of the Hadamard product of $x^{i_0}(x+1)^{i_1}$ and $x^{j_0}(x+1)^{j_1}$.
    As a consequence of Theorem~\ref{thm:wagner-main}, the latter has no internal zeros.
\end{proof}

Next, we show that the property of having a nonnegative symmetric decomposition is preserved by the Hadamard product. For that we also employ the following lemma.

\begin{lemma}\label{lem:defect1}
    Suppose $\tilde{b}_1, \tilde{b}_2$ are polynomials of degree $d_1-1, d_2-2$ such that $\mathscr{R}_{d_1-1}(\tilde{b}_1)=\tilde{b}_1$ and $\mathscr{R}_{d_2-1}(\tilde{b}_2)=\tilde{b}_2$. 
    
    Then, there exists a unique polynomial $\ell(x)$ such that $\mathscr{R}_{d_1+d_2-1}(\ell) =\ell$ and 
    \begin{eqnarray*}
        (x+1)\tilde{b}_1\diamond (x+1)\tilde{b}_2 &=& (x+1)\ell (x) \, , \quad \text{and}\\
        x\tilde{b}_1\diamond x\tilde{b}_2 &=& x\ell (x)\, .
        \end{eqnarray*}
        Moreover if $\tilde{b}_1$ and $\tilde{b}_2$ are magic positive, then so is $\ell(x)$.
\end{lemma}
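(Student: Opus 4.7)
The plan is to introduce the auxiliary polynomials $B_i := (x+1)\tilde{b}_i$ and $A_i := x\,\tilde{b}_i$ of degree $d_i$ and to study $L := B_1 \diamond B_2$ and $M := A_1 \diamond A_2$ in parallel. The symmetry hypothesis $\mathscr{R}_{d_i-1}(\tilde{b}_i) = \tilde{b}_i$ rewrites as $\tilde{b}_i(-x-1) = (-1)^{d_i-1}\tilde{b}_i(x)$, from which a direct calculation gives $\mathscr{R}_{d_i}(B_i) = A_i$; hence $\mathscr{R}_{d_1+d_2}(L) = M$ by multiplicativity of $\mathscr{R}$ under the diamond product. From the defining formula~\eqref{eq:diamond-definition}, evaluating at $x=-1$ (respectively $x=0$) annihilates all terms with $j \geq 1$ via the factor $x^j(x+1)^j$, and the surviving $j=0$ terms are $B_1(-1)B_2(-1)=0$ and $A_1(0)A_2(0)=0$. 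Consequently $L(-1)=0$ and $M(0)=0$, so I can write $L=(x+1)\ell_L$ and $M=x\,\ell_M$ uniquely for some polynomials $\ell_L,\ell_M$ of degree at most $d_1+d_2-1$.

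The crux is to establish the purely algebraic identity
\[
x\bigl((x+1)\tilde{b}_1 \diamond (x+1)\tilde{b}_2\bigr) \ = \ (x+1)\bigl(x\,\tilde{b}_1 \diamond x\,\tilde{b}_2\bigr),
\]
valid for arbitrary polynomials $\tilde{b}_1,\tilde{b}_2$ (no symmetry needed) and forcing $\ell_L=\ell_M=:\ell$. To prove it I would apply Leibniz's rule to expand $B_i^{(j)}=(x+1)\tilde{b}_i^{(j)}+j\,\tilde{b}_i^{(j-1)}$ and $A_i^{(j)}=x\,\tilde{b}_i^{(j)}+j\,\tilde{b}_i^{(j-1)}$, and then multiply out $B_1^{(j)}B_2^{(j)}$ and $A_1^{(j)}A_2^{(j)}$. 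In the combination $(x+1)M - xL$, the middle cross-terms $\tilde{b}_1^{(j)}\tilde{b}_2^{(j-1)}+\tilde{b}_1^{(j-1)}\tilde{b}_2^{(j)}$ drop out since $(x+1)xj - x(x+1)j = 0$, while the $\tilde{b}_1^{(j)}\tilde{b}_2^{(j)}$ and $\tilde{b}_1^{(j-1)}\tilde{b}_2^{(j-1)}$ contributions combine into two sums that cancel after the reindexing $j\mapsto j+1$ together with the identity $j^2/(j!)^2 = 1/((j-1)!)^2$.

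Once $\ell_L=\ell_M=\ell$ is in hand, substituting $L=(x+1)\ell$ into the relation $\mathscr{R}_{d_1+d_2}(L) = M = x\,\ell$ and simplifying $(-1)^{d_1+d_2}(-x)\ell(-x-1) = x\,\ell(x)$ reduces to $\mathscr{R}_{d_1+d_2-1}(\ell) = \ell$, giving the required symmetry. Uniqueness of $\ell$ is immediate from $L=(x+1)\ell$. For magic positivity: if $\tilde{b}_i$ is magic positive then so is $B_i$, and hence $L$ by Lemma~\ref{lem:magiclemma}\ref{it:magic-iv}; since $L(-1)=0$ forces the top magic coefficient of $L$ (with respect to degree $d_1+d_2$) to vanish, the quotient $\ell = L/(x+1)$ has a nonnegative magic expansion with respect to degree $d_1+d_2-1$.

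The only step demanding real computation is the Leibniz-rule identity $xL = (x+1)M$; every other step reduces to formal manipulation of previously established properties of $\mathscr{R}$, the diamond product, and the magic basis.
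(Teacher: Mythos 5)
Your proof is correct, but it takes a different route from the paper's. The paper pulls everything back through $\mathscr{E}$: it writes $(x+1)\tilde{b}_i=\mathscr{E}(p_i)$, observes that the hypothesis $\mathscr{R}_{d_i-1}(\tilde{b}_i)=\tilde{b}_i$ says exactly that $\mathscr{W}(p_i)$ is symmetric of defect $1$, invokes Theorem~\ref{thm:gamma-positivity} to conclude that $\mathscr{W}(p_1p_2)$ is again symmetric of defect $1$ (which yields the factorization $(x+1)\tilde{b}_1\diamond(x+1)\tilde{b}_2=(x+1)\ell$ with $\mathscr{R}(\ell)=\ell$), and then obtains the second identity by applying $\mathscr{R}$ and using $\mathscr{R}\big((x+1)\tilde{b}_i\big)=x\tilde{b}_i$ together with $\mathscr{R}(f\diamond g)=\mathscr{R}(f)\diamond\mathscr{R}(g)$; magic positivity of $\ell$ is read off from $x\ell=x\tilde{b}_1\diamond x\tilde{b}_2$ via Lemma~\ref{lem:magiclemma}. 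You instead stay entirely on the diamond-product side: your key step is the identity $x\big((x+1)\tilde{b}_1\diamond(x+1)\tilde{b}_2\big)=(x+1)\big(x\tilde{b}_1\diamond x\tilde{b}_2\big)$, proved by a direct Leibniz expansion in the definition~\eqref{eq:diamond-definition} (I checked the cancellation: the cross terms vanish and the two remaining sums cancel after $j\mapsto j+1$ using $j^2/(j!)^2=1/((j-1)!)^2$), combined with the evaluations at $x=-1$ and $x=0$ to get divisibility by $(x+1)$ and $x$, and with $\mathscr{R}_{d_i}\big((x+1)\tilde{b}_i\big)=x\tilde{b}_i$ plus multiplicativity of $\mathscr{R}$ under $\diamond$ to get the symmetry of $\ell$. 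What your approach buys is self-containedness and extra generality: the crucial identity needs no symmetry hypothesis at all and produces both displayed equations from a single computation, without appealing to Theorem~\ref{thm:gamma-positivity} or the reciprocity Lemma~\ref{lem:functionaleq}; what the paper's approach buys is brevity (given that Theorem~\ref{thm:gamma-positivity} is already available) and a structural explanation of the lemma as the defect-$1$ case of symmetry preservation. Your magic-positivity argument (top magic coefficient of $(x+1)\tilde{b}_1\diamond(x+1)\tilde{b}_2$ vanishes because the polynomial vanishes at $-1$, so dividing by $x+1$ keeps nonnegativity in the degree-$(d_1+d_2-1)$ magic basis) is also valid, and differs only cosmetically from the paper's use of $x\ell$.
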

\begin{proof}
    Let $p_i$ be a polynomial of degree $d_i$ such that $\mathscr{E}(p_i)=(x+1)\tilde{b}_i$, for $i=1,2$. Write 
\[
        \mathscr{W}(p_i) = \sum _{j=0}^{d_i} h_j^i x^j\, , \quad i=1,2 \, .
\]
Then $(x+1)\tilde{b}_i=\sum _{j=0}^{d_i} h_j^i x^j(x+1)^{d_i-j}$. Hence $h_{d_i}^i=0$ and 
\[
\tilde{b}_i = \sum _{j=0}^{d_i-1} h_j^i x^j(x+1)^{d_i-1-j}\, .
\]
Since $\mathscr{R}_{d_i-1}(\tilde{b}_i)=\tilde{b}_i$, it follows that  $h_j^i=h_{d_i-1-j}^i$ for all $0\leq j\leq d_i-1$. That is, $\mathscr{W}(p_i)$ is symmetric with defect $\defect \mathscr{W}(p_i)=1$ for $i=1,2$. By Theorem~\ref{thm:gamma-positivity}, also $\mathscr{W}(p_1p_2)$ is symmetric with defect $\defect \mathscr{W}(p_1p_2)=1$. Thus, the corresponding $f$-polynomial is of the form 
\[
\mathscr{E}(p_1p_2)=(x+1)\tilde{b}_1\diamond (x+1)\tilde{b}_2=(x+1)\sum _{j=0}^{d_1+d_2-1}c_jx^j(x+1)^{d_1+d_2-1-j}
\]
with $c_j=c_{d_1+d_2-1-j}$ for $0\leq j \leq d_1+d_2-1$. Hence the first claim follows by setting $\ell = \sum _{j=0}^{d_1+d_2-1}c_jx^j(x+1)^{d_1+d_2-1-j}$. For the second claim we observe that
\[
x\ell = \mathscr{R}((x+1)\ell) = \mathscr{R}\left((x+1)\tilde{b}_1\right)\diamond \mathscr{R}\left((x+1)\tilde{b}_2\right) = x\tilde{b}_1\diamond x\tilde{b}_2
\]
where we used the compatibility of $\mathscr{R}$ with the diamond product and the $\mathscr{R}$-symmetry of $\tilde{b}_1,\tilde{b}_2$ and $\ell$. 

Moreover if $\tilde{b}_1$ and $\tilde{b}_2$ are magic positive,  then so is $x\tilde{b}_1\diamond x\tilde{b}_2=x\ell $ by Lemma~\ref{lem:magiclemma}. Hence so is $\ell$.
\end{proof}

\begin{proposition}\label{prop:nonnegativesymdecomp}
    Let $p,q$ be polynomials such that $\mathscr{W}(p)$ and $\mathscr{W}(q)$ have nonnegative symmetric $\mathcal{I}_{d_1}$- and $\mathcal{I}_{d_2}$-decompositions, respectively. Then $\mathscr{W}(pq)$ has a nonnegative symmetric $\mathcal{I}_{d_1+d_2}$-decomposition. 
\end{proposition}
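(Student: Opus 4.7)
The plan is to reformulate the statement in the language of $f$-polynomials and the diamond product, and then to compute the $\mathscr{R}_{d_1+d_2}$-decomposition of $\mathscr{E}(pq)$ explicitly. By Lemma~\ref{lem:IdRd}, the nonnegative $\mathcal{I}_{d_i}$-decomposition $(a_i,b_i)$ of $\mathscr{W}(p_i)$ corresponds to a magic positive $\mathscr{R}_{d_i}$-decomposition $(\tilde{a}_i,\tilde{b}_i)$ of $\mathscr{E}(p_i)$ (writing $p_1=p$, $p_2=q$), so it suffices to show that the $\mathscr{R}_{d_1+d_2}$-decomposition $(\tilde{A},\tilde{B})$ of
\[
F \;:=\; \mathscr{E}(pq) \;=\; \mathscr{E}(p)\diamond\mathscr{E}(q) \;=\; (\tilde{a}_1+x\tilde{b}_1)\diamond(\tilde{a}_2+x\tilde{b}_2)
\]
is magic positive; Lemma~\ref{lem:IdRd} then yields the desired conclusion.

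First I would compute $\tilde{B}=\mathscr{R}_{d_1+d_2}(F)-F$ from Lemma~\ref{lem:Rformula}. Expanding $\mathscr{R}_{d_1+d_2}(F)=(\tilde{a}_1+(x+1)\tilde{b}_1)\diamond(\tilde{a}_2+(x+1)\tilde{b}_2)$ and $F$ by bilinearity, the $\tilde{a}_1\diamond\tilde{a}_2$ terms cancel, the cross terms collapse to $\tilde{a}_1\diamond\tilde{b}_2+\tilde{b}_1\diamond\tilde{a}_2$, and Lemma~\ref{lem:defect1} supplies a common magic positive polynomial $\ell$ with $(x+1)\tilde{b}_1\diamond(x+1)\tilde{b}_2-x\tilde{b}_1\diamond x\tilde{b}_2=(x+1)\ell-x\ell=\ell$. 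Thus $\tilde{B}=\tilde{a}_1\diamond\tilde{b}_2+\tilde{b}_1\diamond\tilde{a}_2+\ell$ is magic positive by Lemma~\ref{lem:magiclemma}(iv) and Lemma~\ref{lem:defect1}. Substituting this formula into $\tilde{A}=F-x\tilde{B}$ and cancelling the $x\ell$ contributions produces
\[
\tilde{A} \;=\; \tilde{a}_1\diamond\tilde{a}_2 \;+\; \bigl[\tilde{a}_1\diamond(x\tilde{b}_2)-x(\tilde{a}_1\diamond\tilde{b}_2)\bigr] \;+\; \bigl[(x\tilde{b}_1)\diamond\tilde{a}_2-x(\tilde{b}_1\diamond\tilde{a}_2)\bigr].
\]

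The first summand is magic positive by Lemma~\ref{lem:magiclemma}(iv), so the main obstacle is to verify the two bracketed terms. Applying the Leibniz rule $(xg)^{(j)}=xg^{(j)}+jg^{(j-1)}$ inside the definition of $\diamond$ produces the identity
\[
f\diamond(xg)-x(f\diamond g) \;=\; x(x+1)\sum_{k\geq 0}\frac{f^{(k+1)}}{(k+1)!}\,\frac{g^{(k)}}{k!}\,x^k(x+1)^k,
\]
so since $x(x+1)$ is itself magic positive, it suffices to prove that the bilinear operation $(f,g)\longmapsto\sum_{k\geq 0}\frac{f^{(k+1)}}{(k+1)!}\frac{g^{(k)}}{k!}x^k(x+1)^k$ preserves magic positivity. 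By bilinearity this reduces to magic-basis monomials $f=x^i(x+1)^{m-i}$ and $g=x^j(x+1)^{n-j}$; expanding each derivative via Leibniz, collecting terms by the exponent $\ell$ of $x$ in the magic basis of the output, and applying Vandermonde's identity to the two resulting inner sums yields
\[
\sum_{\ell}\binom{i+n-j}{\ell-j}\binom{j+m-i}{\ell-i+1}\,x^\ell(x+1)^{m+n-1-\ell},
\]
which is manifestly magic positive. Hence both bracketed terms in the formula for $\tilde{A}$ are magic positive, making $\tilde{A}$ magic positive as well, and the proof is complete.
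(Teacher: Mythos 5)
Your proof is correct and follows essentially the same route as the paper: pass to $f$-polynomials via Lemmas~\ref{lem:Rformula} and~\ref{lem:IdRd}, compute $\tilde{B}=\mathscr{R}(F)-F$ and $\tilde{A}=F-x\tilde{B}$ using Lemma~\ref{lem:defect1} to collapse the $(x+1)\tilde{b}_1\diamond(x+1)\tilde{b}_2$ and $x\tilde{b}_1\diamond x\tilde{b}_2$ terms, and treat $f\diamond(xg)-x(f\diamond g)$ by the Leibniz expansion. The only divergence is at the final step, where the paper simply notes that each term of the Leibniz sum is magic positive of the same degree by the closure properties in Lemma~\ref{lem:magiclemma}, whereas you reduce to magic-basis monomials and verify an explicit (and correct) Vandermonde-type formula; both verifications are valid.
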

\begin{proof}
    We will show equivalently that if the $f$-polynomials $f_1,f_2$ have magic positive $\mathscr{R}_{d_1}$- and $\mathscr{R}_{d_2}$-decompositions, then $f_1\diamond f_2$ has a magic positive $\mathscr{R}_{d_1+d_2}$-decomposition. Let $(\tilde{a}_i,\tilde{b}_i)$ be the decomposition of $f_i$, $i=1,2$, and let $(\tilde{c},\tilde{d})$ be the decomposition of $f_1\diamond f_2$. Then by Lemma~\ref{lem:Rformula}
    \begin{eqnarray*}
    \tilde{d} &=& \mathscr{R}(f_1\diamond f_2)-f_1\diamond f_2 \\
    &=& \mathscr{R}(f_1) \diamond \mathscr{R}(f_2)-f_1\diamond f_2\\
    &=& (\tilde{a}_1+(x+1)\tilde{b}_1)\diamond (\tilde{a}_2+(x+1)\tilde{b}_2)-(\tilde{a}_1+x\tilde{b}_1)\diamond (\tilde{a}_2+x\tilde{b}_2)\\
    &=&  \tilde{b}_1\diamond \tilde{a}_2+ \tilde{a}_1 \diamond \tilde{b}_2 + ((x+1)\tilde{b}_1)\diamond ((x+1)\tilde{b}_2)-(x\tilde{b}_1)\diamond (x\tilde{b}_2) \, .\end{eqnarray*}
    The first two summands are magic positive of degree $d_1+d_2-1$ by Lemma~\ref{lem:magiclemma}. For the difference of the last summands, we observe that it is equal to a magic positive polynomial of degree $d_1+d_2-1$ by Lemma~\ref{lem:defect1}.
    Thus, $\tilde{d}$ is a sum of three magic positive terms of the same degree and thus itself magic positive.
        
    Analogously, for $\widetilde{c}$ we have:
    \begin{align*}
        \tilde{c}\enspace &= \enspace \tilde{f}_1\diamond \tilde{f}_2 - x \tilde{d} \\
        &=\enspace \tilde{a}_1\diamond \tilde{a}_2+(x\tilde{b}_1)\diamond \tilde{a}_2+\tilde{a}_1\diamond (x\tilde{b}_2)+(x\tilde{b}_1)\diamond (x\tilde{b}_2)\\
        &\qquad  -x\left(\tilde{b}_1\diamond \tilde{a}_2+\tilde{a}_1\diamond \tilde{b}_2+((x+1)\tilde{b}_1)\diamond ((x+1)\tilde{b}_2)-(x\tilde{b}_1)\diamond (x\tilde{b}_2)\right)\\
        &=\enspace\tilde{a}_1\diamond \tilde{a}_2+
        \left[\tilde{a}_1\diamond (x\tilde{b}_2)-x(\tilde{a}_1\diamond \tilde{b}_2)\right]+\left[(x\tilde{b}_1)\diamond \tilde{a}_2-x(\tilde{b}_1\diamond \tilde{a}_2)\right]\\
        &\qquad  +\left[(x\tilde{b}_1)\diamond (x\tilde{b}_2)-x\left(((x+1)\tilde{b}_1)\diamond ((x+1)\tilde{b}_2)-(x\tilde{b}_1)\diamond (x\tilde{b}_2)\right)\right]
    \end{align*}
    We will show that all four terms in the equation just above  are magic positive of the same degree $d_1+d_2$ (or zero). For the first one, that is, $\tilde{a}_1\diamond \tilde{a}_2$, this claim is immediate by Lemma~\ref{lem:magiclemma}. For the second term, we observe that
    \[
    \frac{d^k}{dx^k}(x\tilde{b}_2(x))=x\tilde{b}_2^{(k)}+k\tilde{b}_2^{(k-1)}
    \]
    and thus, by the definition of the diamond product,
    \[
    \tilde{a}_1\diamond x\tilde{b}_2=x(\tilde{a}_1\diamond \tilde{b}_2)+\sum _{k\geq 1}\frac{\tilde{a}_1^{(k)}}{k!}\frac{\tilde{b}_2^{(k-1)}}{(k-1)!}x^k(x+1)^{k}
    \]
    where the sum appearing on the right is itself magic positive because each term is (Lemma~\ref{lem:magiclemma}). Notice that differentiating and multiplying a magic positive polynomial by $x$ or $x+1$ preserves this property. This shows that the second term is magic positive, and similarly, one shows that the third term is. For the last summand, we again apply Lemma~\ref{lem:defect1} and have $((x+1)\tilde{b}_1)\diamond ((x+1)\tilde{b}_2)=(x+1)\ell (x)$ for a magic positive polynomial $\ell =\mathscr{R}(\ell)$. It follows that
    \[
    (x\tilde{b}_1)\diamond (x\tilde{b}_2)-x\left(((x+1)\tilde{b}_1)\diamond ((x+1)\tilde{b}_2)-(x\tilde{b}_1)\diamond (x\tilde{b}_2)\right)=x\ell -x\left( (x+1)\ell -x\ell\right)=0
    \]
    which is clearly also magic positive. This completes the proof.
\end{proof}

\subsection{Interlacing symmetric decompositions}


Our next goal is to show that nonnegative, interlacing symmetric decompositions are preserved under Hadamard products. Recall, that given two real-rooted polynomials
 $a$ and $b$ with zeros $s_k\leq \cdots \leq s_1$ and $t_m\leq \cdots \leq t_1$, respectively, we say that $b$ \emph{interlaces} $a$, and write $b\preceq a$, if
    \[ \cdots \leq t_2 \leq s_2\leq t_1 \leq s_1.\]
    In particular, we must have $\deg b \leq \deg a\leq \deg b +1$. The following is a collection of basic properties of interlacing polynomials, see, e.g.,~\cite{wagner}.
\begin{lemma}{{\cite[Section 3]{wagner}}}\label{lem:basicinterlacing}
    Let $f,g,h$ be real-rooted polynomials with nonnegative coefficients. Then
    \begin{enumerate}[\normalfont(i)]
        \item $f \preceq g$ if and only if $g\preceq xf$
        \item if $f\preceq h$ and $g\preceq h$ then $f+g \preceq h$
        \item if $f\preceq g$ and $f\preceq h$ then $f\preceq g+h$
    \end{enumerate}
\end{lemma}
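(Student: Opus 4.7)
The plan is to prove each of the three properties directly from the definition of interlacing by a sign analysis, exploiting the fact that polynomials with nonnegative coefficients have all their real zeros contained in $(-\infty, 0]$. Throughout I will assume positive leading coefficients, since the degenerate cases are trivial, and reduce to the case of simple, distinct zeros by a perturbation argument (interlacing is a closed condition in the space of polynomials of bounded degree with a fixed sign of leading coefficient).

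For (i), I observe that $xf$ has the zeros of $f$ together with an additional zero at $0$; since $f$ has nonnegative coefficients, its largest zero $s_1$ satisfies $s_1 \leq 0$, so $0$ becomes the largest zero of $xf$. Writing out the two chains of inequalities, $f \preceq g$ reads $\cdots \leq s_2 \leq t_2 \leq s_1 \leq t_1$, whereas $g \preceq xf$ reads $\cdots \leq s_2 \leq t_2 \leq s_1 \leq t_1 \leq 0$, and the final inequality $t_1 \leq 0$ is automatic for $g$ with nonnegative coefficients. Hence the two conditions are equivalent, once one checks that the degree conventions line up (namely, $\deg(xf) = \deg f + 1$ absorbs the shift needed on the interlacing side).

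For (ii), let $h$ have simple zeros $t_m < \cdots < t_1 \leq 0$. The hypothesis $f \preceq h$ implies that $f$ has exactly one zero in each open interval $(t_{i+1}, t_i)$, and counting the number of zeros of $f$ lying strictly to the right of $t_i$ (which equals $i-1$) together with the positivity of the leading coefficient yields $\operatorname{sgn} f(t_i) = (-1)^{i-1}$. The same computation applied to $g$ gives $\operatorname{sgn} g(t_i) = (-1)^{i-1}$, so $(f+g)(t_i)$ and $(f+g)(t_{i+1})$ have opposite signs and the intermediate value theorem produces a zero of $f+g$ in each $(t_{i+1}, t_i)$, accounting for $m-1$ zeros. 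If $\deg(f+g)=m$, then examining the sign of $(f+g)(t_m)$ against the sign at $-\infty$ (determined by the parity of $m$) yields one further zero in $(-\infty, t_m)$. Since $\deg(f+g) \leq \max(\deg f, \deg g) \leq \deg h$, this exhausts the zeros and gives $f+g \preceq h$. Item (iii) is entirely symmetric: the zeros of $f$ now play the role of the $t_i$, and on each of the intervals they determine, the sign patterns of $g$ and $h$ coincide (both interlace $f$ from the same side with positive leading coefficient), so $g+h$ inherits the required sign changes and hence the interlacing with $f$.

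The main technical obstacle is the bookkeeping of the several possible degree combinations (for instance, in (ii) whether $\deg f = \deg g$ or they differ by one, and whether $\deg h - \deg f \in \{0, 1\}$) and the handling of coincident zeros among the polynomials involved. Both are routine: the degree cases can be enumerated case by case, and coincident zeros are handled either by the perturbation argument mentioned above or by observing that the sign computation at $t_i$ only requires $t_i$ not to be a zero of $f$ or $g$, which can be arranged without loss of generality.
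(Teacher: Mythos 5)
Your argument is correct, and it is worth noting that the paper does not prove this lemma at all: it is quoted from Wagner's paper (Section 3), so any proof here is "extra." Your route — the classical sign-interlacing argument — is essentially the standard proof of these facts. The key computations check out: in (i) the chains for $f\preceq g$ and $g\preceq xf$ differ exactly by the inequality $t_1\leq 0$, which nonnegativity of the coefficients of $g$ supplies, and the degree windows ($\deg f\leq\deg g\leq\deg f+1$ versus $\deg g\leq\deg f+1\leq\deg g+1$) coincide; in (ii) the sign $(-1)^{i-1}$ of both $f$ and $g$ at the $i$-th zero of $h$ is what prevents cancellation (this is precisely where the common sign of the leading coefficients, i.e.\ nonnegativity, enters), and the count $m-1$ interior zeros plus possibly one in $(-\infty,t_m)$ does exhaust $\deg(f+g)\leq\deg h$; in (iii) the analogous sign is $(-1)^{i}$ at the zeros $u_i$ of $f$, and one should note explicitly that $(g+h)(u_1)<0$ together with the positive leading coefficient forces a zero of $g+h$ to the \emph{right} of $u_1$, which is needed for $f\preceq g+h$ — your phrase "the required sign changes" covers this, but it is the one step I would spell out. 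The deferred issues are indeed routine: the degree case analysis is finite, and coincident or multiple zeros can be removed by a simultaneous perturbation (e.g.\ spread the zeros of $h$ by distinct small shifts and clamp the zeros of $f,g$ into the resulting open intervals), after which one passes to the limit using that interlacing with fixed degrees and positive leading coefficients is a closed condition. So there is no gap; your write-up is a self-contained substitute for the external citation.
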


Following~\cite{branden-solus} we call the symmetric decomposition $(a,b)$ of a polynomial $h=\mathscr{W}(p)$ \emph{nonnegative} if both $a$ and $b$ have only nonnegative coefficients, and \emph{interlacing} if $b\preceq a$. Instead of working with the numerator polyomials directly, we will again consider the corresponding $f$-polynomial, $f(h;x)=\mathscr{E}(p)$. Crucially, by equation~\eqref{eq:f-h-relation}, $\mathscr{W}(p)$ and $\mathscr{E}(p)$ are related by a M\"obius transformation that preserves real-rootedness of polynomials. In particular, $h$ has zeros in $(-\infty,0]$, i.e. only nonnegative coefficients,  if and only if $f$ has zeros in $(-1,0]$. Moreover, the transformation preserves interlacing polynomials with zeros in these intervals. It follows that if $(\tilde{a},\tilde{b})$ is the $\mathscr{R}$-decomposition of $f(h;x)$ and the $\mathcal{I}_d$ composition $(a,b)$ of $h$ has only nonnegative coefficients then 
\[
a \ \preceq \ b \quad \text{if and only if} \quad \tilde{a} \ \preceq \ \tilde{b} \, .
\]
In this case, we say that $f$ has an interlacing $\mathscr{R}$-decomposition. Furthermore, by~\cite[Theorem~2.6]{branden-solus} this is equivalent to 
\[
\mathscr{R}(f) \ \preceq \ f  \, .
\]
We will use this equivalence in the proof of Theorem~\ref{thm:interlacingsymdec} below. Furthermore, we will employ the fact that the interlacing property behaves well with the diamond product, as well as a basic lemma on interlacing sequences of polynomials.
\begin{theorem}{{\cite[Theorem 12]{BrandenOperators}}}\label{thm:diamondinterlacing}
    Let $f,g$ and $h$ be real-rooted polynomials, and let $h$ have zeros in $[-1,0]$. Then $f\diamond h$ is real-rooted, and if $f\preceq g$ then
    \[
    f\diamond h \ \preceq \ g \diamond h \, .
    \]
\end{theorem}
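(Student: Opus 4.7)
The plan is to reduce the interlacing assertion to a real-rootedness statement via the Hermite--Biehler/Obreschkoff criterion, then translate through the $\mathscr{E}$--$\mathscr{W}$ correspondence to a Hadamard-product statement, and handle the latter with the stable-polynomial framework of Section~\ref{sec:ultra-log-concavity}.

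For the first reduction, fix $h$ and view $T_h\colon p\mapsto p\diamond h$ as a linear operator on polynomials. By the Hermite--Biehler criterion, $f\preceq g$ with positive leading coefficients is equivalent to $g(x)+\mathrm{i}f(x)\in\mathbb{C}[x]$ having all zeros in the closed upper half-plane, and hence to $\alpha f+\beta g$ being real-rooted for all $\alpha,\beta\in\mathbb{R}$. Since $\diamond$ is $\mathbb{R}$-bilinear, any such combination satisfies $(\alpha f+\beta g)\diamond h=\alpha(f\diamond h)+\beta(g\diamond h)$. If $T_h$ preserves real-rootedness, then all these combinations are real-rooted, which by Hermite--Biehler again (together with $T_h$ preserving degrees up to the additive constant $\deg h$ and preserving positivity of leading coefficients by a direct calculation) forces $T_h(f)\preceq T_h(g)$. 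Thus the interlacing half of the theorem reduces to the real-rootedness claim: $p\diamond h$ is real-rooted whenever $p$ is real-rooted and $h$ is real-rooted with zeros in $[-1,0]$.

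Since $\mathscr{E}$ is bijective, choose $u,v\in\mathbb{R}[x]$ with $\mathscr{E}(u)=p$ and $\mathscr{E}(v)=h$, so by~\eqref{eq:diamond-e} we have $p\diamond h=\mathscr{E}(uv)$. The M\"obius substitution $x\mapsto x/(1+x)$ underlying~\eqref{eq:f-h-relation} preserves real-rootedness and induces the correspondence: $\mathscr{E}(w)$ has zeros in $[-1,0]$ iff $\mathscr{W}(w)$ has zeros in $(-\infty,0]$. Thus the hypotheses translate to: $\mathscr{W}(u)$ is real-rooted and $\mathscr{W}(v)$ is a P\'olya frequency polynomial, and the task reduces to the following strengthening of Wagner's Theorem~\ref{thm:wagner-main}: the Hadamard product of a real-rooted polynomial with a PF polynomial is real-rooted. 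Setting $F=\mathcal{H}(\mathscr{W}(u))$ and $G=\mathcal{H}(\mathscr{W}(v))$, the polynomial $F$ is stable and $G$ lies in $\S_2^d$; by~\eqref{eq:homog}, $F\bullet G=\mathcal{H}(\mathscr{W}(uv))$, so it suffices to show $F\bullet G$ is stable. By Lemma~\ref{form}, $F\bullet G$ equals the coefficient of $z^{2b}$ in $r(x,y,z)=F(x+z,y+z)\,(x+z)^b(y+z)^b\,G\bigl(xz/(x+z),\,yz/(y+z)\bigr)$; the first factor is stable by Theorem~\ref{thm:products-and-changes-of-var} and the second by Lemma~\ref{transf}, so $r$ is stable and its $z^{2b}$-coefficient inherits stability.

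\textbf{Main obstacle.} The chief subtlety lies in the final stability-preservation step. Theorem~\ref{lorpres} is stated for pairs in $\S_2^d$ (stable \emph{with nonnegative coefficients}), whereas $F=\mathcal{H}(\mathscr{W}(u))$ may have mixed-sign coefficients since $\mathscr{W}(u)$ is only real-rooted, not necessarily PF. One must verify that each stability-preservation operation—change of variables, the fractional linear map of Lemma~\ref{transf}, products of stable polynomials, and coefficient extraction with respect to $z$—remains valid for merely stable $F$, without the nonneg-coefficient hypothesis. Each of these does preserve general stability via the standard Hurwitz-type arguments of Br\"and\'en--Huh, so the extension goes through with careful bookkeeping. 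Alternatively, one can sidestep this by appealing to a classical multiplier-sequence theorem (in the style of Craven--Csordas) asserting that Hadamard multiplication by a PF polynomial preserves real-rootedness of univariate polynomials, which gives the desired conclusion directly.
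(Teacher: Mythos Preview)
This theorem is quoted from \cite{BrandenOperators} and is not proved in the present paper, so there is no in-paper argument to compare against directly.

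Your approach has a genuine gap at the assertion ``the polynomial $F$ is stable.'' A homogeneous bivariate polynomial $F(x,y)=y^{a}P(x/y)$ is stable if and only if $P$ has no zeros in $(0,\infty)$: for $x,y\in H$ one has $x/y\in\mathbb{R}$ precisely when $x=ty$ with $t>0$, so any positive real zero of $P$ yields a zero of $F$ in $H\times H$ (e.g., $F(x,y)=x-2y$ vanishes at $(2i,i)$). The hypothesis of the theorem only says that $f=\mathscr{E}(u)$ is real-rooted, which under the M\"obius correspondence means $\mathscr{W}(u)$ is real-rooted with zeros allowed anywhere in $\mathbb{R}$; thus $F$ need not be stable. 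Once $F$ fails stability, so does $F(x+z,y+z)$---each linear factor $(1-\alpha)z+x-\alpha y$ has sign-mixed coefficients when $\alpha>0$---and hence the product $r(x,y,z)$ of Lemma~\ref{form} is not stable, so nothing follows about its $z^{2b}$-coefficient. Your ``Main obstacle'' paragraph misdiagnoses the issue as merely dropping the nonnegative-coefficient hypothesis from an otherwise stable $F$; the actual obstruction is that $F$ is not stable at all. The proposed fallback via multiplier sequences does not apply either, since those classical results concern the coefficient-wise product $\sum a_k b_k x^k$, not the $\bullet$-operation of Lemma~\ref{lemma:comb-identity}. Your argument \emph{does} go through verbatim in the special case where $f$ and $g$ themselves have all zeros in $[-1,0]$ (so that $F\in\S_2^a$), and that is in fact the only case used in the proof of Theorem~\ref{thm:interlacingsymdec}; but the full statement requires the operator-theoretic characterization of real-rootedness preservers developed in the cited reference.
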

\begin{lemma}{{\cite[Lemma 2.3]{branden-polya}}}\label{lem:endsinterlacing}
    If $f_0,f_1,\ldots, f_m$ are real rooted polynomials with $f_{i-1}\preceq f_{i}$ for $1\leq i\leq m$ and $f_0\preceq f_m$, then $f_i\preceq f_j$ whenever $i\leq j$.
\end{lemma}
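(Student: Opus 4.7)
The plan is to translate the interlacing relation $f_i \preceq f_j$ into a finite family of inequalities between individual zeros of $f_i$ and $f_j$, and to establish each such inequality by chaining the hypotheses. The starting observation is that $b \preceq a$ forces $\deg a - 1 \leq \deg b \leq \deg a$, so the degree sequence $d_i := \deg f_i$ is nondecreasing with unit jumps, and $d_m - d_0 \in \{0,1\}$ in view of $f_0 \preceq f_m$. Hence either all $f_i$ share a common degree $d$, or there is a single step $k$ at which the degree jumps from $d_0$ to $d_0 + 1$.

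First I would dispose of the equal-degree case. Writing the zeros of $f_i$ as $s_d^{(i)} \leq \cdots \leq s_1^{(i)}$, the consecutive interlacing $f_{i-1} \preceq f_i$ unpacks into
\begin{equation*}
s_\ell^{(i-1)} \leq s_\ell^{(i)} \leq s_{\ell-1}^{(i-1)}, \quad 1 \leq \ell \leq d,
\end{equation*}
(with the right inequality vacuous for $\ell = 1$). Transitively chaining the left-hand inequality along the sequence $0, 1, \ldots, m$ produces $s_\ell^{(i)} \leq s_\ell^{(j)}$ for $i \leq j$, which is one half of $f_i \preceq f_j$. For the other half, $s_\ell^{(j)} \leq s_{\ell-1}^{(i)}$, the hypothesis $f_0 \preceq f_m$ enters decisively: it supplies the single \emph{wrap-around} inequality $s_\ell^{(m)} \leq s_{\ell-1}^{(0)}$. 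Combining it with two further chains gives
\begin{equation*}
s_\ell^{(j)} \ \leq \ s_\ell^{(m)} \ \leq \ s_{\ell-1}^{(0)} \ \leq \ s_{\ell-1}^{(i)},
\end{equation*}
which is exactly what is required.

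The mixed-degree case $d_m = d_0 + 1$ proceeds by the same template after adjusting index bookkeeping at the unique jump step $k$: for $i < k$, the polynomial $f_i$ has $d_0$ zeros, while for $j \geq k$ it has $d_0 + 1$, and one checks $f_i \preceq f_j$ in each of the three position regimes $\{i,j < k\}$, $\{i,j \geq k\}$, and $\{i < k \leq j\}$ separately. In every such case, each required inequality between zeros is supplied either directly by a monotone chain of consecutive interlacings, or by the triangle $j \to m \to 0 \to i$ which feeds through the single wrap-around inequality from $f_0 \preceq f_m$. I expect the main obstacle to be purely notational: verifying that at the unique degree jump, the root indices still line up correctly so that the wrap-around inequality closes each required chain. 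Conceptually no ingredient beyond the consecutive interlacings and the single end-relation $f_0 \preceq f_m$ is ever used.
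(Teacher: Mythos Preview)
The paper does not supply its own proof of this lemma; it is quoted verbatim from \cite{branden-polya} and used as a black box. So there is no in-paper argument to compare against.

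Your argument is correct. The equal-degree case is complete as written: the monotone chain yields $s_\ell^{(i)}\le s_\ell^{(j)}$, and the single wrap-around inequality $s_\ell^{(m)}\le s_{\ell-1}^{(0)}$ from $f_0\preceq f_m$ closes $s_\ell^{(j)}\le s_{\ell-1}^{(i)}$. In the mixed-degree case your sketch is right but deserves to be written out rather than deferred: at the unique jump step $k$ the relation $f_{k-1}\preceq f_k$ reads $r_{\ell+1}^{(k)}\le r_\ell^{(k-1)}\le r_\ell^{(k)}$, so the $\ell$-th root remains monotone across the jump, and the wrap-around becomes $r_{\ell+1}^{(m)}\le r_\ell^{(0)}$. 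With these two facts the three regimes $i<j<k$, $k\le i<j$, and $i<k\le j$ each reduce to the same chain-plus-wrap pattern you used in the equal-degree case. There is no missing idea, only the index bookkeeping you already flagged.
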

A sequence of polynomials as in Lemma \ref{lem:endsinterlacing} is called an \emph{interlacing sequence}. 

We are now ready to prove our main result of this section.
\begin{proof}[Proof of Theorem~\ref{thm:interlacingsymdec}]
    We prove the equivalent statement for $f$-polynomials. We may thus assume that we have $f$-polynomials $f_1,f_2$ of degrees $d_1$ and $d_2$ with magic positive $\mathscr{R}_{d_1}$- and $\mathscr{R}_{d_2}$-decompositions $(\tilde{a}_i,\tilde{b}_i)$ , $i=1,2$, and $\tilde{b}_i\preceq \tilde{a}_i$. Let $(\tilde{c},\tilde{d})$ be the $\mathscr{R}_{d_1+d_2}$-decomposition of $f_1\diamond f_2$. We will show that $\mathscr{R}(f_1\diamond f_2)\preceq f_1\diamond f_2$.

    Since $\tilde{b}_i\preceq \tilde{a}_i$, $\deg \tilde{b}_i=\deg \tilde{a}_i -1$, and the zeros of $\tilde{b}_i,\tilde{a}_i$ lie in $[-1,0]$, for $i=1,2$, by Theorem~\ref{thm:diamondinterlacing} we have the following chain of interlacing polynomials.
    \[
    ((x+1)\tilde{b}_1)\diamond((x+1)\tilde{b}_2)\preceq ((x+1)\tilde{b}_1)\diamond\tilde{a}_2 \preceq \tilde{a}_1\diamond\tilde{a}_2\preceq (x\tilde{b}_1)\diamond\tilde{a}_2\preceq (x\tilde{b}_1)\diamond (x\tilde{b}_2)
    \]
    Observe that ends are interlacing by Lemma~\ref{lem:defect1}. By Lemma~\ref{lem:endsinterlacing} the above sequence is interlacing. 
    
        Analogously, the sequence 
        \[
    ((x+1)\tilde{b}_1)\diamond((x+1)\tilde{b}_2)\preceq \tilde{a}_1\diamond((x+1)\tilde{b}_2) \preceq \tilde{a}_1\diamond\tilde{a}_2\preceq \tilde{a}_1\diamond(x\tilde{b}_2)\preceq (x\tilde{b}_1)\diamond (x\tilde{b}_2)
    \]
    is interlacing. In particular $\tilde{a}_1\diamond \tilde{a}_2\preceq (x\tilde{b}_1)\diamond (x\tilde{b}_2)$.
    By Lemma~\ref{lem:basicinterlacing}, 
    \begin{equation}\label{eq:interlacingright}
            \tilde{a}_1\diamond \tilde{a}_2\preceq f_1 \diamond f_2=\tilde{a}_1\diamond \tilde{a}_2+\tilde{a}_1\diamond(x\tilde{b}_2)+(x\tilde{b}_1)\diamond\tilde{a}_2+(x\tilde{b}_1)\diamond (x\tilde{b}_2) \preceq (x\tilde{b}_1)\diamond (x\tilde{b}_2)
    \end{equation}
    Further, observe that
    \[
    \mathscr{R}(f_1\diamond f_2) = \mathscr{R}(f_1)\diamond \mathscr{R}(f_2) = (\tilde{a}_1+(x+1)\tilde{b_1})\diamond (\tilde{a}_2+(x+1)\tilde{b}_2) \, .
    \]
By using the interlacing sequences above and applying Lemma~\ref{lem:basicinterlacing} we deduce 
    \begin{equation}    \label{eq:interlacingleft}
            ((x+1)\tilde{b}_1)\diamond((x+1)\tilde{b}_2)\preceq \mathscr{R}(f_1\diamond f_2) \preceq    \tilde{a}_1\diamond \tilde{a}_2 \, .
    \end{equation}
    Combining~\eqref{eq:interlacingright} and~\eqref{eq:interlacingleft} together with Lemma~\ref{lem:endsinterlacing} yields $\mathscr{R}(f_1\diamond f_2)\preceq f_1\diamond f_2$ as claimed.
\end{proof}

\begin{example}\label{example:real-rootedness-sym-fails}
    It is not true that if one just requires the symmetric decompositions of $\mathscr{W}(p)$ and $\mathscr{W}(q)$ to be real-rooted (but not interlacing), then $\mathscr{W}(pq)$ is real-rooted. For example, fix any integer $\lambda$ and consider the polynomial $h(x) = x^3+(3+\lambda)x^2+3x+1$. Its symmetric decomposition is given by $a(x)=1+3x+3x^2+x^3$ and $b(x) = \lambda x$, both real-rooted polynomials (however, notice that $h(x)$ itself is not real-rooted whenever $\lambda\neq 0$). Let us call $p(x)$ the only cubic polynomial such that $(\mathscr{W}(p))(x) = h(x)$. For $\lambda=6$, one may compute $(\mathscr{W}(p^2))(x) = x^{6} + 162 \, x^{5} + 1239 \, x^{4} + 1836 \, x^{3} + 639 \, x^{2} + 42 \, x + 1$, which is not real-rooted and whose symmetric decomposition is not real-rooted.
\end{example}

If the $\mathcal{I}_d$-decomposition $(a,b)$ of a polynomial $h$ satisfies that both $a$ and $b$ are $\gamma$-positive, we say that $h$ has a $\gamma$-positive symmetric decomposition. We can use the previous result to show the preservation of the $\gamma$-positivity of symmetric decompositions under Hadamard products.

\begin{proposition}\label{prop:gamma-symmetric}
 Let $\mathscr{W}(p)$ and $\mathscr{W}(q)$ have $\gamma$-positive symmetric decompositions. Then so does $\mathscr{W}(pq)$.   
\end{proposition}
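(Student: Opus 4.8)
The plan is to translate the statement, via $f$-polynomials, into a claim about a positivity notion adapted to the $\gamma$-basis, and then to follow the computation carried out in the proof of Proposition~\ref{prop:nonnegativesymdecomp}. For an integer $n\geq\deg g$, call a polynomial $g$ \emph{$\Omega$-positive of order $n$} if it lies in the nonnegative cone generated by $\{(x(x+1))^k(2x+1)^{n-2k}:0\leq k\leq\lfloor n/2\rfloor\}$. A direct computation with $f(h;x)=(1+x)^nh\!\left(\tfrac{x}{1+x}\right)$, in the spirit of Lemma~\ref{lemma:gamma}, shows that the $f$-polynomial of $x^k(1+x)^{n-2k}$ with respect to $n$ is exactly $(x(x+1))^k(2x+1)^{n-2k}$; combining this with Lemma~\ref{lem:IdRd} yields the dictionary I need: a polynomial $h=\mathscr{W}(p)$ of degree at most $d$ has a $\gamma$-positive $\mathcal{I}_d$-decomposition if and only if, writing $(\tilde a,\tilde b)$ for the $\mathscr{R}_d$-decomposition of its $f$-polynomial, $\tilde a$ is $\Omega$-positive of order $d$ and $\tilde b$ is $\Omega$-positive of order $d-1$.

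Next I would record the closure properties of $\Omega$-positivity needed below: sums of $\Omega$-positive polynomials of a fixed order $n$ are $\Omega$-positive of order $n$; products of $\Omega$-positive polynomials of orders $n_1,n_2$ are $\Omega$-positive of order $n_1+n_2$ (in particular multiplying by $x(x+1)$ raises the order by $2$); the derivative of an $\Omega$-positive polynomial of order $n$ is $\Omega$-positive of order $n-1$; and, crucially, the diamond product of $\Omega$-positive polynomials of orders $n_1,n_2$ is $\Omega$-positive of order $n_1+n_2$. The first three follow immediately from the behaviour of the generating basis under these operations. For the diamond product one reduces by bilinearity to basis elements: $(x(x+1))^k(2x+1)^{n_1-2k}$ is the $f$-polynomial, with respect to $n_1$, of $x^k(1+x)^{n_1-2k}=\mathscr{W}(P_1)$ for a polynomial $P_1$ of degree $n_1$ and defect $0$ (and likewise for $P_2$), and these are $\gamma$-positive; by~\eqref{eq:diamond-e} the diamond product of the two basis elements equals $\mathscr{E}(P_1P_2)$, and Theorem~\ref{thm:gamma-positivity} shows $\mathscr{W}(P_1P_2)$ is symmetric of defect $0$ and $\gamma$-positive, so $\mathscr{E}(P_1P_2)$ is $\Omega$-positive of order $n_1+n_2$. (In particular $\Omega$-positivity refines magic positivity, so this also reproves Proposition~\ref{prop:nonnegativesymdecomp}.)

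With the dictionary and closures in place I would run the argument on $f$-polynomials. Let $f_i=\mathscr{E}(p_i)$ with $\mathscr{R}_{d_i}$-decomposition $(\tilde a_i,\tilde b_i)$; by hypothesis, $\tilde a_i$ is $\Omega$-positive of order $d_i$ and $\tilde b_i$ of order $d_i-1$. By~\eqref{eq:diamond-e} the $\mathscr{R}_{d_1+d_2}$-decomposition $(\tilde c,\tilde d)$ of $f_1\diamond f_2$ corresponds to the $\mathcal{I}_{d_1+d_2}$-decomposition of $\mathscr{W}(pq)$, so it suffices to show $\tilde c$ is $\Omega$-positive of order $d_1+d_2$ and $\tilde d$ of order $d_1+d_2-1$. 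The proof of Proposition~\ref{prop:nonnegativesymdecomp} gives $\tilde d=\tilde b_1\diamond\tilde a_2+\tilde a_1\diamond\tilde b_2+\ell$, and the first two summands are $\Omega$-positive of order $d_1+d_2-1$ by the diamond closure; for $\ell$ I would observe that $(x+1)\tilde b_i$ is precisely the $f$-polynomial of $b_i$ taken with respect to $d_i$, that is, $b_i$ regarded as a symmetric polynomial of defect $1$ — which is moreover $\gamma$-positive — so that the $\gamma$-positivity part of Theorem~\ref{thm:gamma-positivity} (the two defects both equal $1$) together with Lemma~\ref{lem:defect1} identifies $\ell$ as the $f$-polynomial of a $\gamma$-positive symmetric polynomial of center $(d_1+d_2-1)/2$, hence $\Omega$-positive of order $d_1+d_2-1$. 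Likewise, the proof of Proposition~\ref{prop:nonnegativesymdecomp} writes $\tilde c$ as $\tilde a_1\diamond\tilde a_2$ plus two correction sums of the form $\sum_{k\geq1}\tfrac{1}{k!(k-1)!}\tilde a_1^{(k)}\tilde b_2^{(k-1)}x^k(x+1)^k$ and its mirror, plus a term that vanishes; the leading summand is $\Omega$-positive of order $d_1+d_2$ by the diamond closure, and each term of the correction sums is $\Omega$-positive of order $(d_1-k)+(d_2-k)+2k=d_1+d_2$ by the derivative, product, and sum closures. Thus $\tilde c$ and $\tilde d$ have the required orders, which by the dictionary proves the proposition.

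The main obstacle I anticipate is the term $\ell$: one must recognize that multiplying the $f$-polynomial of a symmetric polynomial by $(1+x)$ corresponds exactly to raising its defect by one, so that the two factors feeding into Lemma~\ref{lem:defect1} share the common defect $1$ and the (genuinely nontrivial) $\gamma$-positivity conclusion of Theorem~\ref{thm:gamma-positivity} becomes applicable. Once the $\Omega$-positivity formalism and its closure properties are established, all remaining steps are bookkeeping layered on top of the computation already performed in Proposition~\ref{prop:nonnegativesymdecomp}.
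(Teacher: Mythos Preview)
Your argument is correct and takes a genuinely different route from the paper's.

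The paper's proof is considerably shorter: it observes that any $h$ with a $\gamma$-positive $\mathcal{I}_d$-decomposition is a nonnegative combination of the polynomials $x^i(1+x)^{d-2i}$ and $x\cdot x^k(1+x)^{d-1-2k}$; by bilinearity one reduces to the case where $\mathscr{W}(p)$ and $\mathscr{W}(q)$ are each a single such basis element. In that case the symmetric decomposition is trivially nonnegative and interlacing, so Theorem~\ref{thm:interlacingsymdec} applies and yields an interlacing (hence real-rooted, hence $\gamma$-positive) symmetric decomposition of $\mathscr{W}(pq)$.

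Your approach avoids Theorem~\ref{thm:interlacingsymdec} entirely: you set up a refined positivity notion ($\Omega$-positivity) on the $f$-side, establish its closure under derivative, product, sum, and diamond, and then rerun the calculation of Proposition~\ref{prop:nonnegativesymdecomp} with $\Omega$-positivity in place of magic positivity, invoking Theorem~\ref{thm:gamma-positivity} (at defect $1$) to handle the term $\ell$. This is longer, but it makes Proposition~\ref{prop:gamma-symmetric} logically independent of the interlacing machinery; in the paper's organization, the $\gamma$-positivity part of Theorem~\ref{thm:nonnegativesymdec} inherits a dependence on Theorem~\ref{thm:interlacingsymdec}, which your route eliminates.

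Two minor remarks. First, the diamond closure of $\Omega$-positivity already follows directly from the derivative, product and sum closures via the defining formula~\eqref{eq:diamond-definition}, since $x^j(x+1)^j=(x(x+1))^j$ is itself an $\Omega$-basis element; the detour through Theorem~\ref{thm:gamma-positivity} at defect $0$ is unnecessary there (though you still need the defect-$1$ case for $\ell$). Second, your parenthetical that this ``also reproves Proposition~\ref{prop:nonnegativesymdecomp}'' is not quite right: you recover it only under the stronger hypothesis that the symmetric decompositions are $\gamma$-positive, not merely nonnegative.
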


\begin{proof}
    Every polynomial $h$ admitting a $\gamma$-positive $\mathcal{I}_d$-symmetric decomposition can be written uniquely as a nonnegative linear combination of polynomials of the form
    $x^i(x+1)^{d-2i}$ and $x\cdot x^k (x+1)^{d-1-2k}$. By bilinearity it thus suffices to proof the claim if both $\mathscr{W}(p)$ and $\mathscr{W}(q)$ are given by such basis elements. In this case both $\mathscr{W}(p)$ and $\mathscr{W}(q)$ trivially have a nonnegative and interlacing decomposition. Thus by the previous theorem the same is true for $\mathscr{W}(pq)$. In particular, both parts of the symmetric decomposition of $\mathscr{W}(pq)$ are symmetric and real-rooted polynomials, and thus are $\gamma$-positive.
\end{proof}

\begin{remark}
    We point out that the preceding result does not imply Theorem~\ref{thm:gamma-positivity} under the assumption that $\mathscr{W}(p)$ and $\mathscr{W}(q)$ are themselves symmetric polynomials. Notice that in Theorem~\ref{thm:gamma-positivity} we considered polynomials with \emph{arbitrary} center of symmetry, whereas for Proposition~\ref{prop:gamma-symmetric} we require that the center of symmetry equals $d/2$.
\end{remark}
The preceeding result also completes the proof of Theorem~\ref{thm:nonnegativesymdec}.
\begin{proof}[Proof of Theorem~\ref{thm:nonnegativesymdec}]
    The result follows from combining Propositions~\ref{prop:nonnegativesymdecomp} and~\ref{prop:gamma-symmetric}.
\end{proof}

\section{Hadamard powers and their behavior in the limit}\label{sec:Hadamardpowers}

\noindent The goal of this section is to disprove Conjecture~\ref{conj:fischer-kubitzke} posed by Fischer and Kubitzke \cite{fischer-kubitzke}. We will disprove it in a strong sense by showing that even weaker statements on log-concavity fail to be true.

It follows from the Newton inequalities that any polynomial having only nonpositive real zeros is log-concave. Furthermore, given $h=\mathscr{W}(p)$ and $f=\mathscr{E}(p)$ for some polynomial $p\in \mathbb{R}[x]$ such that $h$ has only nonnegative, log-concave coefficients without internal zeros, it follows that also $f$ is log-concave without internal zeros by a result of Brenti~\cite[Theorem~2.5.8]{brenti}. We will use this implication to give a counterexample to Conjecture~\ref{conj:fischer-kubitzke} by considering the $f$-polynomial corresponding to the Ehrhart polynomial of iterated products of a certain polytope.

Recall that for every lattice polytope $P$, the $k$-fold Cartesian product of $P\subseteq \mathbb{R}^d$ with itself is
    \[P^k := \underbrace{P\times \cdots \times P}_{\text{$k$ times}} = \{(p_1,\ldots, p_k)\colon p_i \in P\}\subseteq \left(\mathbb{R}^d\right)^k \, .\]

    Clearly, for all integers $n\geq 0$, one has $|nP^k \cap \mathbb{Z}^{dk}|=|nP \cap \mathbb{Z}^{d}|^k$. Therefore, $E_{P^k}(x) = E_P(x)^k$ and, consequentially, by equation~\eqref{eq:diamond-e},
    \[ f_{P^k}(x) = f^{\diamond k}_P(x) := (\underbrace{f_P \diamond\cdots \diamond f_P}_{\text{$k$ times}})(x),\]
where $E_P(x)$ denotes the Ehrhart polynomial and $f_P = \mathscr{E}(E_P)$ the corresponding $f$-polynomial of the polytope $P$.

We consider the $3$-dimensional Reeve tetrahedron with vertices $(0,0,0)$, $(1,0,0)$, $(0,1,0)$ and $(1,7,8)$ and show that the $f$-polynomial of the Ehrhart polynomial of all powers of it fail to be log-concave, hence refuting Conjecture~\ref{conj:fischer-kubitzke}. 

\begin{theorem}\label{thm:Ehrhartcounterexample}
    Let $P\subseteq \mathbb{R}^3$ be the $3$-dimensional Reeve tetrahedron with vertices $(0,0,0)$, $(1,0,0)$, $(0,1,0)$ and $(1,7,8)$. Let $f_{P^k}(x) = f_{k,0} + f_{k,1} x +\cdots + f_{k,3k}x^{3k}$. Then
        \[ f_{k,1}^2 < f_{k,0} f_{k,2}\]
        for all $k\geq 1$.
    In particular, for all $k\geq 1$, the $h^\ast$-polynomial of the $k$-fold product of $P$ with itself fails to be log-concave and, consequentially, is not real-rooted.
\end{theorem}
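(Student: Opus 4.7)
The plan is to verify the sharp inequality at the level of the $f$-polynomial (where the diamond product has a concrete explicit formula) and then transfer the failure of log-concavity to $h^\ast_{P^k}$ via the relation $f(x) = (1+x)^d h(x/(1+x))$. First I would compute the Ehrhart polynomial of $P$ by a standard volume and Ehrhart-reciprocity calculation: $P$ has normalized volume $8$, contains only its four vertices as lattice points, and has empty relative interior, so
\[
E_P(t) = \tfrac{4}{3}t^3 + t^2 + \tfrac{2}{3}t + 1.
\]
Re-expanding $E_P$ in the basis $\{\binom{t}{i}\}_{i=0}^3$ and applying $\mathscr{E}$ then yields $f_P(x) = 1 + 3x + 10x^2 + 8x^3$.

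The key step is to understand how the first few coefficients of $f_{P^k}$ behave under iteration. Using $f_{P^{k+1}} = f_{P^k}\diamond f_P$ together with formula~\eqref{eq:diamond-definition} and expanding around $x=0$, I would derive the recursions
\begin{align*}
f_{k+1,0} &= f_{k,0},\\
f_{k+1,1} &= (1+\alpha)\,f_{k,1} + \alpha\,f_{k,0},\\
f_{k+1,2} &= (1+2\alpha+\beta)\,f_{k,2} + 2(\alpha+\beta)\,f_{k,1} + \beta\,f_{k,0},
\end{align*}
where $(\alpha,\beta)=(3,10)$ are the corresponding coefficients of $f_P$. Starting from $f_{0,0}=1$ and $f_{0,1}=f_{0,2}=0$, solving these linear recursions produces the closed forms $f_{k,0}=1$, $f_{k,1}=4^k-1$, and $f_{k,2}=17^k-2\cdot 4^k + 1$. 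A direct computation then yields
\[
f_{k,0}\,f_{k,2} - f_{k,1}^2 \ = \ 17^k - 16^k \ > \ 0 \qquad \text{for every } k\geq 1,
\]
which is the main inequality of the theorem.

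To finish I would transfer this failure of log-concavity to $h^\ast_{P^k}$. For $k=1$ one checks directly that $h^\ast_P = 1+7x^2$ has an internal zero at degree $1$ and so fails log-concavity. For $k\geq 2$ I would either invoke the Brenti result cited above, or argue directly by expanding $f = \sum_i h_i\, x^i(x+1)^{d-i}$: the identities $f_0 = h_0$, $f_1 = dh_0 + h_1$, and $f_2 = \binom{d}{2}h_0 + (d-1)h_1 + h_2$ give
\[
f_0 f_2 - f_1^2 \ = \ (h_0 h_2 - h_1^2) - (d+1)h_0 h_1 - \tbinom{d+1}{2}h_0^2,
\]
so that $h_0,h_1\geq 0$ together with $h_1^2 \geq h_0 h_2$ force $f_0 f_2 \leq f_1^2$. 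Since $h^\ast_{P^k}$ has nonnegative coefficients by Stanley's theorem, the strict inequality $f_{k,1}^2 < f_{k,0}f_{k,2}$ immediately yields $h_{k,1}^2 < h_{k,0}h_{k,2}$, so $h^\ast_{P^k}$ fails log-concavity and \emph{a fortiori} cannot be real-rooted. The only technical hurdle I anticipate is solving the non-autonomous recursion for $f_{k,2}$---which involves $f_{k,1}$ as a forcing term and requires particular solutions for both the $4^k$ and the constant pieces---and checking that the $17^k$ eigenvalue strictly dominates $(4^k-1)^2 = 16^k - 2\cdot 4^k + 1$ for every $k\geq 1$; the clean exact cancellation to $17^k - 16^k$ is precisely what makes the example work uniformly in $k$.
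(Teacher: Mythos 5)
Your proposal is correct and follows essentially the same route as the paper: the same $f_P = 1 + 3x + 10x^2 + 8x^3$, the same recursions $f_{k+1,0}=f_{k,0}$, $f_{k+1,1} = 3f_{k,0}+4f_{k,1}$, $f_{k+1,2} = 10f_{k,0}+26f_{k,1}+17f_{k,2}$ obtained from the diamond product, the same closed forms $1$, $4^k-1$, $17^k-2\cdot 4^k+1$, and the comparison $16^k<17^k$. The only (harmless) deviations are that you recompute $E_P$ via volume, lattice-point count and reciprocity instead of quoting $h^\ast_P = 1+7x^2$, and that your explicit identity $f_0f_2 - f_1^2 = (h_0h_2-h_1^2) - (d+1)h_0h_1 - \binom{d+1}{2}h_0^2$ gives a self-contained transfer of the failure of log-concavity back to $h^\ast_{P^k}$, where the paper instead invokes Brenti's result.
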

\begin{proof}
    The $h^\ast$-polynomial of this tetrahedron is known to be 
    \[
    h^\ast _P (x) \ = \ 1+7x^2,
    \]
    see, e.g., \cite[Example~2.4]{ferroni-higashitani}.
    It follows that the corresponding $f$-polynomial equals
    \[
        f_P(t) \ = \ (x+1)^3h^\ast _P \left(\frac{x}{x+1}\right) \ = \  8x^3 + 10x^2 + 3x+1 \, .
    \]
    By definition, we can calculate $f_P^{\diamond (k+1)}(x)$ as follows:
    \begin{align*}
    (f_P \diamond f_P^{\diamond k})(x) & =  \sum_{i=0}^3 \frac{f_P^{(i)}(x)}{i!}\frac{(f_P^{\diamond k})^{(i)}(x)}{i!}\, x^i\, (x+1)^i \\
    &= (1+3x+10x^2+8x^3)\left(\sum_{\ell \geq 0}f_{k,\ell}\,x^\ell\right)\\
    &\enspace + (3+20x+24x^2)\left(\sum _{\ell \geq 0}(\ell+1)f_{k,\ell +1}\, x^\ell\right)\,x\,(x+1)\\
    &\enspace + \frac{1}{4}(20+48x)\left(\sum _{\ell \geq 0}(\ell+2)(\ell +1)f_{k,\ell +2}\, x^\ell\right)\, x^2\, (x+1)^2\\
    &\enspace +\frac{1}{36}\cdot 48 \cdot \left(\sum _{\ell \geq 0}(\ell+3)(\ell+2)(\ell +1)f_{k,\ell +3}\, x^\ell \right)\, x^3\, (x+1)^3  .
    \end{align*}
    Comparing the first three coefficients on both sides, we obtain the following recursion:
    \begin{align*}
        f_{k+1,0} & =  f_{k,0}, \\
        f_{k+1,1} & =  3f_{k,0}+4f_{k,1},\\
        f_{k+1,2} & =  10f_{k,0}+26f_{k,1} + 17 f_{k,2} \, .
    \end{align*}
    This allows us to derive a closed formula for these coefficients by induction: 
    \[
        f_{k,0} = 1, \qquad
        f_{k,1} = 4^k-1, \qquad \text{ and } \enspace
        f_{k,2} = 17^k-2\cdot 4^k+1 \, .
    \]
    For $f_{k,0}$ the proof is immediate. For the linear term $f_{k,1}$, notice that when $k=1$, we have $f_{1,1} = 3 = 4^1-1$, and the inductive step is verified by $f_{k+1,1} = 3f_{k,0}+4f_{k,1} = 3 + 4(4^k-1) = 4^{k+1}-1$. For the quadratic term $f_{k,2}$, when $k=1$ we have $f_{1,2} = 10 = 17^1-2\cdot 4^1 + 1$, and the induction follows from
    \begin{align*}
        f_{k+1,2}  &= 10 f_{k,0}+26f_{k,1} + 17 f_{k,2}\\
     &= 10+26 (4^k-1) + 17 (17^k-2 \cdot 4^k+1)\\
     &= 17^{k+1}-8\cdot 4^k+1\\
     &= 17^{k+1}-2\cdot 4^{k+1}+1 \, .
    \end{align*}
    Thus, we have established the first three coefficients. Observe that this implies that
        \[ f_{k,1}^2 = (4^k-1)^2 = 16^k - 2\cdot 4^k + 1 < 17^k - 2\cdot 4^k +1 = f_{k,2}f_{k,0}, \]
    and therefore none of the polynomials $f_{P^k}(x)$ for $k\geq 1$ is log-concave. Consequentially, by the discussion above, we conclude that the $h^*$-polynomials of the polytopes $P^k$ for $k\geq 1$ fail to be log-concave and thus also fail to be real-rooted.
\end{proof}

\begin{proof}[Proof of Theorem~\ref{thm:counterexample}]
    The example given in Theorem~\ref{thm:Ehrhartcounterexample} constitutes a counterexample to Conjecture~\ref{conj:fischer-kubitzke}.
\end{proof}

\section{Final remarks}\label{sec:Final remarks}

\noindent One outstanding question that remains unsolved is the following.

\begin{question}\label{question:log-concavity}
    Assume that $\mathscr{W}(p)$ and $\mathscr{W}(q)$ are polynomials with log-concave coefficients and no internal zeros. Does the same property hold for $\mathscr{W}(pq)$?
\end{question}

This question was considered before by Ferroni and Higashitani \cite[Question~3.4(i)]{ferroni-higashitani} in the context of Ehrhart polynomials. By an example provided in that article \cite[Example~3.5]{ferroni-higashitani}, it is known that if $\mathscr{W}(p)$ and $\mathscr{W}(q)$ are nonnegative and unimodal, it is not necessarily true that $\mathscr{W}(pq)$ is unimodal too. Recently, Balletti \cite{balletti} showed that counterexamples coming from Ehrhart polynomials of lattice polytopes also exist in very large dimensions.

Consider the linear operator $N:\mathbb{R}[x_1,\ldots,x_n]\to \mathbb{R}[x_1,\ldots,x_n]$ defined on monomials by $x^{\alpha} \mapsto x^{\alpha}/\alpha!$, where $\alpha! := \alpha_1 !\cdots \alpha_n!$. A homogeneous polynomial $f\in \H_n^d$ is said to be \emph{denormalized Lorentzian} if the polynomial $N(f)$ is Lorentzian. Notice that we have the following equivalence:
    \begin{align*}
        p\in \H_2^n \text{ is denormalized Lorentzian}  \iff & p(x,1) \text{ is log-concave without internal zeros.}
    \end{align*}

The class of denormalized Lorentzian polynomials is closed under products, and every Lorentzian polynomial is denormalized Lorentzian.

One can prove that the operation described in Lemma~\ref{transf} preserves the property of being denormalized Lorentzian. Using the notation of Lemma~\ref{form}, it follows that if $p$ is Lorentzian and $q$ is denormalized Lorentzian, then $p\bullet q$ is denormalized Lorentzian. In particular, the same strategy employed in the proof of Theorem~\ref{thm:ultra-log-concavity} can be used to show the following statement, which offers some evidence towards a positive answer to Question~\ref{question:log-concavity}.

\begin{proposition}
    If $\mathscr{W}(p)\in \ULC(\deg p)$ and $\mathscr{W}(q)$ is log-concave with no internal zeros, then $\mathscr{W}(pq)$ is log-concave with no internal zeros.
\end{proposition}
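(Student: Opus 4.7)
The plan is to mirror the proof of Theorem~\ref{thm:ultra-log-concavity} in the denormalized Lorentzian category, keeping the factor coming from $\mathscr{W}(p)$ in the Lorentzian class but allowing the factor coming from $\mathscr{W}(q)$ only to be denormalized Lorentzian. Setting $P = \mathcal{H}(\mathscr{W}(p))$ and $Q = \mathcal{H}(\mathscr{W}(q))$, the hypotheses translate, via the bivariate characterizations recalled in Section~\ref{sec:ultra-log-concavity} and immediately before the proposition, to $P$ being Lorentzian and $Q$ being denormalized Lorentzian; by identity~\eqref{eq:homog}, the conclusion is equivalent to the assertion that $P \bullet Q$ is denormalized Lorentzian.

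To establish this bivariate preservation I would retrace the proof of Theorem~\ref{lorpres} step by step. By Lemma~\ref{form}, $P \bullet Q$ is the coefficient of $z^{2b}$ in
\[
r(x,y,z) = P(x+z, y+z)\,(x+z)^b (y+z)^b\, Q\bigl(\tfrac{xz}{x+z},\tfrac{yz}{y+z}\bigr),
\]
so it suffices to show that $r(x,y,z)$ is denormalized Lorentzian, since coefficients of denormalized Lorentzian polynomials are easily checked to remain denormalized Lorentzian (this follows from the analogous fact for Lorentzian polynomials after commuting past the normalizing operator $N$). The factor $P(x+z,y+z)$ is Lorentzian, and hence denormalized Lorentzian, by Theorem~\ref{thm:products-and-changes-of-var}\ref{it:positive-change-var}. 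For the second factor one applies the denormalized Lorentzian version of Lemma~\ref{transf} twice---introducing two auxiliary variables---and then diagonalizes by setting both of them equal to $z$; the diagonalization is a nonnegative linear substitution and preserves denormalized Lorentzianness. The product of the two factors is denormalized Lorentzian because this class is closed under products.

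The only nonroutine input, which the paper isolates in the paragraph preceding the statement, is that the operation described in Lemma~\ref{transf} preserves the denormalized Lorentzian property. Granting that, every other ingredient---nonnegative change of variables, coefficient extraction, diagonalization, and multiplication---carries over verbatim from the proof of Theorem~\ref{lorpres}, using additionally only that every Lorentzian polynomial is denormalized Lorentzian. Thus the proof is an essentially formal reprise of the Lorentzian argument with the hypothesis on the second factor relaxed.
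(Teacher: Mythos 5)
Your overall route is the same as the paper's (itself only sketched) argument: via~\eqref{eq:homog} the claim becomes that $P\bullet Q$ is denormalized Lorentzian when $P$ is Lorentzian and $Q$ is denormalized Lorentzian, and you then rerun the proof of Theorem~\ref{lorpres}, taking a denormalized-Lorentzian version of Lemma~\ref{transf} as the granted key input. The translation of the hypotheses into bivariate homogeneous form, the coefficient extraction from $r(x,y,z)$, the treatment of the factor $P(x+z,y+z)$ via Lorentzianity of $P$, and the appeal to closure of the denormalized class under products are all fine.

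The gap is in the diagonalization step. You justify setting the two auxiliary variables equal to $z$ by saying that this ``is a nonnegative linear substitution and preserves denormalized Lorentzianness.'' That principle is false, and the paper refutes it in the paragraph immediately following the proposition: $x^2+5xy+25y^2$ is denormalized Lorentzian, yet the nonnegative substitution $x\mapsto x+z$, $y\mapsto y+z$ destroys the property; this failure is exactly why the authors say their argument does not extend to two log-concave factors (Question~\ref{question:log-concavity}). Theorem~\ref{thm:products-and-changes-of-var} covers Lorentzian and stable polynomials only, so passing from $(x_1+z_1)^b(x_2+z_2)^b\,q\bigl(\tfrac{x_1z_1}{x_1+z_1},\tfrac{x_2z_2}{x_2+z_2}\bigr)$ to the corresponding factor of $r(x,y,z)$ by setting $z_1=z_2=z$ is not covered by any closure property stated in the paper. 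To repair this you would need either a proof that identifying variables (as opposed to general nonnegative substitutions) preserves the denormalized Lorentzian class, or, better, to formulate and prove the key lemma directly for the combined linear operator $q\mapsto (x+z)^b(y+z)^b\,q\bigl(\tfrac{xz}{x+z},\tfrac{yz}{y+z}\bigr)$, which is how the paper's assertion about ``the operation described in Lemma~\ref{transf}'' has to be understood if the strategy of Theorem~\ref{thm:ultra-log-concavity} is to carry over. So the denormalized transfer lemma is not the only nonroutine input; as written, your argument is incomplete at precisely the point the paper flags as delicate.
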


Notice, however, that a necessary step in the proof of Theorem~\ref{thm:ultra-log-concavity} was that if $p(x,y)$ is Lorentzian, then $p(x+z,y+z)$ is Lorentzian. An analogous statement does not hold for denormalized Lorentzian polynomials. For example, the polynomial $p(x,y) = x^2+5xy+25y^2$ is denormalized Lorentzian, but $p(x+z,y+z)$ is not denormalized Lorentzian. In particular, there is no straightforward modification of our argument yielding a positive answer to Question~\ref{question:log-concavity}. On the other hand, it would also be of interest to prove that the operator defined in Lemma~\ref{lemma:gamma} preserves the denormalized Lorentzian property. A proof of that fact would imply that if $\gamma_f$ is log-concave with no internal zeros, then $f$ is log-concave with no internal zeros.\footnote{After the posting of the present paper on the arXiv, Ferroni, Panova, and Venturello \cite{ferroni-panova-venturello} proved this implication.}\\

\emph{Acknowledgements:} PB is a Wallenberg Academy Scholar supported by the Knut and Alice Wallenberg Foundation, and the G\"oran Gustafsson foundation. LF was partially supported by grant 2018-03968 of the Swedish Research Council, and by the Minerva Research Foundation at the Institute for Advanced Study. KJ was partially supported by the Wallenberg AI, Autonomous Systems and Software Program funded by the Knut and Alice Wallenberg Foundation as well as grants 2018-03968 and 2023-04063 of the Swedish Research Council, and the Verg Foundation.

\bibliographystyle{amsalpha}
\bibliography{bibliography}

\end{document}